\documentclass[openany, twoside]{amsart}
\usepackage[utf8]{inputenc}
\usepackage[english]{babel}
\usepackage{amsfonts, amssymb, amsmath, amsthm}
\usepackage{graphicx}
\usepackage{color}
\usepackage{setspace}
\usepackage{hyperref}
\usepackage{float,comment}
\usepackage{comment}
\numberwithin{equation}{section}
\usepackage[pagewise]{lineno}

\setlength{\oddsidemargin}{.2in}
\setlength{\evensidemargin}{.2in}
\setlength{\textwidth}{6.2in}

\newtheorem{teor}{Theorem}[section]
\newtheorem{obs}[teor]{Remark}
\newtheorem{lema}[teor]{Lemma}

\newtheorem{cor}[teor]{Corollary}
\newtheorem{prop}[teor]{Proposition}

\theoremstyle{definition}

\newtheorem{Not}[teor]{Notation}

\DeclareMathOperator{\Id}{Id}

\title[Bulk-boundary Bilaplacian eigenvalues]{Bulk-boundary eigenvalues for Bilaplacian problems}

\author[D.\ Buoso]{Davide Buoso }
\thanks{D.\ Buoso, Dipartimento di Scienze e Innovazione Tecnologica, Universit\`a degli Studi del Piemonte Orientale ``A.\ Avogadro'', viale Teresa Michel 11, 15121 Alessandria, Italy}
\email[D.\ Buoso]{davide.buoso@uniupo.it}

\author[C.\ Falcó]{Carles Falcó}
\thanks{C.\ Falcó, Mathematical Institute, University of Oxford, OX2 6GG Oxford, United Kingdom}
\email[C.\ Falcó]{falcoigandia@maths.ox.ac.uk}

\author[M.d.M.\ González]{María del Mar González}
\thanks{M.d.M.\ González, Departamento de Matemáticas, Universidad Autónoma de Madrid and ICMAT, Madrid 28049, Spain}
\email[M.d.M.\ González]{mariamar.gonzalezn@uam.es}

\author[M.\ Miranda]{Manuel Miranda}
\thanks{M.\ Miranda, Institute of Cross-Disciplinary Physics and Complex Systems, IFISC (UIB-CSIC), 07122 Palma de Mallorca, Spain}
\email[M.\ Miranda]{mmiranda@ifisc.uib.es}

\begin{document}
\maketitle

\begin{abstract}
We initiate the study of a bulk-boundary eigenvalue problem for the Bilaplacian with a particular third order boundary condition that arises   from the study of dynamical boundary conditions for the Cahn-Hilliard equation.  First we consider continuity properties under parameter variation (in which the parameter also affects the domain of  definition of the operator). Then we look at the  ball and the annulus geometries (together with the punctured ball), obtaining the eigenvalues as solutions of a precise equation involving special functions. An interesting outcome of our analysis in the annulus case is the presence of a bifurcation from the zero eigenvalue depending on the size of the annulus.
\end{abstract}
\medskip

\keywords{\emph{Key words}: Bilaplacian eigenvalues, bulk-boundary eigenvalues, eigenfunctions on balls and annulus, eigenvalue bifurcation, Cahn-Hilliard equation, dynamic boundary conditions, domain perturbation.}

\emph{AMS subject classification}: 35A09, 35B30, 35G05, 35P15.

\section{Introduction}

Let $\Omega$ be a bounded domain in $\mathbb R^n$ for $n \geq 2$ with smooth boundary and exterior normal $\nu$. We consider the energy functional
\begin{equation}\label{energy}
    E_\gamma[u] = \frac{\displaystyle\int_\Omega |\Delta u|^2}{\displaystyle\int_\Omega |u|^2 + \gamma \int_{\partial\Omega} |u|^2},
\end{equation}
for any possible choice of the parameter $\gamma>0$, with the associated energy space
\begin{equation*}
    H^2_\ast (\Omega) = \{ u \in H^2(\Omega): \partial_\nu u = 0 \mbox{ on } \partial \Omega\} \subset H^2(\Omega),
\end{equation*}
 endowed with the standard $H^2$-norm. We remark that the space $H^2_\ast(\Omega)$ is not the only possible choice for the functional \eqref{energy}, however we will not consider any energy space other than that. In this situation, the Euler-Lagrange equation for the associated minimization problem is
 \begin{equation} \label{eq: eigenvalue problem}
    \begin{cases}
     \mathcal L u:=\Delta^2 u = \lambda u & \mbox{in } \Omega, \\ Nu:=\partial_\nu u = 0 & \mbox{on } \partial\Omega, 
    \\ Bu:=-\partial_\nu(\Delta u) = \gamma \lambda u & \mbox{on } \partial\Omega.
    \end{cases}
\end{equation}
The objective of this paper is to study eigenvalues of the bulk-boundary operator $T=(\mathcal L,B)$. Remark that, while the parameter $\gamma$ takes values in the interval $(0,\infty)$, it is possible to formally define the associated problems for $\gamma=0$ and $\gamma=\infty$ (see \eqref{eq:Steklov energy} for the latter). These two extremal cases are particularly interesting because of the disappearance of the bulk-boundary coupling, making them easier to handle.

In the extremal case $\gamma=\infty$, problem \eqref{eq: eigenvalue problem} reduces to
\begin{equation}\label{Laplacian3/2}
    \begin{cases}
    \Delta^2 u = 0 & \mbox{in } \Omega, \\ \partial_\nu u = 0 & \mbox{on } \partial\Omega, 
    \\ -\partial_\nu(\Delta u) =  \lambda u & \mbox{on } \partial\Omega,
    \end{cases}
\end{equation}
which was introduced in the 1960's by \cite{KuttlerSigillito} (see also the references therein), and  several bounds were established. Note that versions of (fourth-order) eigenvalue problems in which the eigenvalue appears in the boundary condition have appeared in the literature \cite{Bucur-Ferrero-Gazzola,Bucur-Gazzola,Buoso,BuosoProvenzano,Ferrero-Gazzola-Weth,Gazzola-Sweers}. These are generally referred to as biharmonic Steklov eigenvalue problems. However, we remark that problem \eqref{eq: eigenvalue problem} has different features: the eigenvalue appears both in the bulk $\Omega$ and on the boundary $\partial \Omega$, coupled by the parameter $\gamma$, which makes it a {\em coupled} fourth-order in $\Omega$, third-order on $\partial\Omega$ eigenvalue problem.

In the particular case that $\Omega$ is a half-space, the operator $B$ is simply the fractional Laplacian $(-\Delta)^{3/2}$ on $\partial\Omega$, up to multiplicative constant (without being exhaustive, we cite \cite{Caffarelli-Silvestre,Chang-Ray,Case}, for instance). We also refer to \cite{LambertiProvenzano}, where the authors study traces of Sobolev functions by means of biharmonic eigenvalue problems that are strictly related to \eqref{eq: eigenvalue problem}.\\

 To the best of our knowledge, bulk-boundary eigenvalue problems first appeared in \cite{vonBelow-Francois} (see also \cite{Francois}), where the authors consider the second order problem
  \begin{equation} \label{eqlaplace bb}
    \begin{cases}
     -\Delta u = \lambda u & \mbox{in } \Omega, \\ 
     \partial_\nu u = \gamma \lambda u & \mbox{on } \partial\Omega,
    \end{cases}
\end{equation}
which can be seen as the analogue of \eqref{eq: eigenvalue problem} for the Laplace operator. They call it a dynamical eigenvalue problem as it can be derived it from the study of parabolic problem with dynamical boundary conditions for the heat equation. Similarly, the motivation behind \eqref{eq: eigenvalue problem} comes from the study of a linear version of the Cahn-Hilliard equation, which is a fourth-order problem. Indeed, consider the following parabolic problem for $v(x,t)$ with dynamical boundary conditions:
\begin{equation}\label{parabolic-problem} 
    \begin{cases}
    \partial_t v=-\Delta^2 v & x\in\Omega,t>0 \\ 
    \partial_\nu v = 0 & \mbox{on } \partial\Omega, \\ 
    \gamma\partial_t v=\partial_\nu(\Delta v)  & \mbox{on } \partial\Omega.
    \end{cases}
\end{equation}
Standard separation of variables $v(x,t)=u(x)T(t)$ then yields the eigenvalue problem \eqref{eq: eigenvalue problem}.

The system \eqref{parabolic-problem} can be understood as a simplified model for the linearization of the  Cahn-Hilliard equation with third-order dynamic boundary conditions, while the introduction of a zero-Neumann condition on $\partial\Omega$ makes the problem more analytically tractable. We only give a brief motivation here and refer to \cite[Section 1.1]{Knopf-Signori} and the references therein for  a summary of the current research in the subject. The Cahn-Hilliard equation is a phase-field model that describes phase separation processes of binary mixtures by a non-linear fourth-order equation,
$$\partial_t v=\Delta \mu,$$
where 
\begin{equation}\label{chemical-potential}\mu=-\Delta v+W'(v)\end{equation}
is the bulk potential in $\Omega$ (here $W$ is a double-well in $\Omega$).

In recent years, several types of dynamic
boundary conditions have been proposed in order to account for the
interactions of the material with the solid wall. In   \cite{Knopf-Lam-Liu-Metzger} the authors introduced a model which can be derived as a gradient flow for a Ginzburg-Landau type energy that contains both a bulk and a boundary contribution (compare to our \eqref{energy}), which gives a third-order dynamic boundary condition $$\partial_t v=-\beta\partial_\nu \mu \quad\text{on }\partial\Omega$$ and
introduces a separate chemical potential $\tilde \mu$ on the boundary  satisfying a relation similar to \eqref{chemical-potential} but with a different double-well on  $\partial\Omega$.  The term $-\partial_\nu\mu$ represents the mass flux at $\partial\Omega$, it is directly influenced by differences of the chemical potentials \begin{equation}\label{equation-L}L\partial_\nu \mu=\beta \tilde \mu-\mu \quad\text{on } \partial\Omega
\end{equation} for a constant $L$ related to the reaction rate.

Singular limits as $L\to 0$ and $L\to \infty$ were also considered in the above mentioned paper \cite{Knopf-Lam-Liu-Metzger}. On the one hand, in the limit $L\to \infty$ (previously studied in \cite{Liu-Wu:Cahn-Hilliard,Garcke-Knopf}) the two potentials are fully decoupled. On the other hand, we  are more interested in the limit $L\to 0$ (introduced in \cite{Goldstein-Miranville-Schimperna}) since it imposes that the two  potentials are  in chemical equilibrium $\mu=\beta\tilde\mu$, related by the parameter $\beta$ (in our setting, $\gamma=1/\beta$). We remark that well-posedness, analytic setup and basic statements for this eigenvalue problem  were established in \cite{Knopf-Liu}, for any $L$. \\

We also observe that the study of \eqref{eq: eigenvalue problem}, at least for the  particular values $\gamma=0$ and $\gamma=\infty$, is partly inspired from an eigenvalue problem in conformal geometry \cite{Gonzalez-Saez}.  In this paper the authors study the (fourth-order) Paneitz operator on locally conformally flat 4-manifolds, and its associated third-order boundary operator (see the papers \cite{Chang-Qing:zeta1,Chang-Qing:zeta2,Case} for the precise definitions). The main term of both operators coincides with $L$ and $B$, respectively, but they include lower order terms that make them conformally covariant (this is, they satisfy a simple intertwining rule under conformal changes of the metric) and contain a lot of geometric/topological information. In particular, the conformally covariant versions of $L,B$ appear in the Gauss-Bonnet formula in four dimensions for manifolds with boundary.

An interesting open question is, thus, to understand the intrinsic geometric meaning of a bulk-boundary coupling.  \\

Here we take an analytic approach and study geometric properties for the model \eqref{eq: eigenvalue problem}. It is easy to see that, for each fixed $\gamma$, there exists a sequence of eigenvalues 
$$0=\lambda_1(\gamma)\leq\lambda_2(\gamma)\leq\ldots.$$  Our first result deals with  the behavior of the eigenvalues and eigenfunctions at the limits $\gamma\to 0$ and $\gamma\to\infty$. Theorem \ref{thm-convergence} shows that they have good convergence properties. This is not completely trivial since, although eigenvalues usually behave continuously under perturbation,  the standard theory does not apply directly when the domain of the operator changes with the parameter (which, in our case, is the non-standard space $L_\gamma^2(\Omega,\partial\Omega)$, defined precisely in \eqref{domain}). Instead, our proof is inspired in the recent paper \cite{GHL} where the authors consider a number of applications of problem \eqref{eqlaplace bb}. \\

Next, we obtain expressions  for the eigenvalues and eigenfunctions in the ball (Theorem \ref{thm:general} below). More precisely, under the standard spherical harmonic decomposition, for each fixed projection $\ell=0,1,\ldots$, eigenfunctions  can be written in terms of special functions, while  eigenvalues are obtained as the roots of the equation \eqref{gammaeq}. Since this is a nontrival expression, we have given some numerical approximations, as well as looking at the limits $\gamma\to 0$
and $\gamma\to\infty$ explicitly. In any case, we show that the fundamental mode (the first non-zero eigenvalue) in the ball case occurs at the projection $\ell=1$ (see Theorem \ref{thm:fundamental-mode}), as one would expect for the ball geometry. 

Then we move to the annular geometry (and the punctured ball as a limiting case), obtaining similar formulas for the eigenvalues and eigenfunctions after projection over spherical harmonics. However, here we find a surprising behavior: indeed, a bifurcation from the zero eigenvalue may happen when varying the size of the annulus, which implies that there is a non-trivial radially symmetric eigenfunction (corresponding to $\ell=0$) whose associated eigenvalue is the fundamental mode. This picture is more easily understood by looking at Figures  \ref{n2_annulus} and \ref{n4_annulus} from Section \ref{section:annulus}. We remark that a similar phenomena can be  observed in the calculation of the lowest non-zero Steklov eigenvalue in a 2-dimensional annulus from \cite{Fraser-Schoen} or the lowest non-zero eigenvalue for the third-order boundary operator associated to the fourth-order Paneitz operator on a locally conformally flat 4-manifold \cite{Gonzalez-Saez}.

Regarding the punctured ball, we observe that the behavior is precisely that of the full ball in any dimension, meaning in particular that the eigenfunctions can be analytically continued up to the origin.  Notice that this phenomenon is different from what is observed for other Bilaplacian eigenvalue problems, such as the Dirichlet case or the buckling problem, where  in the low dimension regime ($n=2,3$) the radial eigenfunctions are the only ones that separate the ball from the punctured ball, while in the high dimension regime ($n\ge4$) the two problems coincide (see e.g., \cite{buosoparini, coffman1, coffman2}).\\

Our last observation is that problem \eqref{eq: eigenvalue problem} is closely related both to a Steklov-type problem and to a Neumann-type one, standing somewhat in the middle. This has deep consequences at the geometrical level, since it qualitatively changes the behavior of any domain optimization problem for the fundamental mode, which is a challenging problem that is outside the scope of this paper. In any case,  for the two extremal cases (that, we recall, are decoupled), optimization has already been considered: for $\gamma=0$, it is an immediate consequence of the fact that the problem reduces to the Neumann Laplacian squared (see equation \eqref{eq:neumann-squared} and the discussion right below), while the case $\gamma=\infty$ was studied in \cite{Xia-Wang}. In both the extremal cases the ball turns out to be the maximizer for the fundamental tone. 

On the other hand, for other values of $\gamma$ it is not even clear if the ball is a maximizer. Nevertheless, the arguments used in \cite{Buoso} (see also the references therein) suggest that the ball is at least a critical point under volume or perimeter constraint (cf. \cite{buosokennedy}). We also remark that, while continuity of the eigenvalues is expected to hold for smooth perturbations of the domain, the eigenvalues of problem \eqref{eq: eigenvalue problem} may not behave well if the perturbation is not smooth. We refer the interested reader to \cite{courant-hilbert, hale} for classical results on the Laplacian, as well as the more recent \cite{bll} for other related results regarding the spectral stability of nonnegative operators and to \cite{bogosel} for the Steklov problem.\\

 The paper will be structured as follows: first, in Section \ref{section:setup} we give all the functional analytic preliminaries. In Section \ref{section:limits} we give our first results and prove convergence as $\gamma \to 0$ and $\gamma\to \infty$. Finally, in Sections \ref{section:ball} and \ref{section:annulus} we consider the ball and annulus domains, respectively.

\section{Functional analytic setup}\label{section:setup}
Let $n\ge2$, let $\Omega$ be a bounded domain in $\mathbb R^n$, and let $\nu$ denote the exterior unit normal vector to $\partial \Omega$. For the sake of simplicity we will always consider $\partial\Omega\in C^2$, even though in principle it may be possible to consider the problems we study under a less smooth environment. Integrals will be over $\Omega$ or $\partial \Omega$, with their respective measures that will not be written explicitly in this exposition. Also, we will set $s = \frac{n-2}{2}$.
\\

We denote by $H^2(\Omega)$ the set of (real-valued) functions in $L^2(\Omega)$ whose first and second derivatives are all in $L^2(\Omega)$. We recall that, in this case, it makes sense to consider the trace operators $\Gamma_0,\Gamma_1:H^2(\Omega)\mapsto L^2(\partial\Omega)$ which are the extensions of the classical restriction operators
$$
\Gamma_0(u)=u|_{\partial\Omega},\ \ \Gamma_1(u)=(\nabla u)|_{\partial\Omega}\cdot\nu,\ \ \forall u\in H^2(\Omega)\cup C^{\infty}(\overline{\Omega}).
$$
Without loss of generality, we will denote by $u,\partial_\nu u$ the traces of an $H^2$-function. We refer to \cite{burenkov} for properties of Sobolev spaces and their traces.

We can therefore introduce the following space
\begin{equation}
\label{h2star}
    H^2_\ast (\Omega) = \{ u \in H^2(\Omega): \partial_\nu u = 0 \mbox{ on } \partial \Omega\} \subset H^2(\Omega),
\end{equation}
which is a Hilbert subspace of $H^2(\Omega)$ when endowed with the standard scalar product
\begin{equation*}
    \langle u,v \rangle_{H^2_\ast} = \int_\Omega [uv + (\Delta u)(\Delta v)],
\end{equation*}
producing the norm
\begin{equation} \label{eq:norm}
    \lVert u \lVert_{H^2_\ast}^2 = \int_\Omega [u^2 + (\Delta u)^2].
\end{equation}

For any $\gamma \in (0,\infty)$, we also define the space 
\begin{equation}\label{domain}
L_\gamma^2(\Omega,\partial\Omega):=\{(u,u|_{\partial\Omega}): u\in H^2(\Omega)\},
\end{equation}
which turns out to be a Hilbert space when endowed with the scalar product
\begin{equation*}
\langle u,v\rangle_{L_\gamma^2(\Omega,\partial\Omega)}
:=\int_\Omega uv + \gamma \int_{\partial\Omega} uv.
\end{equation*}
The Trace inequality immediately implies that $H^2_*(\Omega)$ is compactly embedded in $L_\gamma^2(\Omega,\partial\Omega)$,
%
leading to the Hilbert triple:
\begin{equation*}
    H^2_\ast(\Omega)  \hookrightarrow L^2_\gamma (\Omega,\partial\Omega) \hookrightarrow H^2_\ast(\Omega)^*,
\end{equation*}
where by $H^2_\ast(\Omega)^*$ we denote the dual of $H^2_\ast(\Omega)$.

Given $f \in L^2(\Omega)$ and $h \in L^2(\partial\Omega)$, we consider the boundary value problem
\begin{equation}\label{eq}
\begin{cases}
Lu:=\Delta^2 u=f &  \text{in }\Omega,\\
Nu:=\partial_\nu u=0&  \text{on }\partial\Omega,\\
Bu:=-\partial_\nu(\Delta u)=h&  \text{on }\partial\Omega,
\end{cases}
\end{equation}
whose associated  bilinear functional is given by
\begin{equation*}\label{bilinear}
\mathcal Q(u,v)=\int_\Omega (\Delta u)(\Delta v).
\end{equation*}
In this notation, we can define weak solutions of \eqref{eq}
 as those functions $u \in H^2_\ast(\Omega)$ satisfying
\begin{equation}\label{eq:variationeq}
    \mathcal{Q}(u,v) = \int_\Omega fv + \gamma\int_{\partial\Omega} hv \quad \forall v \in H^2_\ast(\Omega).
\end{equation}

It is important to remark that the boundary conditions $N,B$ satisfy the so-called \emph{complementing condition} of \cite{ADN1,ADN2}. This is, for any tangent vector at the boundary $\tau(x)$ and for any orthogonal vector $v(x)$, the boundary operators $N(\tau + tv)$ and $B(\tau + tv)$ are linearly independent modulo the polynomial $(t+i|\tau|)^2$. For our case, it means that the polynomials $t$ and $t^3+t$ are linearly independent modulo $(t+i)^2$. Satisfying this complementing condition is crucial in order to have solvability of polyharmonic problems, see also \cite[Chaper 2]{GGS}.

Existence of solutions of problem \eqref{eq} follows from a standard Fredholm argument thanks to the energy estimate $
   \lVert u \lVert_{H^2_\ast}^2 \leq \mathcal{Q}(u,u) + \lVert u \lVert_{L^2(\Omega)}^2.$
Thus equation \eqref{eq} is well posed, has good Fredholm properties, and  if the compatibility condition
\begin{equation*}
\int_\Omega f+\gamma\int_{\partial \Omega}h=0
\end{equation*}
holds, then there exists a one-dimensional set of solutions to  \eqref{eq}. Two such solutions only differ by a constant.

Regularity for solutions of polyharmonic problems has been well studied. In particular, given $k\geq 4$, for a-priori estimates in Sobolev spaces  we recall (\cite[Theorem 2.20]{GGS}) that  
\begin{equation}\label{estimate1}
   \lVert u \lVert_{W^{k,p}(\Omega)} \leq C \left(\lVert f \lVert_{W^{k-4,p}(\Omega)}+ \lVert h \lVert_{W^{k-3-\frac{1}{p},p}(\partial\Omega)} + \lVert u \lVert_{L^1(\Omega)}\right),
\end{equation}
while for H\"older regularity  we have (\cite[Theorem 2.19]{GGS})
\begin{equation}\label{estimate2}
\|u\|_{C^{k, \gamma}(\bar{\Omega})} \leq C\left(\left\|f\right\|_{C^{k-4, \gamma}(\bar{\Omega})}+\left\|h \right\|_{C^{k-3, \gamma}(\partial \Omega)} + \lVert u \lVert_{L^1(\Omega)}\right).
\end{equation}
Note that both estimates can be localized.\\

In the notation of \eqref{eq}, we define the operator  $T$  over the space $L_\gamma^2(\Omega,\partial\Omega)$, given by the pair $(L,B)$ subject to the Neumann condition $N=0$. We are interested in the corresponding eigenvalue problem, namely equation \eqref{eq: eigenvalue problem}.
It is standard to show that $T$ is a non-negative, densely defined, self-adjoint operator with compact resolvent, therefore its spectrum consists of a non-decreasing sequence of eigenvalues of finite multiplicities
$$
0=\lambda_1(\gamma,\Omega)\le \lambda_2(\gamma,\Omega)\le\lambda_3(\gamma,\Omega)\le\dots\le\lambda_k(\gamma,\Omega)\le\dots
$$
where each individual eigenvalue is repeated accordingly. In addition, the eigenfunctions $\{u_k\}_{k=1}^\infty$ can be normalized to form an orthonormal basis for $H^2_\ast(\Omega)$. We observe that the first eigenvalue is always zero with associated eigenfunction the constant function on $\Omega$. Moreover, bootstrapping the estimates \eqref{estimate1}-\eqref{estimate2} yields that all eigenfunctions are smooth. Since $\Omega$ is assumed to be connected, necessarily $\lambda_2(\gamma,\Omega)>0$. Notice that the eigenvalues always depend both on $\gamma$ and on $\Omega$; however, we will drop either argument when clear from the context, in order to ease the notation.

We also recall that, by the Courant min-max principle, the eigenvalues can be variationally characterized as
\begin{equation}
    \label{min-max}
\lambda_k(\gamma,\Omega)=\min_{\substack{V\subseteq H^2_\ast(\Omega),\\ \dim V=k}}\max_{u\in V\setminus\{0\}}E_\gamma[u],
\end{equation}
where $E_\gamma$ is the Rayleigh quotient associated with $T$:
\begin{equation} \label{eq:energy}
    E_\gamma[u] := \frac{\mathcal{Q}(u)}{\lVert u \lVert_{L_\gamma^2(\Omega,\partial\Omega)}} = \frac{\int_\Omega |\Delta u|^2}{\int_\Omega |u|^2 + \gamma \int_{\partial\Omega} |u|^2}.
\end{equation}
In particular, the first non-zero eigenvalue (the so-called \emph{fundamental tone}) can be written as
$$\lambda_2(\gamma,\Omega)=\min_{u\in\mathcal A_\gamma,u\not\equiv 0}E_\gamma[u],$$
where $\mathcal A_\gamma$ is the set of functions
\begin{equation*}
\mathcal A_\gamma:=\left\{u\in H^2_*(\Omega)\,:\,\int_\Omega u + \gamma\int_{\partial\Omega}u=0  \right\}.
\end{equation*}

We note that this problem has a scaling property that affects also the $\gamma$ factor, due to the scale difference between the interior measure and the boundary surface area. More precisely for $R>0$, we call $R\Omega = \{ x \in \mathbb{R}^n : x/R \in \Omega\}$. Then, if we denote $\lambda(\gamma,\Omega)$ the eigenvalue in \eqref{eq: eigenvalue problem} for a fixed $\gamma$ and a domain $\Omega$, we have that
\begin{equation*}
    \lambda(\gamma,\Omega) = R^4 \lambda(R\gamma, R\Omega).
\end{equation*}

Next, although the previous discussion considered only the situation $\gamma\in(0,\infty)$, the two limiting cases $\gamma=0$ and $\gamma=\infty$ are actually very interesting. If we set $\gamma=0$ in \eqref{eq: eigenvalue problem} we obtain
 \begin{equation} \label{eq:neumann-squared}
    \begin{cases}
     \Delta^2 u = \lambda u & \mbox{in } \Omega, \\ \partial_\nu u = \partial_\nu(\Delta u) = 0 & \mbox{on } \partial\Omega,
    \end{cases}
\end{equation}
which can be associated with the Poisson-type problem 
\begin{equation*} 
    \begin{cases}
     \Delta^2 u = f & \mbox{in } \Omega, \\ \partial_\nu u = \partial_\nu(\Delta u) = 0 & \mbox{on } \partial\Omega,
    \end{cases}
\end{equation*}
for any $f\in L^2(\Omega)$ (see \cite[Section 1.1.3]{GGS} for a discussion of the Navier problem for the Bilaplacian). The latter, if we set $v=-\Delta u$, can be decomposed into the system of differential equations
\begin{equation*} 
    \begin{cases}
     -\Delta v = f & \mbox{in } \Omega, \\ \partial_\nu v =0 & \mbox{on } \partial\Omega,
    \end{cases}
		\qquad\text{and}\qquad
		\begin{cases}
     -\Delta u = v & \mbox{in } \Omega, \\ \partial_\nu u =0 & \mbox{on } \partial\Omega.
    \end{cases}
\end{equation*}
Since $\Omega$ is smooth, this immediately implies that the associated operator is the classical Neumann Laplacian squared, that is, the eigenvalues of \eqref{eq:neumann-squared} are the squared of the eigenvalues of
 \begin{equation*} 
    \begin{cases}
     -\Delta u = \lambda u & \mbox{in } \Omega, \\ \partial_\nu u = 0 & \mbox{on } \partial\Omega.
    \end{cases}
\end{equation*}
We refer to \cite{adolfsson, GGS} for more details on this identification. Also in this case, the associated operator is self-adjoint with compact resolvent, resulting in a non-decreasing sequence of eigenvalues of finite multiplicities, diverging to plus infinity, and the corresponding eigenfunctions can be normalized to form an orthonormal basis of $H^2_\ast(\Omega)$. We will list them as $\lambda_k(0,\Omega)$.\\

In addition, when $\gamma=\infty$, if we consider instead the Rayleigh quotient
\begin{equation} \label{eq:Steklov energy}
\tilde E_\gamma [u] = \frac{\int_\Omega (\Delta u)^2}{\frac{1}{\gamma}\int_\Omega u^2 +  \int_{\partial \Omega} u^2} = \gamma E_\gamma [u],
\end{equation}
it is easy to see that this case is associated with the Steklov-type eigenvalue problem
 \begin{equation} \label{eq:ks-eig}
    \begin{cases}
     \Delta^2 u = 0 & \mbox{in } \Omega, \\ \partial_\nu u = 0 & \mbox{on } \partial\Omega,\\
		-\partial_\nu(\Delta u) = \lambda u & \mbox{on } \partial\Omega.
    \end{cases}
\end{equation}
Problem \eqref{eq:ks-eig} was first introduced in \cite{KuttlerSigillito}, and has recently gained attention for its relations with the other biharmonic problems, as well as related questions (see e.g., \cite{buosokennedy, LambertiProvenzano, Liu}). Problem \eqref{eq:ks-eig} admits as well a non-decreasing sequence of eigenvalues of finite multiplicities, diverging to plus infinity, and the corresponding eigenfunctions can be normalized to form an orthonormal basis of $H^2_\ast(\Omega)$. We will list them as $\lambda_k(\infty,\Omega)$.

In this case, an application of Fichera's duality principle (see \cite{fichera}) allows to provide an additional characterization of the first non-trivial eigenvalue of \eqref{eq:ks-eig}. Let us set 
\begin{equation}\label{ficheras}
\tau(\Omega)=\inf_{0\neq w\in {\mathcal B}_\infty(\Omega)} 
\frac{\int_{\partial\Omega} (\partial_\nu w)^2 }{\int_\Omega w^2}
=\inf_{0\neq w\in \overline{{\mathcal B}}_\infty(\Omega)} 
\frac{\int_{\partial\Omega} (\partial_\nu w)^2 }{\int_\Omega w^2},
\end{equation}
where
\begin{equation*}
{\mathcal B}_\infty(\Omega):=\left\{w\in C^{2}(\overline{\Omega}) \, : \,\Delta w=0 \text{ in }\Omega,\, \int_\Omega w=0\right\},
\end{equation*}
and $\overline{{\mathcal B}}_\infty(\Omega)$ is the closure of ${\mathcal B}_\infty(\Omega)$ endowed with the norm
$$
\|w\|_{\mathcal B_\infty(\Omega)}=\|\partial_\nu w\|_{L^2(\partial \Omega)}.
$$

\begin{teor}
Suppose $\Omega\subseteq\mathbb R^n$ is a bounded Lipschitz open set. Then  $\overline{{\mathcal B}}_\infty(\Omega)\hookrightarrow L^2(\Omega)$ compactly, and problem \eqref{ficheras} admits a minimizer $w\in\overline{\mathcal B}_{\infty}(\Omega)$. If in addition $\partial\Omega\in C^2$, then
\begin{equation}\label{fks}
\tau(\Omega)=\lambda_2(\infty,\Omega),
\end{equation}
and moreover the minimizers of $\tau(\Omega)$ are the Laplacians of the eigenfunctions associated with $\lambda_2(\infty,\Omega)$.
\end{teor}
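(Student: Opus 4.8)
The plan is to exploit the substitution $w=\Delta u$, which turns \eqref{eq:ks-eig} into a problem on harmonic functions. If $u$ solves \eqref{eq:ks-eig} with eigenvalue $\lambda$, then $w:=\Delta u$ is harmonic in $\Omega$, satisfies $\int_\Omega w=\int_{\partial\Omega}\partial_\nu u=0$, and $\partial_\nu w=\partial_\nu(\Delta u)=-\lambda u$ on $\partial\Omega$; conversely, given a harmonic $w$ with $\int_\Omega w=0$, the Neumann problem $\Delta u=w$, $\partial_\nu u=0$ has (for $\partial\Omega\in C^2$) a solution $u\in H^2_*(\Omega)$, unique up to an additive constant, with $\Delta^2u=\Delta w=0$ and $\partial_\nu u=0$. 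I would use the first direction, applied to a $\lambda_2(\infty,\Omega)$-eigenfunction, to prove $\tau(\Omega)\le\lambda_2(\infty,\Omega)$, and the Neumann inversion to prove the reverse inequality.

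For the compact embedding and the existence of a minimizer I would work on the dense subspace $\mathcal B_\infty(\Omega)$, whose elements are smooth up to $\overline\Omega$. Green's identity together with $\Delta w=0$ gives $\int_\Omega|\nabla w|^2=\int_{\partial\Omega}w\,\partial_\nu w\le\|w\|_{L^2(\partial\Omega)}\|\partial_\nu w\|_{L^2(\partial\Omega)}$; combining the trace inequality $\|w\|_{L^2(\partial\Omega)}\le C\|w\|_{H^1(\Omega)}$ with the Poincar\'e--Wirtinger inequality (available since $\int_\Omega w=0$) yields $\|w\|_{H^1(\Omega)}\le C\|\partial_\nu w\|_{L^2(\partial\Omega)}=C\|w\|_{\mathcal B_\infty(\Omega)}$. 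Hence the identity map extends to a bounded embedding $\overline{\mathcal B}_\infty(\Omega)\hookrightarrow H^1(\Omega)$, which composed with the Rellich embedding $H^1(\Omega)\hookrightarrow L^2(\Omega)$ (compact on bounded Lipschitz domains) gives the asserted compact embedding $\overline{\mathcal B}_\infty(\Omega)\hookrightarrow L^2(\Omega)$; the same bound makes the quotient in \eqref{ficheras} continuous on $\overline{\mathcal B}_\infty(\Omega)\setminus\{0\}$, so its infima over $\mathcal B_\infty(\Omega)$ and over $\overline{\mathcal B}_\infty(\Omega)$ coincide. Existence of a minimizer then follows by the direct method: a minimizing sequence normalized in $L^2(\Omega)$ is bounded in the Hilbert space $\overline{\mathcal B}_\infty(\Omega)$, hence converges weakly there and, by compactness, strongly in $L^2(\Omega)$, and weak lower semicontinuity of $\|\cdot\|_{\mathcal B_\infty(\Omega)}$ finishes the argument.

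For the identity \eqref{fks}, assume now $\partial\Omega\in C^2$. To obtain $\tau(\Omega)\le\lambda_2(\infty,\Omega)$, take an eigenfunction $u$ for $\lambda_2:=\lambda_2(\infty,\Omega)$ and set $w=\Delta u\in\overline{\mathcal B}_\infty(\Omega)$; using Green's identity with $\Delta^2u=0$, $\partial_\nu u=0$ and the boundary condition one computes $\int_\Omega w^2=\int_\Omega(\Delta u)^2=\lambda_2\int_{\partial\Omega}u^2$ and $\int_{\partial\Omega}(\partial_\nu w)^2=\lambda_2^2\int_{\partial\Omega}u^2$, so the quotient in \eqref{ficheras} evaluated at $w$ equals exactly $\lambda_2$ (note $\int_{\partial\Omega}u^2\neq0$, else $u$ would be biharmonic with $u=\partial_\nu u=0$ on $\partial\Omega$, hence $u\equiv0$). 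For the reverse inequality, given $0\neq w\in\mathcal B_\infty(\Omega)$ I would solve $\Delta u=w$, $\partial_\nu u=0$ and fix the constant so that $\int_{\partial\Omega}u=0$; then $u$ is admissible in the variational characterization of $\lambda_2$, so $\lambda_2\int_{\partial\Omega}u^2\le\int_\Omega(\Delta u)^2=\int_\Omega w^2$, while Green's identity gives $\int_\Omega w^2=-\int_{\partial\Omega}u\,\partial_\nu w\le\left(\int_{\partial\Omega}u^2\right)^{1/2}\left(\int_{\partial\Omega}(\partial_\nu w)^2\right)^{1/2}$. Eliminating $\int_{\partial\Omega}u^2$ between these two inequalities gives $\int_{\partial\Omega}(\partial_\nu w)^2\ge\lambda_2\int_\Omega w^2$; taking the infimum over $w$ yields $\tau(\Omega)\ge\lambda_2$, hence \eqref{fks}.

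For the description of the minimizers, I would retrace the equality cases in the last computation: if $w$ attains $\tau(\Omega)=\lambda_2$, then both inequalities above must be saturated, so $\int_\Omega(\Delta u)^2=\lambda_2\int_{\partial\Omega}u^2$ (whence $u$ is a $\lambda_2$-eigenfunction, being a minimizer of the Rayleigh quotient over the relevant subspace) and $\partial_\nu w$ is proportional to $u|_{\partial\Omega}$, the constant being forced to equal $-\lambda_2$ by $\int_\Omega w^2=-\int_{\partial\Omega}u\,\partial_\nu w$; together with $\Delta^2u=0$ and $\partial_\nu u=0$ this says $u$ solves \eqref{eq:ks-eig} with eigenvalue $\lambda_2$, i.e.\ $w=\Delta u$ for a $\lambda_2$-eigenfunction, and the converse inclusion is the computation from the upper bound. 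The points that require care are the elliptic regularity needed to legitimize the integrations by parts and to place $\Delta u$ in $\overline{\mathcal B}_\infty(\Omega)$ for an eigenfunction $u$ (using the estimates \eqref{estimate1}--\eqref{estimate2}, and, if needed, a dilation argument identifying $\overline{\mathcal B}_\infty(\Omega)$ with the harmonic mean-zero functions having $L^2$ Neumann trace), together with the bookkeeping of which inequalities are equalities; the duality mechanism itself is otherwise elementary.
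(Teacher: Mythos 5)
Your argument is correct and rests on the same duality between a Neumann auxiliary problem and the Steklov eigenvalue problem, but two key steps follow a genuinely different route from the paper. For the compact embedding $\overline{\mathcal B}_\infty(\Omega)\hookrightarrow L^2(\Omega)$ you obtain $\|w\|_{H^1(\Omega)}\le C\|\partial_\nu w\|_{L^2(\partial\Omega)}$ directly from Green's identity $\int_\Omega|\nabla w|^2=\int_{\partial\Omega}w\,\partial_\nu w$, the $L^2$ trace inequality, and Poincar\'e--Wirtinger (using $\int_\Omega w=0$); the paper instead imports this inequality from results of Jerison--Kenig and Mitrea--Mitrea on Lipschitz domains. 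Your route is more elementary and self-contained. For the inequality $\tau(\Omega)\ge\lambda_2(\infty,\Omega)$ you argue pointwise in $w\in\mathcal B_\infty(\Omega)$: solving $\Delta u=w$, $\partial_\nu u=0$ with $\int_{\partial\Omega}u=0$, you combine the Steklov Rayleigh bound $\lambda_2\int_{\partial\Omega}u^2\le\int_\Omega w^2$ with the Green's-identity estimate $\int_\Omega w^2\le\bigl(\int_{\partial\Omega}u^2\bigr)^{1/2}\bigl(\int_{\partial\Omega}(\partial_\nu w)^2\bigr)^{1/2}$ and eliminate $\int_{\partial\Omega}u^2$, obtaining $\int_{\partial\Omega}(\partial_\nu w)^2\ge\lambda_2\int_\Omega w^2$ for every admissible $w$. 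The paper's proof of the same inequality goes through the Euler--Lagrange equation of a minimizer of $\tau(\Omega)$, converts it to the third-order Steklov boundary condition via the same auxiliary $u$, and then tests against $u$. Your variant never needs the existence of a minimizer for this direction and has the added advantage of directly displaying the equality cases, which is exactly what you exploit to identify the minimizers with $\Delta u$ for $\lambda_2$-eigenfunctions $u$. The one technical point you correctly flag, namely that $\Delta u$ of a $\lambda_2$-eigenfunction actually belongs to $\overline{\mathcal B}_\infty(\Omega)$, is resolved in the paper in the same spirit via the a-priori estimate \eqref{estimate1}.
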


We remark that the identity \eqref{fks} has already appeared in \cite{KuttlerSigillito}, although the authors do not provide an explicit proof and refer to \cite{fichera}. We add a proof of this fact here for the reader's convenience (cf.\ \cite[Theorem 3.23]{GGS}).

\begin{proof}
We start showing that the embedding $\overline{{\mathcal B}}_\infty(\Omega)\hookrightarrow L^2(\Omega)$ is compact when $\Omega$ is bounded and with Lipschitz boundary. It is  trivial to observe that $\overline{{\mathcal B}}_\infty(\Omega)\subseteq H^2(\Omega)$, since
$$
\|w\|_{\mathcal B_\infty(\Omega)}=\|\partial_\nu w\|_{L^2(\partial \Omega)}\le C \|w\|_{H^2(\Omega)},
$$
for some positive constant $C$. Now, combining \cite[Theorem 2]{jerken} with \cite[Proposition 2.3]{mitrea} we have that
$$
\|w\|_{H^1(\Omega)}\le C_1 \|\nabla w\|_{L^2(\Omega)}\le C_2\|w\|_{\mathcal B_\infty(\Omega)},
$$
for some positive constants $C_1,C_2$. This means that the embedding $\overline{{\mathcal B}}_\infty(\Omega)\hookrightarrow H^1(\Omega)$ is continuous, hence the compactness of $\overline{{\mathcal B}}_\infty(\Omega)\hookrightarrow L^2(\Omega)$.

Now we prove that the minimization problem \eqref{ficheras} admits a minimizer $w$. Consider a minimizing sequence $(w_n)_n$ such that $\|w_n\|_{{\mathcal B}_\infty(\Omega)}=1$. Up to a subsequence, there exists $w\in \overline{{\mathcal B}}_\infty(\Omega)$ such that $w_n\rightharpoonup w$, hence $w_n\to w$ in $L^2(\Omega)$. In particular
$$
\|w\|_{L^2(\Omega)}^{-2}=\lim_n \|w_n\|_{L^2(\Omega)}^{-2}=\tau,
$$
while the weak lower semicontinuity of the norm implies that
$$
\|\partial_\nu w\|_{L^2(\partial\Omega)}=
\|w\|_{{\mathcal B}_\infty(\Omega)}\le\liminf_n \|w\|_{{{\mathcal B}_\infty(\Omega)}}=1,
$$
namely
$$
\frac{\int_{\partial\Omega} (\partial_\nu w)^2 }{\int_\Omega w^2}\le\tau,
$$
proving that $w$ is a minimizer.

Now suppose that $\partial\Omega\in C^2$, and take a minimizer $w$ of $\tau(\Omega)$. Then, the Euler-Lagrange equation associated with the minimization problem reads
$$
\int_{\partial\Omega}\partial_\nu w\,\partial_\nu v=\tau(\Omega)\int_\Omega wv,\qquad\forall v\in\overline{{\mathcal B}}_\infty(\Omega).
$$
We set
$$
\begin{cases}
-\Delta u=w & \text{in}\ \Omega,\\
\partial_\nu u=0 & \text{on}\ \partial\Omega.
\end{cases}
$$
Note that there is a unique solution $u\in {\mathcal B}_\infty(\Omega)$ since $w\in \overline{{\mathcal B}}_\infty(\Omega)$ and $\Omega$ is smooth, and in particular $u\in H^2_\ast(\Omega)$. Hence
$$
\int_\Omega w v=-\int_\Omega v\Delta u =\int_\Omega \nabla u\cdot\nabla v=\int_{\partial\Omega}u\partial_\nu v,
$$
from which we get that
\begin{equation}\label{transf}
-\int_{\partial\Omega}\partial_\nu \Delta u\,\partial_\nu v=\tau(\Omega)\int_{\partial\Omega}u\partial_\nu v.
\end{equation}
Now, since $v$ varies in $\overline{{\mathcal B}}_\infty(\Omega)$, we get that $\partial_\nu v$ (on $\partial\Omega$) varies in $H^{3/2}(\partial\Omega)$, so that \eqref{transf} implies that $u$ satisfies
$$
-\partial_\nu \Delta u=\tau(\Omega)u\quad\text{on}\ \partial\Omega.
$$
Multiplying both sides by $u$ we obtain
$$
\tau(\Omega)\int_{\partial\Omega}u^2
=-\int_{\partial\Omega}u\partial_\nu \Delta u
=-\int_{\Omega}\nabla u\cdot\nabla \Delta u
=\int_\Omega (\Delta u)^2,
$$
meaning in particular that
$$
\tau(\Omega)=\frac{\int_\Omega (\Delta u)^2}{\int_\Omega u^2}
\ge \min_{u\in H^2_\ast(\Omega), \int_\Omega u=0}\frac{\int_\Omega (\Delta u)^2}{\int_\Omega u^2}=\lambda_2(\infty,\Omega),
$$
where the last equality is due to the fact that the additional condition $\int_\Omega u=0$ in the minimax characterization is quotienting away the constants (associated with $\lambda_1(\infty,\Omega)=0$). This proves that $\tau(\Omega)\ge\lambda_2(\infty,\Omega)$.

To prove the other verse of the inequality, we now consider $u\in H^2_\ast(\Omega)$ an eigenfunction associated with $\lambda_2(\infty,\Omega)$. By the regularity estimate \eqref{estimate1} it is easy to see that $w=-\Delta u$ belongs to $H^2(\Omega)$, therefore $u$ satisfies a.e.\ the equation
$$
-\partial_\nu \Delta u=\lambda_2(\infty,\Omega)u\quad\text{on}\ \partial\Omega.
$$
It is now possible to multiply both sides by $\partial_\nu v$ for $v\in {\mathcal B}_\infty(\Omega)$, and repeating the same computations as before we get
$$
\int_{\partial\Omega}\partial_\nu w\,\partial_\nu v=\lambda_2(\infty,\Omega)\int_\Omega wv,
$$
which shows that $\lambda_2(\infty,\Omega)\ge \tau(\Omega)$,
as desired.
\end{proof}

\section{Convergence to the limiting cases}\label{section:limits}

In this section we analyze the limiting behavior of the eigenvalues $\lambda_k(\gamma, \Omega)$ of problem \eqref{eq: eigenvalue problem} as the parameter $\gamma$ tends either to zero or to plus infinity. We will achieve this result by adapting the arguments of \cite[Theorem 5]{GHL}. 

We start with the following:

\begin{teor}
For any smooth and bounded domain $\Omega$, and for any $k\in\mathbb N$, the map
$$
\gamma\mapsto \lambda_k(\gamma)
$$
is non-increasing in $[0,+\infty)$, while the map
$$
\gamma\mapsto \gamma\lambda_k(\gamma)
$$
is non-decreasing in $[0,+\infty)$.
\end{teor}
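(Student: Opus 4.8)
The plan is to read both monotonicity statements straight off the variational characterization \eqref{min-max}, using the crucial structural fact that the energy space $H^2_\ast(\Omega)$ over which the min-max runs does \emph{not} depend on $\gamma$; only the Rayleigh quotient does. So everything reduces to checking pointwise monotonicity in $\gamma$ of the relevant quotients and then noting that such pointwise inequalities survive taking $\min_{V}\max_{u}$.

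For the first map, fix $u\in H^2_\ast(\Omega)\setminus\{0\}$, so that $\int_\Omega u^2>0$ and $E_\gamma[u]$ in \eqref{eq:energy} is a genuine fraction. Its numerator $\int_\Omega|\Delta u|^2$ is independent of $\gamma$, while the denominator $\int_\Omega u^2+\gamma\int_{\partial\Omega}u^2$ is non-decreasing in $\gamma\in[0,\infty)$ because $\int_{\partial\Omega}u^2\ge0$. Hence $\gamma\mapsto E_\gamma[u]$ is non-increasing for every fixed $u$. Since this holds simultaneously for all $u$, applying $\max_{u\in V\setminus\{0\}}$ and then $\min_{\dim V=k}$ preserves the inequality, and we conclude that $\gamma\mapsto\lambda_k(\gamma)$ is non-increasing on $[0,\infty)$. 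The endpoint $\gamma=0$ is included: there $\lambda_k(0)$ is the $k$-th eigenvalue of the Neumann Laplacian squared \eqref{eq:neumann-squared}, which admits the same min-max characterization with $E_0$, and the pointwise comparison $E_0[u]\ge E_{\gamma}[u]$ still holds.

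For the second map I would pass to the rescaled quotient $\tilde E_\gamma$ of \eqref{eq:Steklov energy}. For $\gamma>0$ one has $\gamma E_\gamma[u]=\tilde E_\gamma[u]$, and since multiplication by the positive constant $\gamma$ commutes with both the inner maximum and the outer minimum in \eqref{min-max}, it follows that
\[
\gamma\,\lambda_k(\gamma)=\min_{\substack{V\subseteq H^2_\ast(\Omega),\\ \dim V=k}}\ \max_{u\in V\setminus\{0\}}\ \tilde E_\gamma[u],\qquad \gamma>0.
\]
Now for fixed $u\ne0$ the denominator $\frac1\gamma\int_\Omega u^2+\int_{\partial\Omega}u^2$ of $\tilde E_\gamma[u]$ is non-increasing in $\gamma$, so $\gamma\mapsto\tilde E_\gamma[u]$ is non-decreasing on $(0,\infty)$; as before this pointwise monotonicity passes through the min-max, giving that $\gamma\mapsto\gamma\lambda_k(\gamma)$ is non-decreasing on $(0,\infty)$. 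The inequality extends to $\gamma=0$ trivially, since there the product equals $0\le\gamma'\lambda_k(\gamma')$ for every $\gamma'>0$.

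There is no genuine obstacle in this argument; the only points that need a little care are (i) recording that the domain of the min-max, $H^2_\ast(\Omega)$, is $\gamma$-independent, which is exactly what lets the monotonicity of the quotients transfer to the eigenvalues without any perturbation-theoretic input, and (ii) the separate bookkeeping at $\gamma=0$, where the bulk-boundary coupling disappears and $\lambda_k(0)$ must be identified with the Neumann-Laplacian-squared eigenvalues as recalled around \eqref{eq:neumann-squared}.
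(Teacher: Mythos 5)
Your argument is exactly the paper's proof: both rely on the min-max formula \eqref{min-max} over the $\gamma$-independent space $H^2_\ast(\Omega)$, together with the pointwise monotonicity of $E_\gamma[u]$ (for the first map) and of $\tilde E_\gamma[u]$ (for the second). The extra remarks you add, that pointwise inequalities pass through $\min_V\max_u$ and the separate check at $\gamma=0$, are correct and only make explicit what the paper leaves implicit.
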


\begin{proof}
The proof follows from the variational characterization of the eigenvalues: given $\gamma_1<\gamma_2$ we immediately see that
$$
E_0[u]\ge E_{\gamma_1}[u]\ge E_{\gamma_2}[u],
$$
for any $u\in H^2_\ast(\Omega)$, so that applying the min-max formula \eqref{min-max} we obtain as a consequence that
$$
\lambda_k(\gamma_1)\ge \lambda_k(\gamma_2),
$$
for any $k\in\mathbb N$. On the other hand, 
$$
\tilde E_{\gamma_1}[u]\le \tilde E_{\gamma_2}[u],
$$
for any $u\in H^2_\ast(\Omega)$, so that applying once more the min-max principle we obtain as a consequence that
$$
\gamma_1 \lambda_k(\gamma_1)\le \gamma_2\lambda_k(\gamma_2),
$$
for any $k\in\mathbb N$.
\end{proof}

\begin{cor}\label{cor:monotonicity}
For any $k$ and for any $\gamma\in(0,+\infty)$ the following hold:
$$
\lambda_k(0)\ge\lambda_k(\gamma)\ \text{ and }\ \gamma\lambda_k(\gamma)\le\lambda_k(\infty).
$$
\end{cor}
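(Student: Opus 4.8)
The plan is to read off both inequalities from the min-max formula \eqref{min-max} together with the elementary pointwise comparison of Rayleigh quotients that already underlies the previous theorem, the only substantive point being to identify the relevant limiting quotient with the eigenvalues $\lambda_k(\infty)$ of \eqref{eq:ks-eig}. For the first bound, observe that $\int_{\partial\Omega}|u|^2\ge 0$ forces the denominator of $E_\gamma$ in \eqref{eq:energy} to dominate that of $E_0$, so $E_0[u]\ge E_\gamma[u]$ for every $u\in H^2_\ast(\Omega)$; inserting this into \eqref{min-max} and minimizing over $k$-dimensional subspaces $V\subseteq H^2_\ast(\Omega)$ gives $\lambda_k(0)\ge\lambda_k(\gamma)$. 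This is precisely the $\gamma_1=0$, $\gamma_2=\gamma$ instance of the monotonicity established in the previous theorem.

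For the second bound I would pass to the rescaled quotient $\tilde E_\gamma$ of \eqref{eq:Steklov energy}. Since $\tilde E_\gamma=\gamma E_\gamma$, multiplying \eqref{min-max} by the fixed positive constant $\gamma$ gives $\gamma\lambda_k(\gamma)=\min_{V}\max_{u\in V\setminus\{0\}}\tilde E_\gamma[u]$, the minimum being over $k$-dimensional $V\subseteq H^2_\ast(\Omega)$. Because $\tfrac1\gamma\int_\Omega|u|^2\ge 0$, one has $\tilde E_\gamma[u]\le \tilde E_\infty[u]$ for all $u$, where $\tilde E_\infty[u]:=\int_\Omega|\Delta u|^2\big/\int_{\partial\Omega}|u|^2$, with the convention that this is $+\infty$ when the boundary trace of $u$ vanishes. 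Taking maxima over $V$ and then minima over $V$ yields $\gamma\lambda_k(\gamma)\le\min_{V}\max_{u\in V\setminus\{0\}}\tilde E_\infty[u]$, and it remains only to show that the right-hand side equals $\lambda_k(\infty)$.

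This last identity is the standard Courant characterization of \eqref{eq:ks-eig}, and it is the one genuinely non-routine step, delicate because $\int_{\partial\Omega}|u|^2$ is merely a semi-inner product on $H^2_\ast(\Omega)$ (degenerate on functions with vanishing trace), so the abstract spectral theorem cannot be invoked directly on the full space. I would justify it by the usual reduction: subspaces $V$ on which the trace map $u\mapsto u|_{\partial\Omega}$ is not injective contribute $\max_{u\in V}\tilde E_\infty[u]=+\infty$ and may be discarded; on the remaining ones, replacing each $u\in V$ by the unique biharmonic $\hat u\in H^2_\ast(\Omega)$ with $\partial_\nu\hat u=0$ and the same boundary trace decreases $\int_\Omega|\Delta u|^2$ without changing $\int_{\partial\Omega}|u|^2$, so $\tilde E_\infty[\hat u]\le\tilde E_\infty[u]$. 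This reduces the min-max to one over $k$-dimensional subspaces of the space of biharmonic functions in $H^2_\ast(\Omega)$ (closely related to $\overline{\mathcal B}_\infty(\Omega)$), on which $\int_{\partial\Omega}|u|^2$ is a genuine inner product, so the abstract theory returns exactly the eigenvalues $\lambda_k(\infty)$; alternatively one may simply invoke the variational characterization of \eqref{eq:ks-eig} recorded in \cite{GGS,KuttlerSigillito}. Combining the two steps gives $\gamma\lambda_k(\gamma)\le\lambda_k(\infty)$. I note that the finer statement — that these two bounds are in fact attained in the limits $\gamma\to 0$ and $\gamma\to+\infty$ — is the content of the convergence result of the next section and is not needed here, so the direct comparison above is the economical route.
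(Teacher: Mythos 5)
Your argument is correct and follows the same route as the paper: both inequalities are read off from the pointwise comparison of the Rayleigh quotients $E_\gamma$ and $\tilde E_\gamma$ via the Courant min-max formula \eqref{min-max}, which is exactly the content of the monotonicity theorem immediately preceding the corollary (applied at $\gamma_1=0$ and, for the rescaled quotient, at $\gamma_2=\infty$). The additional paragraph you devote to justifying $\min_V\max_{u\in V\setminus\{0\}}\tilde E_\infty[u]=\lambda_k(\infty)$ addresses a point the paper leaves implicit — it treats the Courant characterization of the Steklov problem \eqref{eq:ks-eig} as part of the standard self-adjoint well-posedness already recalled in Section~\ref{section:setup} — so your derivation via biharmonic replacement is careful and valid, but not strictly required for the corollary.
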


We are now ready to prove the convergence as $\gamma\to 0$.
\begin{teor}\label{thm-convergence}
For any smooth and bounded domain $\Omega$, and for any $k\in\mathbb N$, the function 
$$
\gamma\mapsto \lambda_k(\gamma)
$$
is continuous in $[0,+\infty)$ and we have that
$$
\lambda_k(\gamma,\Omega)\to\lambda_k(0,\Omega), \text{ as }\gamma\to0.
$$
More precisely, given an eigenpair $(\lambda_k(\gamma,\Omega),u_k(\gamma))$ there exists an eigenfunction $u_k(0)$ of problem \eqref{eq:neumann-squared} associated to the $k$-th eigenvalue $\lambda_k(0,\Omega)$ such that
$$
\lambda_k(\gamma,\Omega)\to\lambda_k(0,\Omega) \ \text{ and }\ u_k(\gamma)\xrightarrow{H^2_\ast} u_k(0), \text{ as }\gamma\to0.
$$
On the other hand, there exists an eigenfunction $u_k(\infty)$ of problem \eqref{eq:ks-eig} associated to the $k$-th eigenvalue $\lambda_k(\infty,\Omega)$ such that
$$
\gamma\lambda_k(\gamma,\Omega)\to\lambda_k(\infty,\Omega) \quad \text{ and }\quad \gamma^{\frac 1 2} u_k(\gamma)\xrightarrow{H^2_\ast} u_k(\infty), \quad\text{as }\gamma\to\infty.
$$
\end{teor}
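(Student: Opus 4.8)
The plan is to run the entire argument on the fixed form domain $H^2_\ast(\Omega)$ via the min-max formula \eqref{min-max}, exploiting that only the denominator of the Rayleigh quotient depends on $\gamma$, and to adapt the compactness scheme of \cite[Theorem 5]{GHL} to pass to the two singular limits. Continuity on the open interval $(0,+\infty)$ is elementary: for $\gamma_0>0$ and $u\in H^2_\ast(\Omega)$ the denominators $\|u\|_{L^2_\gamma}^2=\int_\Omega u^2+\gamma\int_{\partial\Omega}u^2$ satisfy, for $\gamma\le\gamma_0$, the bounds $\tfrac{\gamma}{\gamma_0}\|u\|_{L^2_{\gamma_0}}^2\le\|u\|_{L^2_\gamma}^2\le\|u\|_{L^2_{\gamma_0}}^2$ (and the mirror inequalities for $\gamma\ge\gamma_0$), hence $E_{\gamma_0}[u]\le E_\gamma[u]\le\tfrac{\gamma_0}{\gamma}E_{\gamma_0}[u]$; inserting this into \eqref{min-max} sandwiches $\lambda_k(\gamma)$ between $\tfrac{\gamma_0}{\gamma}\lambda_k(\gamma_0)$ and $\lambda_k(\gamma_0)$, so that $\lambda_k(\gamma)\to\lambda_k(\gamma_0)$ as $\gamma\to\gamma_0$. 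By Corollary \ref{cor:monotonicity} and monotonicity, the limits $\Lambda_k^0:=\lim_{\gamma\to0^+}\lambda_k(\gamma)=\sup_{\gamma>0}\lambda_k(\gamma)\le\lambda_k(0)$ and $\Lambda_k^\infty:=\lim_{\gamma\to+\infty}\gamma\lambda_k(\gamma)=\sup_{\gamma>0}\gamma\lambda_k(\gamma)\le\lambda_k(\infty)$ already exist, so everything reduces to proving these are equalities and to producing the eigenfunction limits.

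For $\gamma\to0$ I would fix a sequence $\gamma_j\to0^+$ and $L^2_{\gamma_j}$-normalized eigenfunctions $u_1(\gamma_j),\dots,u_k(\gamma_j)$. From $\mathcal Q(u_i(\gamma_j))=\lambda_i(\gamma_j)\le\lambda_i(0)$ and $\int_\Omega u_i(\gamma_j)^2\le1$ the family is bounded in $H^2_\ast(\Omega)$, so after passing (diagonally in $i$) to a subsequence, $u_i(\gamma_j)\rightharpoonup u_i^\ast$ in $H^2_\ast(\Omega)$ and strongly in $L^2(\Omega)$ and in $L^2(\partial\Omega)$ by the compact trace embedding. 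Since the boundary integrals remain bounded while $\gamma_j\to0$, the $L^2_{\gamma_j}$-orthonormality of the $u_i(\gamma_j)$ passes to the limit as $\int_\Omega u_i^\ast u_{i'}^\ast=\delta_{ii'}$ (in particular each $u_i^\ast\neq0$), and the weak formulation \eqref{eq:variationeq} passes — weak-$L^2$ convergence of the Laplacians on the left, strong $L^2(\Omega)$ convergence on the right, the boundary term disappearing — to $\mathcal Q(u_i^\ast,v)=\Lambda_i^0\int_\Omega u_i^\ast v$ for all $v\in H^2_\ast(\Omega)$, i.e.\ $u_i^\ast$ solves \eqref{eq:neumann-squared} with eigenvalue $\Lambda_i^0$. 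Testing the min-max for problem \eqref{eq:neumann-squared} against $\mathrm{span}(u_1^\ast,\dots,u_k^\ast)$ and using orthonormality gives $\lambda_k(0)\le\Lambda_k^0$, hence $\Lambda_k^0=\lambda_k(0)$; then $\int_\Omega|u_k^\ast|^2=1$ and $\mathcal Q(u_k(\gamma_j))=\lambda_k(\gamma_j)\to\lambda_k(0)=\mathcal Q(u_k^\ast)$ force $\|u_k(\gamma_j)\|_{H^2_\ast}\to\|u_k^\ast\|_{H^2_\ast}$, which combined with the weak convergence upgrades to $u_k(\gamma_j)\xrightarrow{H^2_\ast}u_k^\ast=:u_k(0)$.

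The limit $\gamma\to+\infty$ follows the same template applied to the rescaled quotient $\tilde E_\gamma$ of \eqref{eq:Steklov energy}, whose eigenvalues are $\gamma\lambda_k(\gamma)$ and whose $\tilde E_\gamma$-normalized eigenfunctions are precisely $v_k(\gamma):=\gamma^{1/2}u_k(\gamma)$ when $u_k(\gamma)$ is $L^2_\gamma$-normalized: one extracts subsequential weak limits $v_i^\ast$, uses $\tfrac1{\gamma_j}\int_\Omega v_i(\gamma_j)v_{i'}(\gamma_j)\to0$ to get $\int_{\partial\Omega}v_i^\ast v_{i'}^\ast=\delta_{ii'}$ and $\mathcal Q(v_i^\ast,v)=\Lambda_i^\infty\int_{\partial\Omega}v_i^\ast v$ (so $v_i^\ast$ solves \eqref{eq:ks-eig}), and then concludes via the min-max for \eqref{eq:ks-eig} and convergence of norms exactly as above. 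The one new ingredient is the $H^2_\ast$-boundedness of $\{v_k(\gamma)\}$: the normalization alone only yields $\int_\Omega v_k(\gamma)^2\le\gamma$, so I would first check that $u\mapsto\big(\mathcal Q(u)+\int_{\partial\Omega}u^2\big)^{1/2}$ is an equivalent norm on $H^2_\ast(\Omega)$ — the nontrivial estimate $\int_\Omega u^2\le C\big(\mathcal Q(u)+\int_{\partial\Omega}u^2\big)$ coming from a routine compactness–contradiction argument (a violating sequence converges weakly in $H^2_\ast(\Omega)$, strongly in $L^2(\Omega)$ and $L^2(\partial\Omega)$, with limit a nonzero constant vanishing on $\partial\Omega$, absurd) — whence $\int_\Omega v_k(\gamma)^2\le C(\gamma\lambda_k(\gamma)+1)$ stays bounded.

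The main obstacle is the identification step: a weak limit by itself only certifies that $\Lambda_k^0$ (resp.\ $\Lambda_k^\infty$) is \emph{some} eigenvalue of the limit problem, and one must exclude that the branches $\lambda_k(\gamma)$ all collapse onto a lower eigenvalue. This is precisely what the pairing of the monotone upper bound $\Lambda_k^0\le\lambda_k(0)$ (from Corollary \ref{cor:monotonicity}) with the min-max lower bound $\lambda_k(0)\le\Lambda_k^0$ resolves, and it is essential there that the limiting eigenfunctions inherit orthonormality — in $L^2(\Omega)$ for $\gamma\to0$ and in $L^2(\partial\Omega)$ for $\gamma\to+\infty$ — so that $\mathrm{span}(u_1^\ast,\dots,u_k^\ast)$ is genuinely $k$-dimensional and admissible in the corresponding min-max; the possible degeneracy of the limit eigenvalue is why the eigenfunction convergence is stated only along subsequences.
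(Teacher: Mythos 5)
Your argument is correct, and it takes a genuinely different route from the paper's. For the identification step $(\bar\lambda_k,\bar u_k)=(\lambda_k(0),u_k(0))$, the paper argues by induction on $k$: it decomposes the target eigenfunction $u_k(0)$ into its $L^2_\gamma$-projection onto the span of $u_1(\gamma),\dots,u_{k-1}(\gamma)$ plus an orthogonal remainder $U(\gamma)$, shows the projections vanish as $\gamma\to0$ using the inductive hypothesis, and feeds $U(\gamma)$ into the min-max to get the reverse inequality. You avoid the induction altogether by carrying all $k$ eigenfunctions along simultaneously and observing that the $L^2_{\gamma_j}$-orthonormal system, whose boundary contribution is $O(\gamma_j)$, passes in the limit to an $L^2(\Omega)$-orthonormal system of weak eigenfunctions; this makes $\mathrm{span}(u_1^\ast,\dots,u_k^\ast)$ a legitimate $k$-dimensional test space in the min-max for \eqref{eq:neumann-squared} and yields $\lambda_k(0)\le\Lambda_k^0$ directly. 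This is cleaner and sidesteps the algebraic bookkeeping in \eqref{longexpr}--\eqref{lastchain}. You also handle continuity on the open interval $(0,+\infty)$ by an elementary sandwich of Rayleigh quotients, where the paper instead reruns the compactness scheme; and for $\gamma\to+\infty$ you work directly with $v_k(\gamma)=\gamma^{1/2}u_k(\gamma)$ and supply the auxiliary Poincar\'e-type inequality $\int_\Omega u^2\le C\bigl(\mathcal Q(u)+\int_{\partial\Omega}u^2\bigr)$ to bootstrap $H^2_\ast$-boundedness, whereas the paper substitutes $t=\gamma^{-1}$ and appeals to symmetry of the argument. Each of your deviations is sound; the needed facts (the strong $L^2(\Omega)$ and $L^2(\partial\Omega)$ convergence from the compact trace embedding, passage to the limit in \eqref{eq:variationeq} with the boundary term vanishing, the ordering $\Lambda_1^0\le\dots\le\Lambda_k^0$ inherited from $\lambda_1(\gamma)\le\dots\le\lambda_k(\gamma)$, and the norm convergence upgrading weak to strong $H^2_\ast$ convergence) are all correctly identified, and your caveat that the eigenfunction convergence is only subsequential in case of multiplicity matches what the paper's proof actually establishes.
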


\begin{proof}
We start by considering the case $\gamma\to0$ and we observe that, since $\lambda_k(\gamma)$ is non-increasing in $\gamma$, then the limit for $\gamma\to0$ must exist, and we pose
$$
\bar{\lambda}_k=\lim_{\gamma\to0}\lambda_k(\gamma).
$$
Now we consider the eigenpair $(\lambda_k(\gamma),u_k(\gamma))$, where the eigenfunction $u_k(\gamma)$ is normalized in $L^2_\gamma(\Omega,\partial\Omega)$. We have that
$$
\int_\Omega |u_k(\gamma)|^2\le\|u_k(\gamma)\|^2_{L^2_\gamma(\Omega,\partial\Omega)}\le 1,\quad \text{and}\quad \int_\Omega(\Delta u_k(\gamma))^2=\lambda_k(\gamma),
$$
which tells us that the set $\{u_k(\gamma)\}_{\gamma>0}$ is bounded in $H^2_\ast(\Omega)$. We can then extract a subsequence of such eigenfunctions which is weakly convergent in $H^2_\ast(\Omega)$ as $\gamma\to0$, and we call the limit $\bar u_k$. Notice that also the sequence of associated eigenvalues will converge to $\bar \lambda_k$. It holds  that $(\bar\lambda_k,\bar u_k)$ is a solution to the following problem
$$
\int_{\Omega}\Delta \bar u_k \Delta v=\bar\lambda_k\int_{\Omega}\bar u_k v,\ \forall v\in H^2_\ast(\Omega),
$$
that is, it is an eigenpair for problem \eqref{eq:neumann-squared}.

Now we would like to prove that $(\bar\lambda_k,\bar u_k)=(\lambda_k(0),u_k(0))$, and we argue by induction on the index $k$ of the eigenpair. We remark that the multiplicity of $\lambda_k(\gamma)$ does not interfere with the argument we use, since for multiple eigenvalues we can always choose an associated eigenfunction from the corresponding eigenspace which does not overlap with the other eigenfunctions. Since the case $k=1$ is obvious, we assume  convergence for all the eigenpairs up to the index $k-1$. We set
\begin{equation}\label{decomposition}
u_k(0)=U(\gamma)+\sum_{j=1}^{k-1}\langle u_k(0),u_j(\gamma) \rangle_{L^2_\gamma} \,u_j(\gamma),
\end{equation}
for some function $U$.
This is clearly an orthogonal decomposition, and we furthermore decompose
\begin{equation}\label{orth}
\langle u_k(0),u_j(\gamma) \rangle_{L^2_\gamma}=
\langle u_k(0),u_j(0) \rangle_{L^2_\gamma}+
\langle u_k(0),u_j(\gamma)-u_j(0) \rangle_{L^2_\gamma}.
\end{equation}
For the first term in the right-hand side of \eqref{orth} we see that
$$
\langle u_k(0),u_j(0) \rangle_{L^2_\gamma}=\int_\Omega u_k(0) u_j(0) +\gamma\int_{\partial\Omega}u_k(0)u_j(0),
$$
where the first integral vanishes by hypothesis, and the second goes to zero as $\gamma\to0$.  For the second term in the right-hand side of \eqref{orth} we have instead that
$$
\langle u_k(0),u_j(\gamma)-u_j(0) \rangle_{L^2_\gamma}=\int_\Omega u_k(0) [u_j(\gamma)-u_j(0)] +\gamma\int_{\partial\Omega}u_k(0)[u_j(\gamma)-u_j(0)].
$$
Here the inductive hypothesis that $u_j(\gamma)\xrightarrow{H^2_\ast}u_j(0)$ combined with classical Sobolev embeddings and the Trace Theorem allows again to conclude that this term goes to zero as $\gamma\to0$. Therefore
$$
\text{for all}\quad 1\le j\le k-1,\quad \langle u_k(0),u_j(\gamma) \rangle_{L^2_\gamma}\to0\ \text{ as }\ \gamma\to0.
$$
Now we have, taking into account the decomposition \eqref{decomposition},
\begin{equation}\label{longexpr}\begin{split}
\lambda_k(0) &= \mathcal Q(u_k(0),u_k(0))\\
 &=\int_\Omega |\Delta U(\gamma)|^2+\sum_{j=1}^{k-1} \langle u_k(0),u_j(\gamma) \rangle_{L^2_\gamma}^2 \int_\Omega |\Delta u_j(\gamma)|^2
+2\sum_{j=1}^{k-1} \langle u_k(0),u_j(\gamma) \rangle_{L^2_\gamma} \int_\Omega (\Delta u_j(\gamma))(\Delta U(\gamma))\\
&\ge \int_\Omega |\Delta U(\gamma)|^2+\sum_{j=1}^{k-1} \langle u_k(0),u_j(\gamma) \rangle_{L^2_\gamma}^2 \int_\Omega |\Delta u_j(\gamma)|^2\\
&-2\sum_{j=1}^{k-1} \left|\langle u_k(0),u_j(\gamma) \rangle_{L^2_\gamma}\right| \left(\int_\Omega (\Delta u_j(\gamma))^2\right)^{\frac 1 2}\left(\int_\Omega(\Delta U(\gamma))^2\right)^{\frac 1 2}.
\end{split}\end{equation}
We observe that the last two terms above converge to zero as $\gamma\to0$, while the Courant min-max principle implies that
$$
\int_\Omega |\Delta U(\gamma)|^2\ge \lambda_k(\gamma)\left(\int_\Omega U(\gamma)^2+\gamma\int_{\partial\Omega} U(\gamma)^2\right).
$$
Here again the second term goes to zero, while
\begin{equation}\label{lastchain}\begin{split}
\int_\Omega U(\gamma)^2&=
\int_\Omega u_k(0)^2\\
&-2\sum_{j=1}^{k-1}\langle u_k(0),u_j(\gamma) \rangle_{L^2_\gamma} \int_\Omega u_j(\gamma) u_k(0)\\
&+\sum_{j,l=1}^{k-1}\langle u_k(0),u_j(\gamma) \rangle_{L^2_\gamma}\langle u_k(0),u_l(\gamma) \rangle_{L^2_\gamma} \int_\Omega u_j(\gamma)u_l(\gamma).
\end{split}\end{equation}
Hence from \eqref{longexpr} and \eqref{lastchain}, taking the limit $\gamma\to 0$, we obtain that
$$
\int_\Omega U(\gamma)^2\to\int_\Omega u_k(0)^2,\ \text{ as }\ \gamma\to0.
$$
Summing up, all this shows that
$$
\lambda_k(0)\ge \bar \lambda_k(1+o(1)),
$$
and since $\bar \lambda_k$ is an eigenvalue of problem \eqref{eq:neumann-squared}, this means that $(\bar \lambda_k,\bar u_k)=(\lambda_j(0),u_j(0))$ for some $1\le j\le k$. Supposing that $j<k$ leads to
\begin{equation*}\begin{split}
1&=\lim_{\gamma\to0}\int_\Omega u_j(0)u_k(\gamma)\\
&=\lim_{\gamma\to0}\int_\Omega (u_j(0)-u_j(\gamma))u_k(\gamma) +\lim_{\gamma\to0}\int_\Omega u_j(\gamma)u_k(\gamma)=0,
\end{split}\end{equation*}
a contradiction.
This concludes the proof in the case $\gamma\to0$.

The continuity of the eigenpair with respect to $\gamma\in[0,+\infty)$ can now be proven in the same way. As for limit $\gamma\to+\infty$, we can reverse the variable to $t=\gamma^{-1}$ and consider the eigenvalues
$$
\tilde \lambda_k(t)=t^{-1}\lambda_k(t^{-1})=
\min_{\substack{V\subseteq H^2_\ast(\Omega),\\ \dim V=k}}\max_{u\in V\setminus\{0\}} \,\frac{\int_\Omega |\Delta u|^2}{t\int_\Omega |u|^2 + \int_{\partial\Omega} |u|^2}.
$$
It is now clear that the same technique leads to the continuity of the eigenpair $(\tilde\lambda_k(t),t^{-\frac1 2 }u_k(t))$ with respect to $t$ up to $t=0$, which corresponds to the case $\gamma=+\infty$. Note that in this case the eigenfunctions must be multiplied by $t^{-\frac 1 2 }=\gamma^{\frac 1 2}$ in order to keep the normalization.
\end{proof}

\section{The eigenvalue problem in the ball}\label{section:ball}

In this section,  we set $\Omega = \mathbb{B}_1$ to be the unit ball in $\mathbb R^n$ and study the eigenvalue problem \eqref{eq: eigenvalue problem}. For simplicity, we will denote by  $\lambda$ the fourth-root of the eigenvalue, so the actual eigenvalue becomes $\lambda^4$.

A well known fact of polyharmonic problems on balls is that any solution of the equation
\begin{equation}
\label{biharmonic}
\Delta^2u=\lambda^4 u
\end{equation}
in $\mathbb{B}_1$ can be written as the product of a radial function times a spherical harmonic function (see e.g., \cite{Chasman}) using spherical coordinates $(r,\theta)$. Let us recall that the spherical harmonics are the eigenfunctions of the Laplace-Beltrami operator on the unit sphere $\mathbb S^{n-1}$, namely
\begin{equation}\label{Laplacian-spherical}-\Delta_{\mathbb{S}^{n-1}} \mathcal{Y}_{\ell,m}(\theta) = \ell(\ell+n-2)\mathcal{Y}_{\ell,m}(\theta).
\end{equation}
Here $\ell\in\mathbb N$, while $1\le m\le \mathcal C_{\ell,n}$, where $\mathcal C_{\ell,n}$ is the multiplicity of the eigenvalue $\ell(\ell+n-2)$. Any spherical harmonic associated with the eigenvalue $\ell(\ell+n-2)$ is said to be of order $\ell$, and we will drop the index $m$ whenever it is not necessary.

We rewrite equation \eqref{biharmonic} as
$$
(\Delta +\lambda^{2})(\Delta -\lambda^{2})u=0.
$$
For any $\lambda>0$, the general solution to $(\Delta +\lambda^{2})u=0$ has the form
$$
(A j_\ell(\lambda r)+B y_\ell(\lambda r))\mathcal{Y}_{\ell}(\theta),
$$
for some $\ell\in\mathbb N$, where  the functions $j_\ell$ and $y_\ell$ are the ultraspherical Bessel functions:
\begin{equation*}
    j_\ell(z) = z^{1-\frac n2} J_{\ell+\frac n2-1} (z),\quad y_\ell(z) = z^{1-\frac n2} Y_{\ell+\frac n 2 -1} (z).
\end{equation*}
We refer to e.g., \cite{olver2010nist}, which is the modern version of the classical \cite{Stegun}, for the definition and standard properties of Bessel functions. Notice that here any spherical harmonic of order $\ell$ produces a viable solution.

Similarly, for any $\lambda>0$ the general solution to $(\Delta -\lambda^{2})u=0$ has the form
$$
(C i_\ell(\lambda r)+D k_\ell(\lambda r))\mathcal{Y}_{\ell}(\theta),
$$
where the functions $i_\ell$ and $k_\ell$ are the ultraspherical modified Bessel functions:
\begin{equation*}
    i_\ell(z) = z^{1-\frac n2} I_{\ell+\frac n2-1} (z), \quad k_\ell(z) = z^{1-\frac n2} K_{\ell+\frac n 2 -1} (z).
\end{equation*}
Therefore, a general solution of \eqref{biharmonic} will take the form
$$
(A j_\ell(\lambda r)+B y_\ell(\lambda r)+C i_\ell(\lambda r)+D k_\ell(\lambda r))\mathcal{Y}_{\ell}(\theta).
$$
However, the functions $y_\ell$ and $k_\ell$ are singular at the origin and so is any linear combination (see \cite{Chasman})
$$
B y_\ell(\lambda r)+D k_\ell(\lambda r).
$$
Since eigenfunctions of \eqref{biharmonic} on the ball must be smooth at the origin, this means that $B=D=0$, so that such eigenfunctions will take the form
$$
(A j_\ell(\lambda r)+C i_\ell(\lambda r))\mathcal{Y}_{\ell}(\theta).
$$
The values of $A,C$, and $\lambda$ can now be obtained imposing the boundary conditions.

\begin{teor} \label{thm:general}
Fix $\gamma \geq 0$. The positve eigenvalues of
\begin{equation}\label{problem-ball}
    \begin{cases}
    \Delta^2 u = \lambda^4 u &\mbox{in } \mathbb B_1, \\
    \partial_\nu u = 0  &\mbox{on } \partial\mathbb B_1, \\
    \partial_\nu(\Delta u) = -\gamma \lambda^4 u  &\mbox{on } \partial\mathbb B_1,
    \end{cases}
\end{equation}
are all and only the solutions of the equation
\begin{equation} \label{gammaeq}
    2 j_\ell'(\lambda_\ell) i_\ell'(\lambda_\ell) + \gamma \lambda_\ell (j_\ell'(\lambda_\ell) i_\ell(\lambda_\ell) - i_\ell'(\lambda_\ell) j_\ell(\lambda_\ell)) = 0,
\end{equation}
for some $\ell\in\mathbb N$. For any such solution, the associated eigenfunctions take the form
$$
u_\ell(r)\mathcal Y_\ell(\theta)
$$
for some spherical harmonic $\mathcal Y_\ell$ of order $\ell$, where 
\begin{equation}\label{equation10}
    u_\ell(r) = i'_\ell(\lambda_\ell) j_\ell(\lambda_\ell r) - j'_\ell(\lambda_\ell) i_\ell(\lambda_\ell r).
\end{equation}
\end{teor}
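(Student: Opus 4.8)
The plan is to reduce the boundary-value problem \eqref{problem-ball} to an algebraic system in the two coefficients $A,C$ appearing in the general smooth solution $u(r)=A j_\ell(\lambda r)+C i_\ell(\lambda r)$ (multiplied by a fixed spherical harmonic $\mathcal Y_\ell$), and then to extract the eigenvalue equation as the vanishing of the determinant of that system. The preliminary discussion already established that every eigenfunction on the ball is, after projection onto spherical harmonics, of the form $u(r)\mathcal Y_\ell(\theta)$ with $u(r)=A j_\ell(\lambda r)+C i_\ell(\lambda r)$; so it only remains to impose the two boundary conditions at $r=1$.

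First I would compute the Neumann condition. Since $\partial_\nu u = \partial_r u$ at $r=1$, and $\partial_r\big(j_\ell(\lambda r)\big)=\lambda j_\ell'(\lambda r)$, the condition $\partial_\nu u=0$ reads
\begin{equation*}
\lambda\big(A j_\ell'(\lambda)+C i_\ell'(\lambda)\big)=0,
\end{equation*}
so (for $\lambda>0$) $A j_\ell'(\lambda)+C i_\ell'(\lambda)=0$. This immediately suggests the normalization $A=i_\ell'(\lambda)$, $C=-j_\ell'(\lambda)$, which is exactly \eqref{equation10}; one should double-check this pair is not identically zero, i.e.\ that $j_\ell'(\lambda)$ and $i_\ell'(\lambda)$ do not vanish simultaneously, which follows from the fact that $j_\ell$ and $i_\ell$ solve linearly independent second-order ODEs. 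Next I would compute the third-order condition. The key simplification is that $\Delta\big(j_\ell(\lambda r)\mathcal Y_\ell\big)=-\lambda^2 j_\ell(\lambda r)\mathcal Y_\ell$ and $\Delta\big(i_\ell(\lambda r)\mathcal Y_\ell\big)=+\lambda^2 i_\ell(\lambda r)\mathcal Y_\ell$, by the very definition of $j_\ell,i_\ell$ as solutions of $(\Delta\pm\lambda^2)u=0$. Hence $\Delta u = -\lambda^2 A j_\ell(\lambda r)+\lambda^2 C i_\ell(\lambda r)$ (times $\mathcal Y_\ell$), and $\partial_\nu(\Delta u)$ at $r=1$ equals $\lambda^3\big(-A j_\ell'(\lambda)+C i_\ell'(\lambda)\big)$, while $u$ at $r=1$ equals $A j_\ell(\lambda)+C i_\ell(\lambda)$. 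The boundary condition $\partial_\nu(\Delta u)=-\gamma\lambda^4 u$ therefore becomes
\begin{equation*}
\lambda^3\big(-A j_\ell'(\lambda)+C i_\ell'(\lambda)\big)=-\gamma\lambda^4\big(A j_\ell(\lambda)+C i_\ell(\lambda)\big).
\end{equation*}

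I would then substitute the normalization $A=i_\ell'(\lambda)$, $C=-j_\ell'(\lambda)$ into this last equation, divide by $\lambda^3$, and collect terms: the left side becomes $-i_\ell'(\lambda)j_\ell'(\lambda)-j_\ell'(\lambda)i_\ell'(\lambda)=-2 j_\ell'(\lambda)i_\ell'(\lambda)$, and the right side becomes $-\gamma\lambda\big(i_\ell'(\lambda)j_\ell(\lambda)-j_\ell'(\lambda)i_\ell(\lambda)\big)$. Rearranging gives precisely
\begin{equation*}
2 j_\ell'(\lambda)i_\ell'(\lambda)+\gamma\lambda\big(j_\ell'(\lambda)i_\ell(\lambda)-i_\ell'(\lambda)j_\ell(\lambda)\big)=0,
\end{equation*}
which is \eqref{gammaeq} (writing $\lambda_\ell$ for the $\ell$-dependent solution). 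Equivalently, one can phrase the whole argument as: the $2\times2$ homogeneous linear system in $(A,C)$ coming from the two boundary conditions has a nontrivial solution iff its determinant vanishes, and expanding that determinant yields \eqref{gammaeq}; the coefficient normalization just records a basis vector of the kernel. For the converse direction one checks that, given any $\lambda_\ell>0$ solving \eqref{gammaeq} and any spherical harmonic $\mathcal Y_\ell$ of order $\ell$, the function \eqref{equation10} times $\mathcal Y_\ell$ is a smooth (real-analytic) function on $\mathbb B_1$ satisfying all three lines of \eqref{problem-ball}, hence is a genuine eigenfunction with eigenvalue $\lambda_\ell^4$.

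The computation is essentially routine once the right identities are in hand; I expect the only genuinely delicate points to be bookkeeping ones. First, one must be careful that the problem statement uses $\lambda$ for the \emph{fourth root} of the eigenvalue, so the sign/power of $\lambda$ in each boundary condition must be tracked consistently (this is where the $\lambda^3$ versus $\gamma\lambda^4$ in the third-order condition comes from). Second, one should verify that no positive eigenvalue is lost in the normalization step: the vanishing of the determinant is equivalent to the existence of a nonzero kernel vector, and I would note explicitly that $(i_\ell'(\lambda),-j_\ell'(\lambda))\neq(0,0)$ so that \eqref{equation10} is a legitimate, nonzero eigenfunction, and also remark that the value $\lambda=0$ is excluded by hypothesis (it corresponds to the constant eigenfunction handled separately). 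Third, one must invoke that solutions on the punctured ball that are singular at the origin (the $y_\ell$, $k_\ell$ components) are ruled out by the smoothness requirement, but this was already argued in the text preceding the theorem, so I would simply cite it.
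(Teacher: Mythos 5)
Your proposal is correct and follows essentially the same route as the paper: project onto spherical harmonics, use the $(\Delta\pm\lambda^2)$ factorization to write $u_\ell(r)=A j_\ell(\lambda r)+C i_\ell(\lambda r)$, impose the two boundary conditions at $r=1$ as a homogeneous $2\times 2$ system, and obtain \eqref{gammaeq} as the vanishing of its determinant (equivalently by substituting the normalized kernel vector $(A,C)=(i_\ell'(\lambda),-j_\ell'(\lambda))$). The paper writes this slightly more tersely with boundary operators $\mathcal M,\mathcal N$ and the explicit $2\times 2$ determinant, but the substance of the argument is identical.
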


\begin{proof} The proof is a minor variation of  \cite{Chasman}, but we detail it here for completeness.

As we saw at the beginning of the section, the function $u_\ell$ will be of the form
\begin{equation*} \label{solsball}
    u_\ell(r) = A_\ell j_\ell(\lambda r) + B_\ell i_\ell (\lambda r),
\end{equation*}
for some constants $A_\ell,B_\ell,\lambda$. Now we impose boundary conditions. Notice that the condition $\partial_\nu u=0$ readily implies
\begin{equation*}
    A_\ell j'_\ell(\lambda) + B_\ell i'_\ell(\lambda) = 0,
\end{equation*}
and from here we obtain \eqref{equation10}.

Moreover, we can consider the boundary conditions as a system of homogeneous equations
\begin{equation*}
    \begin{cases}
    \mathcal Mu= \partial_r u  = 0, \\ \mathcal Nu = \partial_r \Delta u + \lambda^4 \gamma u  = 0.
    \end{cases}
\end{equation*}
Since we search non-trivial eigenvalues, this system must have a non-trivial solution, so we are bound to impose that the following determinant vanishes at $r = 1$:
\begin{equation*}
\begin{vmatrix}
\mathcal M j_\ell & \mathcal M i_\ell \\ \mathcal N j_\ell & \mathcal N i_\ell
\end{vmatrix}.
\end{equation*}
Thus, any number $\lambda$ is an eigenvalue of problem \eqref{problem-ball} if and only if it satisfies the equation
\begin{equation*}
    -\lambda^4 [2 j_\ell'(\lambda) i_\ell'(\lambda) + \gamma \lambda (j_\ell'(\lambda) i_\ell(\lambda) - i_\ell'(\lambda) j_\ell(\lambda))] = 0.
\end{equation*}
Finally, if $\lambda \neq 0$, we get \eqref{gammaeq}. 
\end{proof}

We remark that, in order to approximate numerically the value of the eigenvalues, one can rewrite \eqref{gammaeq} as a fixed-point problem for each $\ell$, for $\gamma\in(0,\infty)$:
\begin{equation}\label{fixed-point}
    \lambda = \frac{-2\gamma^{-1}}{\frac{i_\ell(\lambda)}{i'_\ell(\lambda)}-\frac{j_\ell(\lambda)}{j'_\ell(\lambda)}}
    =:\gamma^{-1}\Psi(\lambda).
\end{equation}
In Figure \ref{figure-Psi} below we plot the function $\Psi(\lambda)$  in dimensions $n=2,3$, for $\ell= 0,\ldots,5$, together with the identity function (in black).
\begin{figure}[H]
    \includegraphics[width = \textwidth]{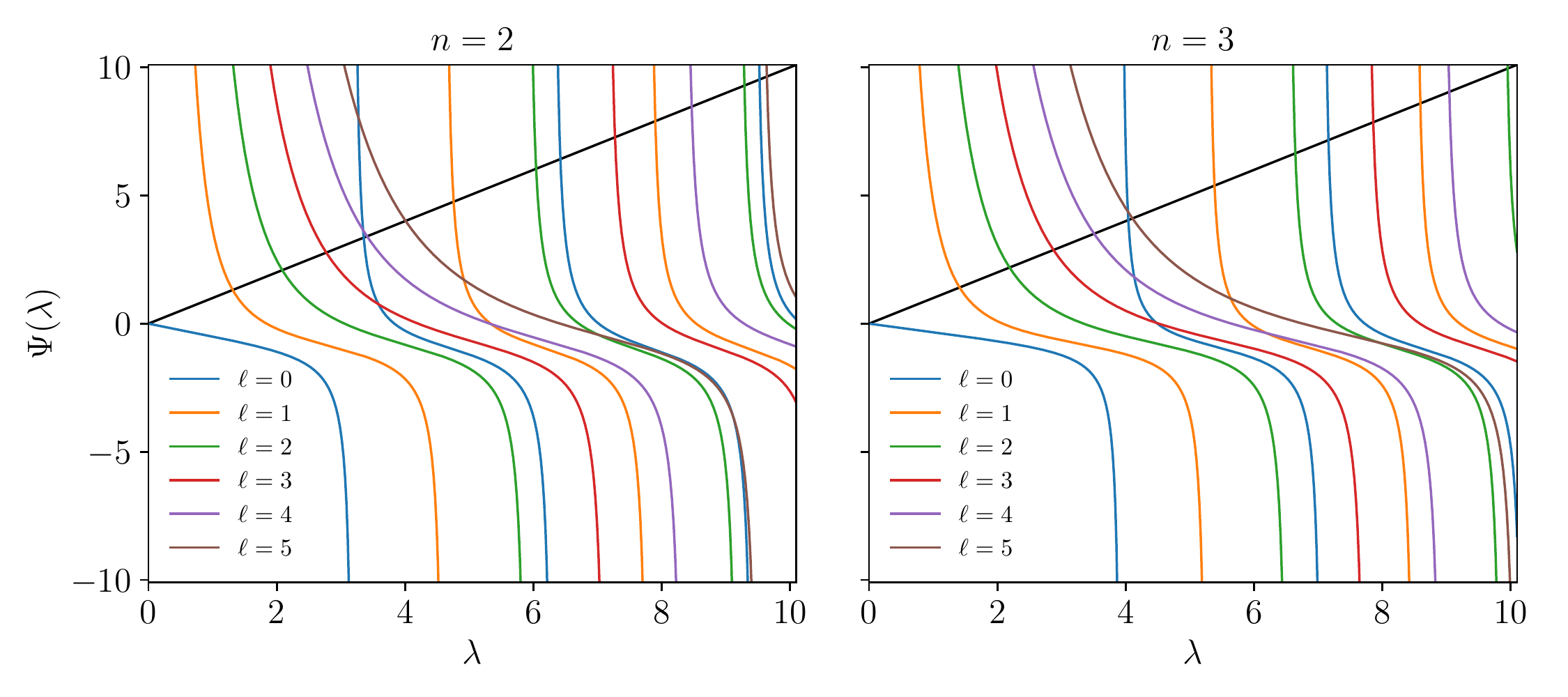}
    \caption{Function $\Psi(\lambda)$.}
    \label{figure-Psi}
\end{figure}



\begin{obs}\label{remark-lambda0}
For the case $\lambda=0$, solutions of problem $\Delta^2 u=0$  are, after projecting over spherical harmonics,  of the form
\begin{equation}\label{zero-eigenfunctions}
u_\ell(r) = \begin{cases} A+B\log r +Cr^2 + Dr^2\log r&\mbox{if }n =2,\,\ell=0,\\Ar + Br\log r+\frac{C}{r}+Dr^3&\mbox{if }n = 2,\,\ell=1,\\A +B\log r + Cr^2+\frac{D}{r^2}&\mbox{if }n = 4,\,\ell=0, \\Ar^\ell+Br^{-(\ell + n-2)}+Cr^{\ell + 2}+Dr^{-(\ell + n-4)}&\mbox{otherwise}, \end{cases}
\end{equation}
which shows that the eigenfunctions associated to the zero eigenvalue for \eqref{problem-ball} in the unit ball must be constant. 
\end{obs}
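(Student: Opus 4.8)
The plan is to separate variables and reduce to a radial ODE. Writing a solution of $\Delta^2 u=0$ on $\mathbb B_1$ as $u(r,\theta)=u_\ell(r)\mathcal Y_\ell(\theta)$ and using $\Delta\big(f(r)\mathcal Y_\ell\big)=\big(f''+\tfrac{n-1}{r}f'-\tfrac{\ell(\ell+n-2)}{r^2}f\big)\mathcal Y_\ell$ together with \eqref{Laplacian-spherical}, one computes the identity
\[
\Delta\big(r^\alpha\mathcal Y_\ell\big)=\big(\alpha(\alpha+n-2)-\ell(\ell+n-2)\big)\,r^{\alpha-2}\,\mathcal Y_\ell .
\]
Since $\Delta^2u=0$ is equivalent to $\Delta u$ being harmonic, I would first record that the radial harmonic functions of order $\ell$ are spanned by $r^\ell$ and $r^{-(\ell+n-2)}$ (or by $1$ and $\log r$ when $n=2,\ \ell=0$, where the two indicial roots collide), and then find particular solutions of $\Delta u_\ell=(\text{harmonic})$: the displayed identity gives $\Delta(r^{\ell+2}\mathcal Y_\ell)=2(2\ell+n)r^\ell\mathcal Y_\ell$, which is nondegenerate since $2\ell+n>0$, and $\Delta(r^{-(\ell+n-4)}\mathcal Y_\ell)=-2(2\ell+n-4)r^{-(\ell+n-2)}\mathcal Y_\ell$, which degenerates exactly when $2\ell+n=4$. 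Hence the power-function ansätze fail precisely in the cases $2\ell+n\in\{2,4\}$, i.e.\ $(n,\ell)\in\{(2,0),(2,1),(4,0)\}$, where a logarithmic factor must be inserted; collecting the four independent radial solutions in each regime yields \eqref{zero-eigenfunctions}.

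For the last assertion I would argue componentwise. Since $\Delta^2$ and the boundary operators $\partial_\nu(\cdot)$, $\partial_\nu(\Delta\,\cdot)$ all respect the spherical harmonic decomposition, an eigenfunction of \eqref{problem-ball} with $\lambda=0$ splits into finitely many pieces $u_\ell(r)\mathcal Y_\ell(\theta)$, each solving $\Delta^2=0$ in $\mathbb B_1$ with $\partial_\nu u=\partial_\nu(\Delta u)=0$ on $\partial\mathbb B_1$, and each smooth by the bootstrap of \eqref{estimate1}--\eqref{estimate2}. Smoothness at the origin discards every radial solution in \eqref{zero-eigenfunctions} other than the harmonic polynomial $r^\ell\mathcal Y_\ell$ and its companion $r^{\ell+2}\mathcal Y_\ell=|x|^2\,r^\ell\mathcal Y_\ell$: all the others are either genuinely singular at $0$, or involve a logarithm (the terms $\log r$, $r^2\log r$, $r\log r$), or equal $r^{-(\ell+n-4)}\mathcal Y_\ell$, which for the relevant $(n,\ell)$ reduces to $\mathcal Y_\ell$ (discontinuous at $0$ for $\ell\ge1$) or to a function such as $|x|$, none of which is smooth. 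Thus $u_\ell(r)=Ar^\ell+Cr^{\ell+2}$, and imposing the two boundary conditions on $\partial\mathbb B_1$, using $\Delta u=2C(2\ell+n)r^\ell\mathcal Y_\ell$, gives $\partial_\nu(\Delta u)|_{\partial\mathbb B_1}=2C(2\ell+n)\ell\,\mathcal Y_\ell$ and $\partial_\nu u|_{\partial\mathbb B_1}=(A\ell+C(\ell+2))\mathcal Y_\ell$. For $\ell\ge1$ these force $C=0$ and then $A=0$, so the piece vanishes; for $\ell=0$ the first condition is vacuous and the second gives $C=0$, leaving $u_0\equiv A$. Hence all components with $\ell\ge1$ vanish and the $\ell=0$ component is constant, so $u$ is constant.

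The computations are routine; the points requiring care are the identification of the three resonant cases $2\ell+n\in\{2,4\}$ and the precise verification that each non-polynomial radial solution in \eqref{zero-eigenfunctions}, once multiplied by its spherical harmonic, fails to be smooth at the origin (distinguishing in particular the harmless harmonic polynomials $r^\ell\mathcal Y_\ell$ and $r^{\ell+2}\mathcal Y_\ell$ from functions like $|x|$ or $x_i/|x|$ that superficially look like innocuous powers). I would also note that the last assertion follows abstractly from the Green identity $\int_{\mathbb B_1}|\Delta u|^2=\int_{\partial\mathbb B_1}\big(u\,\partial_\nu\Delta u-\partial_\nu u\,\Delta u\big)=0$, but the explicit list \eqref{zero-eigenfunctions} is worth recording since it will be reused in Section~\ref{section:annulus}.
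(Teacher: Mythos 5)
Your derivation of \eqref{zero-eigenfunctions} via the indicial exponents $\{\ell,\,-(\ell+n-2),\,\ell+2,\,-(\ell+n-4)\}$, with logarithms inserted exactly at the resonances $2\ell+n\in\{2,4\}$, is correct and is essentially the same mechanism the paper uses in Proposition~\ref{prop steklov} (there via the substitution $r=e^{-t}$); the remark itself is stated in the paper without proof, so you are supplying details the authors omit. Your second step—discarding the non-polynomial radial solutions by smoothness at the origin and then killing $A$ (for $\ell\geq1$) and $C$ from the two boundary conditions—is also correct, and your closing observation that the constancy conclusion also follows directly from the Green identity $\int_{\mathbb B_1}|\Delta u|^2=0$ is a clean alternative worth keeping.
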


\subsection{The limit as $\gamma\to 0$}

As we already saw, the limit $\gamma\to 0$ is actually a regular value for problem \eqref{eq: eigenvalue problem}, so that from Theorem \ref{thm:general} we get

\begin{cor}
Consider the eigenvalue problem
\begin{equation} \label{eq:0 problem}
    \begin{cases}
    \Delta^2 u = \lambda^4 u & \mbox{in } \Omega, \\ \partial_\nu u = 0 & \mbox{on } \partial\Omega, \\ \partial_\nu(\Delta u) = 0 & \mbox{on } \partial\Omega.
    \end{cases}
\end{equation}
Then, any non-zero eigenvalue solves the equation
\begin{equation} \label{eq:jprime}
    j'_\ell(\lambda_\ell) = 0,
\end{equation}
for some $\ell$, and the corresponding eigenfunctions have the form
\begin{equation*}
    u_\ell(r) =  j_\ell(\lambda_\ell r).
\end{equation*}
\end{cor}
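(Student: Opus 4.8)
The plan is to obtain this as a direct specialization of Theorem \ref{thm:general} to the case $\gamma = 0$, checking only that the limiting equation and the limiting eigenfunction are exactly what one gets by formally setting $\gamma = 0$ in \eqref{gammaeq} and \eqref{equation10}. So first I would recall that problem \eqref{eq:0 problem} is literally problem \eqref{problem-ball} with $\gamma = 0$ (keeping in mind $\Omega = \mathbb{B}_1$ here, since the statement is phrased for a general $\Omega$ only for notational uniformity with \eqref{eq:neumann-squared}). Applying Theorem \ref{thm:general} with $\gamma = 0$, the eigenvalue equation \eqref{gammaeq} collapses to
\begin{equation*}
    2\, j'_\ell(\lambda_\ell)\, i'_\ell(\lambda_\ell) = 0,
\end{equation*}
so that every positive eigenvalue satisfies $j'_\ell(\lambda_\ell) = 0$ or $i'_\ell(\lambda_\ell) = 0$ for some $\ell$.

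The only genuine point to address is ruling out the factor $i'_\ell(\lambda_\ell)$: I would note that the modified Bessel function $I_\nu$ (with $\nu = \ell + \tfrac n2 - 1 > 0$) is strictly increasing and positive on $(0,\infty)$, and in fact $i_\ell(z) = z^{1 - n/2} I_{\ell + n/2 - 1}(z)$ has a strictly positive derivative for $z > 0$ — this follows from the standard recurrence/monotonicity relations for modified Bessel functions (see \cite{olver2010nist}). Hence $i'_\ell(\lambda) \neq 0$ for any $\lambda > 0$, and the only admissible factor is $j'_\ell(\lambda_\ell) = 0$, which is \eqref{eq:jprime}. For the eigenfunction, I would substitute $j'_\ell(\lambda_\ell) = 0$ into \eqref{equation10}: the term $j'_\ell(\lambda_\ell)\, i_\ell(\lambda_\ell r)$ vanishes identically, leaving $u_\ell(r) = i'_\ell(\lambda_\ell)\, j_\ell(\lambda_\ell r)$, and since $i'_\ell(\lambda_\ell)$ is a nonzero constant it may be dropped, giving $u_\ell(r) = j_\ell(\lambda_\ell r)$ as claimed.

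Alternatively — and perhaps cleaner as a sanity check — one can rederive this directly: a smooth radial-times-spherical-harmonic solution of $\Delta^2 u = \lambda^4 u$ on $\mathbb{B}_1$ is $u_\ell(r) = A_\ell j_\ell(\lambda r) + B_\ell i_\ell(\lambda r)$; the two Neumann-type conditions $\partial_\nu u = 0$ and $\partial_\nu(\Delta u) = 0$ at $r=1$ read $A_\ell j'_\ell(\lambda) + B_\ell i'_\ell(\lambda) = 0$ and $-\lambda^3(A_\ell j'_\ell(\lambda) - B_\ell i'_\ell(\lambda)) = 0$ (using $\Delta j_\ell(\lambda r) = -\lambda^2 j_\ell(\lambda r)$ and $\Delta i_\ell(\lambda r) = \lambda^2 i_\ell(\lambda r)$), and adding these forces $A_\ell j'_\ell(\lambda) = 0$ while subtracting forces $B_\ell i'_\ell(\lambda) = 0$; since $i'_\ell(\lambda) \neq 0$ we get $B_\ell = 0$ and then $j'_\ell(\lambda) = 0$ for a nontrivial solution. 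I do not expect any real obstacle here; the only care needed is the elementary but necessary observation that $i'_\ell$ has no positive zeros, which is what allows the $\gamma = 0$ equation to reduce cleanly to \eqref{eq:jprime} rather than to a disjunction.
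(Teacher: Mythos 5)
Your proposal is correct and matches the paper's (implicit) argument: the paper simply states the corollary as an immediate specialization of Theorem \ref{thm:general} at $\gamma=0$, which is exactly what you do. You also supply the small but genuinely necessary observation — that $i'_\ell$ has no positive zeros, so the factorization $2j'_\ell(\lambda)i'_\ell(\lambda)=0$ reduces to $j'_\ell(\lambda)=0$ — which the paper leaves unstated here (it invokes the positivity of $i_\ell$ and its derivatives only later, in the proof of Theorem \ref{thm:fundamental-mode}).
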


Note that here again the eigenvalues are the squares of those of the Neumann Laplacian, with the same associated eigenfunctions.

A more precise behavior of the tail eigenvalues for problem \eqref{eq:0 problem} is given in the following:

\begin{prop}\label{prop:behavior-0}
Fix $n>2$ and $\ell \in \mathbb{N}$. Let $(\lambda_k)_{k=1}^\infty$ be the set of positive zeroes of  equation \eqref{eq:jprime}.
Then, for any  $k$ large enough, we have:
\begin{equation*}
    0<c<\lambda_{k+1} - \lambda_k \leq 4\pi.
\end{equation*}
\end{prop}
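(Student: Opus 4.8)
The plan is to reduce the statement to the classical asymptotics of zeros of Bessel functions. Recall that $j'_\ell(\lambda)=0$ is, by the definition $j_\ell(z)=z^{1-n/2}J_{\ell+n/2-1}(z)$ and a standard Bessel recurrence, equivalent to a linear combination $J_{\ell+n/2-1}'(\lambda) + (1-n/2)\lambda^{-1}J_{\ell+n/2-1}(\lambda)=0$, i.e.\ to an equation of the form $\alpha J_\mu(\lambda) + \beta \lambda J_\mu'(\lambda)=0$ with $\mu = \ell+n/2-1$ fixed and $\beta\neq0$ (here $\alpha=1-n/2$, $\beta=1$). Using the derivative identity $J_\mu'(z)=J_{\mu-1}(z)-\tfrac{\mu}{z}J_\mu(z)$ one rewrites this as a ``cross-type'' equation $a(\lambda)J_\mu(\lambda)+J_{\mu-1}(\lambda)=0$ with $a(\lambda)=O(\lambda^{-1})$. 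So the first step is to put \eqref{eq:jprime} into this normalized form and record that the coefficient is bounded and decaying.

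Next I would invoke the large-argument asymptotics of Bessel functions: $J_\mu(\lambda)=\sqrt{2/(\pi\lambda)}\bigl(\cos(\lambda-\tfrac{\mu\pi}{2}-\tfrac{\pi}{4})+O(\lambda^{-1})\bigr)$, with an analogous expansion (and the same $O(\lambda^{-1})$ error, uniformly for $\mu$ in a fixed range) for $J_{\mu-1}$ and for $J_\mu'$. Substituting into the normalized equation, the leading term becomes a single trigonometric function $\cos(\lambda - c_0) + O(\lambda^{-1})=0$ for a fixed phase $c_0$ depending on $n,\ell$. The positive roots of the pure cosine are an arithmetic progression with gap exactly $\pi$; a standard perturbation argument (e.g.\ Rouché on $\cos(\lambda-c_0)+O(\lambda^{-1})$, or the implicit function theorem near each simple zero of the cosine, valid once $\lambda$ is large enough that the $O(\lambda^{-1})$ term is $<1$) shows that for $k$ large the roots $\lambda_k$ satisfy $\lambda_k = k\pi + c_0' + O(k^{-1})$. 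This immediately yields $\lambda_{k+1}-\lambda_k = \pi + O(k^{-1})$, so for $k$ large enough the gap lies in any fixed neighborhood of $\pi$, in particular in $(c,4\pi)$ for a suitable $c>0$ (the bound $4\pi$ is of course far from sharp — the true limit is $\pi$ — so once the asymptotic expansion is in hand there is slack to spare, which removes any need to track constants carefully). Alternatively, and perhaps cleaner for an exposition, one can cite directly the known result that the positive zeros of $\alpha J_\mu(\lambda)+\beta\lambda J_\mu'(\lambda)$ interlace with those of $J_\mu$ and $J_{\mu-1}$ and are asymptotically equispaced with spacing $\pi$ (this is in, e.g., Watson or the NIST handbook \cite{olver2010nist}); combined with the interlacing, the upper bound $4\pi$ and a positive lower bound both follow for $k$ large.

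The hypothesis $n>2$ enters only to guarantee $\mu=\ell+n/2-1>0$ so that the Bessel function $J_\mu$ and its asymptotics behave in the standard way (for $n=2$, $\ell=0$ one has $\mu=0$, which is still fine, but the normalization and the statement's phrasing make it convenient to exclude it); in fact the argument is essentially dimension-independent once $\mu\ge 0$.

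The main obstacle I anticipate is purely bookkeeping: one must check that the $O(\lambda^{-1})$ error terms in the Bessel asymptotics are genuinely uniform and that, after forming the normalized equation $a(\lambda)J_\mu+J_{\mu-1}=0$, the coefficient $a(\lambda)$ does not conspire with the oscillation to create spurious near-double roots — but since $a(\lambda)\to 0$ and the limiting equation $\cos(\lambda-c_0)=0$ has only simple, well-separated zeros, this is routine. There is no deep idea here beyond "asymptotically the transcendental equation is a shifted cosine"; the only care needed is to fix the threshold "$k$ large enough" explicitly in terms of where the error term drops below $1$.
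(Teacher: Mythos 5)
Your proposal is correct and follows essentially the same approach as the paper: both reduce $j'_\ell(\lambda)=0$ to a relation among $J_{\ell+s}$ and $J'_{\ell+s}$ (equivalently $J_{\mu-1}$ via the recurrence), invoke Hankel's large-argument asymptotics, and conclude that the zeros are asymptotically equispaced with gap $\pi+O(k^{-1})$. The only cosmetic difference is that the paper phrases the conclusion as intersecting an asymptotic cotangent with the identity, while you phrase it as a perturbed cosine and invoke Rouché/IFT, which is if anything a more careful rendering of the same idea.
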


\begin{proof}
Using the definition of $j_\ell(r)$, we can expand:
\begin{equation*}
    j'_\ell(r) = (r^{-s} J_{\ell+s}(r))' = r^{-s} J'_{\ell+s}(r) - s r^{-s-1} J_{\ell+s}(r).
\end{equation*}
Then, we can rewrite the equation \eqref{eq:jprime} as a fixed-point equation
\begin{equation*}
    r = \frac{sJ_{\ell+s}(r)}{J'_{\ell+s}(r)}.
\end{equation*}
Now, to study the function $\frac{s J_{\ell+s}(r)}{J'_{\ell+s}(r)}$ for $r$ big enough, we can use  Hankel's asymptotic expansion of the Bessel function   as $|r| \to \infty$ from \cite[page 364, section 9.2]{Stegun}
\begin{gather*}
    J_{\ell} (r) \approx \sqrt{\frac{2}{\pi r}} \left( P(\ell,r) \cos\left(r - \tfrac{2\ell-1}{4} \pi\right) - Q(\ell,r) \sin\left(r - \tfrac{2\ell-1}{4} \pi\right)\right),\\
    J^\prime_{\ell} (r) \approx -\sqrt{\frac{2}{\pi r}} \left( S(\ell,r) \cos\left(r - \tfrac{2\ell-1}{4} \pi\right) + R(\ell,r) \sin\left(r - \tfrac{2\ell-1}{4} \pi\right)\right),
\end{gather*}
where
\begin{equation*}
    \begin{split}
    &P(\ell,r) = \sum_{k=0}^\infty \frac{(-1)^k \langle\ell,2k\rangle}{(2r)^{2k}},\\
    &Q(\ell,r) = \sum_{k=0}^\infty \frac{(-1)^k \langle\ell,2k+1\rangle}{(2r)^{2k+1}},\\
    &R(\ell, z) = \sum_{k=0}^{\infty}(-1)^{k} \frac{4 \ell^{2}+16 k^{2}-1}{4 \ell^{2}-(4 k-1)^{2}} \frac{\langle\ell, 2 k\rangle}{(2 z)^{2k}},\\
    &S(\ell, z) = \sum_{k=0}^{\infty}(-1)^{k} \frac{4 \ell^{2}+4(2k +1)^{2}-1}{4 \ell^{2}-(4 k+1)^{2}} \frac{\langle\ell, 2 k+1\rangle}{(2 z)^{2k+1}},
\end{split}
\end{equation*}
for $\langle\ell,k\rangle = \prod\limits_{s=1}^{k} (4 \ell^2 - (2s+1)^2)$.

Then, by inspecting this expansions we get that, for every fixed $\ell$, we have that our function $\frac{s J_{\ell+s}(r)}{J'_{\ell+s}(r)}$ is asymptotically  a cotangent function. As both the cotangent and the identity are increasing functions, we easily obtain an estimate for the difference of two eigenvalues (see Figure \ref{fig:limit_cotangent}).


\begin{figure}[H]
    \centering
    \includegraphics[width = 0.75\textwidth]{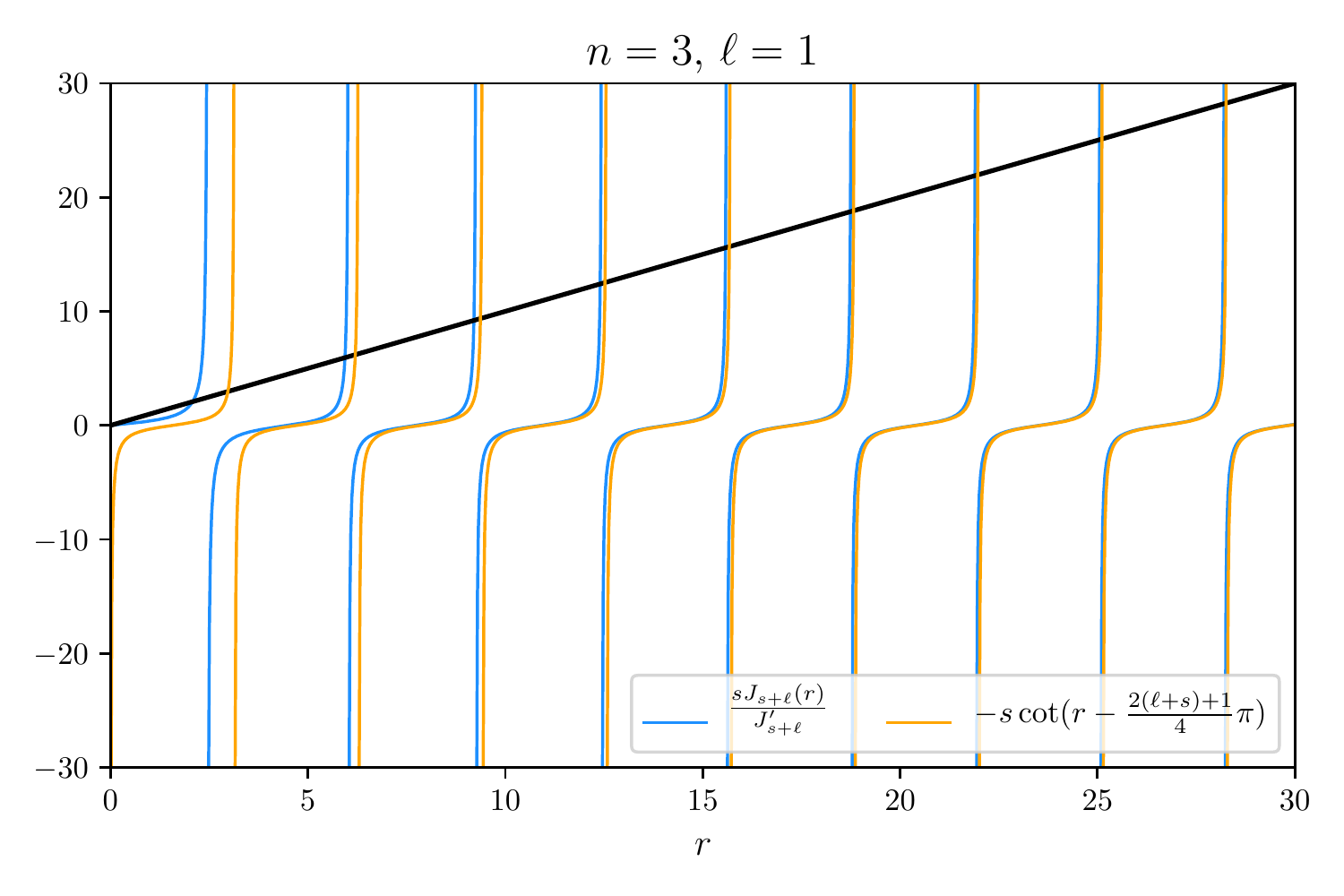}
    \caption{Comparison between function $\frac{s J_{\ell+s}(r)}{J'_{\ell+s}(r)}$ and its limit, a cotangent function scaled and displaced. The black line is the identity function.}
    \label{fig:limit_cotangent}
\end{figure}
\end{proof}


\subsection{The limit as $\gamma\to \infty$}

In the case of the ball, the limiting problem as $\gamma\to\infty$ looks like
\begin{equation} \label{Steklov problem}
    \begin{cases}
    \Delta^2 u = 0 &\mbox{in } \Omega,\\
    \partial_\nu u= 0 &\mbox{on } \partial\Omega,\\
    \partial_\nu\Delta u = -\lambda^4 u &\mbox{on }\partial\Omega.
    \end{cases}
\end{equation}
%
%

We first recall the following result, which is a special case of the so-called Almansi decomposition for polyharmonic operators (see e.g., \cite{Almansi1,Almansi2}). For the sake of completeness we provide here a proof.

\begin{prop} \label{prop steklov}
Consider the problem
\begin{equation*}
    \Delta^2 u = 0 \mbox{ in } \mathbb{R}^n,
\end{equation*}
Then, any solution can be written as
\begin{equation} \label{Steklov solutions}
    u(r,\theta) = \sum\limits_{\ell=0}^\infty ( A_\ell r^\ell + C_\ell r^{\ell+2})\mathcal{Y}_\ell(\theta).
\end{equation}
\end{prop}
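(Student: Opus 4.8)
The plan is to prove Proposition \ref{prop steklov} by combining the spherical-harmonic expansion of a general biharmonic function with the requirement that the solution be smooth (indeed real-analytic) on all of $\mathbb{R}^n$, in particular at the origin. First I would recall, exactly as in the body of Section \ref{section:ball}, that any solution of $\Delta^2 u = 0$ can be decomposed as $u(r,\theta) = \sum_{\ell=0}^\infty u_\ell(r)\mathcal{Y}_\ell(\theta)$, where each radial profile $u_\ell$ solves the ODE obtained by inserting $\Delta$ in spherical coordinates, namely $(\Delta_r - \ell(\ell+n-2)r^{-2})^2 u_\ell = 0$ with $\Delta_r = \partial_{rr} + \frac{n-1}{r}\partial_r$. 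This is a fourth-order Euler (equidimensional) ODE, so its solution space is spanned by powers $r^\alpha$ together with possible logarithmic terms when exponents coincide.

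The key computation is then to find the four characteristic exponents. The operator $\Delta_r - \ell(\ell+n-2)r^{-2}$ acting on $r^\alpha$ gives $(\alpha(\alpha-1) + (n-1)\alpha - \ell(\ell+n-2))r^{\alpha-2} = (\alpha-\ell)(\alpha+\ell+n-2)r^{\alpha-2}$, so applying it twice, the homogeneous solutions are associated with the exponents $\ell$, $-(\ell+n-2)$, $\ell+2$, $-(\ell+n-4)$ (with logarithmic corrections exactly in the degenerate cases listed in Remark \ref{remark-lambda0}, e.g. $n=2$ or $n=4$ with $\ell=0$, or $n=2$ with $\ell=1$). The point I would stress is that among these four, only $r^\ell$ and $r^{\ell+2}$ remain bounded (and smooth) as $r\to 0$: the exponents $-(\ell+n-2)$ and $-(\ell+n-4)$ are negative for all relevant $\ell$ and $n\ge 2$ (the borderline $\ell+n-4=0$ giving a constant, which is already captured by $r^\ell$ with $\ell=0$, and the truly singular $\log$ cases are likewise excluded by smoothness), so requiring $u$ to extend smoothly across the origin forces the coefficients of the singular solutions to vanish. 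Hence $u_\ell(r) = A_\ell r^\ell + C_\ell r^{\ell+2}$, which is precisely \eqref{Steklov solutions}.

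The main obstacle, and the only place requiring genuine care, is the bookkeeping of the degenerate (logarithmic) cases: one must check that in every situation where two of the four exponents coincide — producing a $\log r$ factor — the resulting solution is either singular at the origin (hence discarded) or already of the admissible form $r^\ell$ or $r^{\ell+2}$. This is a finite case-check in low dimensions, and Remark \ref{remark-lambda0} already records the relevant explicit bases, so I would simply point to that list and observe that in each case the only entries analytic at $0$ are the pure powers $r^\ell$ and $r^{\ell+2}$. A minor secondary point is justifying that the formal series \eqref{Steklov solutions} converges and indeed represents the given biharmonic function; this follows from standard elliptic regularity (a biharmonic function on $\mathbb{R}^n$ is real-analytic) together with the fact that the projection onto each spherical-harmonic subspace is continuous, so the expansion converges locally uniformly with all derivatives.
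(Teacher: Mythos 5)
Your proposal is correct and follows essentially the same route as the paper: project onto spherical harmonics, recognise the radial ODE as a fourth-order equidimensional (Euler) equation, identify the four characteristic exponents $\ell,\ -(\ell+n-2),\ \ell+2,\ -(\ell+n-4)$, and discard the two that are singular at the origin. The only differences are cosmetic --- the paper substitutes $r=e^{-t}$ and factors the resulting constant-coefficient operator, whereas you read off the indicial roots by plugging in $r^\alpha$ --- plus you make explicit two points the paper leaves implicit (the bookkeeping of the logarithmic/degenerate cases via Remark \ref{remark-lambda0}, and convergence of the spherical-harmonic expansion via analyticity).
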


\begin{proof}
Since the Bilaplacian commutes with the Laplace-Beltrami operator $\Delta_{\mathbb S^{n-1}}$, we know we can employ a separation of variables technique, solving for each projection  $u_\ell$. For this, we write the Laplacian in spherical coordinates
\begin{equation*}
    \Delta u_\ell = \partial_{rr} u_\ell + \frac{n-1}{r} \partial_r u_\ell + \frac{-\ell(\ell+n-2)}{r^2} u_\ell.
\end{equation*}
Now, the change of variables $r = e^{-t}$ gives
\begin{equation} \label{eq:laplaciano}
    \Delta u_\ell = e^{2t} (\partial_t + \ell) (\partial_t - (\ell+n-2)) u_\ell,
\end{equation}
which yields the formula for the Bilaplacian
\begin{equation*}
    \Delta^2 u_\ell = e^{4t} (\partial_t + \ell) (\partial_t - (\ell+n-2))(\partial_t + (\ell+2)) (\partial_t - (\ell+n-4)) u_\ell.
\end{equation*}
Hence, our solutions will be of the form
\begin{equation*}
    u(t,\theta) = \sum\limits_{\ell=0}^\infty ( A_\ell e^{-\ell t} + B_\ell e^{(\ell+n-2)t} + C_\ell e^{-(\ell+2) t} + D_\ell e^{(\ell+n-4) t}) \mathcal{Y}_\ell(\theta).
\end{equation*}
Since eigenfunctions are regular at the origin (that is,  when $t \to \infty$), we must set $B_\ell=D_\ell=0$ and this yields the conclusion.
\end{proof}

From Proposition \ref{prop steklov} it is then straightforward to completely characterize eigenfunctions and eigenvalues of problem \eqref{Steklov problem} (see also \cite{Xia-Wang}).

\begin{prop} \label{Steklov sols}
For the  the biharmonic Steklov eigenvalue problem \eqref{Steklov problem} in
 $\Omega=\mathbb B_1$ the unit ball we know that  eigenvalues are 
$$\lambda^4 = \ell^2 (2\ell+n)$$ with eigenfunctions
$$u_\ell (r,\theta) = ((\ell +2) r^\ell -\ell r^{\ell+2})\mathcal{Y}_\ell(\theta).$$
\end{prop}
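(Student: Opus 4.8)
The plan is to feed the Almansi-type representation of Proposition \ref{prop steklov} into the two boundary conditions of \eqref{Steklov problem}, working one spherical-harmonic projection at a time. Since on the unit ball only regularity at the origin matters, Proposition \ref{prop steklov} tells us every biharmonic function on $\mathbb{B}_1$ has the form \eqref{Steklov solutions}; hence it suffices to decide, for each fixed $\ell\in\mathbb{N}$, which pairs $(A_\ell,C_\ell)\neq(0,0)$ and which $\lambda$ make $u_\ell(r,\theta)=(A_\ell r^\ell+C_\ell r^{\ell+2})\mathcal{Y}_\ell(\theta)$ satisfy $\partial_r u_\ell=0$ and $\partial_r\Delta u_\ell=-\lambda^4 u_\ell$ at $r=1$.

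The only computational input I would need is the Laplacian of the two radial modes. The function $r^\ell\mathcal{Y}_\ell$ is harmonic, so $\Delta(r^\ell\mathcal{Y}_\ell)=0$; for the other mode I would apply the operator factorization \eqref{eq:laplaciano} (equivalently, the direct identity $\Delta(r^k\mathcal{Y}_\ell)=[k(k+n-2)-\ell(\ell+n-2)]\,r^{k-2}\mathcal{Y}_\ell$) with $k=\ell+2$, obtaining $\Delta(r^{\ell+2}\mathcal{Y}_\ell)=2(2\ell+n)\,r^\ell\mathcal{Y}_\ell$. Therefore $\Delta u_\ell=2(2\ell+n)C_\ell r^\ell\mathcal{Y}_\ell$, and evaluating at the boundary gives $\partial_r\Delta u_\ell|_{r=1}=2\ell(2\ell+n)C_\ell\mathcal{Y}_\ell$, together with $\partial_r u_\ell|_{r=1}=(\ell A_\ell+(\ell+2)C_\ell)\mathcal{Y}_\ell$ and $u_\ell|_{r=1}=(A_\ell+C_\ell)\mathcal{Y}_\ell$.

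Plugging these into the two boundary conditions produces the homogeneous linear system
\begin{equation*}
\begin{cases}
\ell A_\ell+(\ell+2)C_\ell=0,\\
\lambda^4 A_\ell+\bigl(2\ell(2\ell+n)+\lambda^4\bigr)C_\ell=0,
\end{cases}
\end{equation*}
whose coefficient determinant equals $2\ell^2(2\ell+n)-2\lambda^4$. A nontrivial $(A_\ell,C_\ell)$ exists exactly when this vanishes, i.e. $\lambda^4=\ell^2(2\ell+n)$, and in that case the kernel is spanned by $(A_\ell,C_\ell)=(\ell+2,-\ell)$, which yields the eigenfunction $u_\ell(r,\theta)=((\ell+2)r^\ell-\ell r^{\ell+2})\mathcal{Y}_\ell(\theta)$. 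The case $\ell=0$ is not genuinely separate: there the determinant is $-2\lambda^4$, forcing $\lambda=0$, consistently with $\lambda^4=0^2(2\cdot0+n)=0$ and with the constant eigenfunction.

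I do not expect a real obstacle here: the argument is essentially an exact solution of a $2\times 2$ linear system after the Almansi reduction, and completeness of the list of eigenpairs is automatic because \eqref{Steklov solutions} exhausts all biharmonic functions on the ball and the spherical harmonics of all orders form a complete orthonormal system on $\mathbb{S}^{n-1}$. The only point requiring a little care is the bookkeeping in the computation of $\Delta(r^{\ell+2}\mathcal{Y}_\ell)$ and the verification that the $\ell=0$ mode reproduces the trivial (zero) eigenvalue rather than an extra one.
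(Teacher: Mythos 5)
Your proof is correct and follows exactly the route the paper intends: plug the Almansi decomposition from Proposition \ref{prop steklov} into the two boundary conditions, reduce to a $2\times 2$ homogeneous linear system per mode $\ell$, and set the determinant to zero. The paper itself omits this computation as ``straightforward''; your details (in particular $\Delta(r^{\ell+2}\mathcal{Y}_\ell)=2(2\ell+n)r^\ell\mathcal{Y}_\ell$ and the resulting determinant $2\ell^2(2\ell+n)-2\lambda^4$) are exactly the calculation that must be carried out.
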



Now we show convergence of the eigenvalues and the eigenfunctions. Even though this was known by Theorem \ref{thm-convergence} in the previous Section, here we obtain convergence directly by looking at the asymptotics. Indeed, the trick is to find the correct scale that allows to take the limit $\gamma \to \infty$. In the following, we work directly with the $\ell$-th projection.

\begin{prop}
Consider the eigenvectors $u_\ell$ in Theorem \ref{thm:general}. Then, the minimizers $\gamma^\frac{1-\ell}{2} u_\ell$ and their associated non-zero eigenvalues converge to those in Proposition \ref{Steklov sols} when $\gamma \to \infty$.
\end{prop}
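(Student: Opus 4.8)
The plan is to take the explicit eigenfunctions $u_\ell(r) = i_\ell'(\lambda_\ell) j_\ell(\lambda_\ell r) - j_\ell'(\lambda_\ell) i_\ell(\lambda_\ell r)$ and the eigenvalue equation \eqref{gammaeq}, and perform a careful asymptotic analysis as $\gamma\to\infty$. From Corollary \ref{cor:monotonicity} we know $\gamma\lambda_k(\gamma)\le\lambda_k(\infty)$, so $\lambda_\ell=\lambda_\ell(\gamma)\to 0$ for every projection $\ell\ge 1$; thus the arguments $\lambda_\ell$ of the Bessel functions in \eqref{gammaeq} tend to zero, and one should expand all special functions around the origin. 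First I would recall the standard small-argument expansions: $j_\ell(z) = c_\ell z^\ell(1 + O(z^2))$ and $i_\ell(z) = c_\ell z^\ell(1 + O(z^2))$ with the \emph{same} leading constant $c_\ell = \frac{1}{2^{\ell+s}\Gamma(\ell+s+1)}$ (since $J_\nu$ and $I_\nu$ share leading behavior), where $s=\frac{n-2}{2}$. Consequently $j_\ell'(z) = c_\ell \ell z^{\ell-1}(1+O(z^2))$ and similarly for $i_\ell'$. The key point is that $j_\ell'(z) i_\ell(z) - i_\ell'(z) j_\ell(z)$ is a Wronskian-type quantity in which the leading $z^{2\ell-1}$ terms cancel, so one must carry the expansion to the next order; a short computation using $J_\nu'' + \frac1z J_\nu' + (1-\frac{\nu^2}{z^2})J_\nu=0$ (resp.\ the modified Bessel equation) gives $j_\ell'(z)i_\ell(z) - i_\ell'(z)j_\ell(z) = \frac{c_\ell^2}{\ell+s+1}\, z^{2\ell+1}(1 + O(z^2))$ or a similar clean expression — the precise constant is what needs to be pinned down.

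Next I would substitute these expansions into \eqref{gammaeq}: the first term $2j_\ell'(\lambda_\ell)i_\ell'(\lambda_\ell) \sim 2c_\ell^2 \ell^2 \lambda_\ell^{2\ell-2}$, and the second term $\gamma\lambda_\ell(j_\ell'i_\ell - i_\ell'j_\ell) \sim \gamma\, \kappa_\ell\, \lambda_\ell^{2\ell+2}$ for an explicit constant $\kappa_\ell>0$. Setting these equal in magnitude forces the balance $\gamma \lambda_\ell^{4} \sim \text{const}$, i.e.\ $\gamma\lambda_\ell^4 \to \ell^2(2\ell+n)$ after computing the constants carefully — this matches Proposition \ref{Steklov sols}. (For $\ell=0$ the eigenvalue is identically zero and there is nothing to prove; the statement concerns the nonzero eigenvalues, so $\ell\ge1$.) For the eigenfunction convergence, I would plug the same expansions into \eqref{equation10}: $u_\ell(r) = i_\ell'(\lambda_\ell)j_\ell(\lambda_\ell r) - j_\ell'(\lambda_\ell)i_\ell(\lambda_\ell r)$, expand each factor, and extract the leading term in $\lambda_\ell$. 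One finds $u_\ell(r)$ is, to leading order, proportional to $\lambda_\ell^{2\ell-1} r^\ell\big((\ell+2) - \ell r^2\big) + O(\lambda_\ell^{2\ell+1})$ — again after a second-order cancellation of the naive $r^\ell$ terms analogous to the Wronskian cancellation above. Since $\lambda_\ell \sim C\gamma^{-1/4}$, the natural normalization is $\gamma^{(1-\ell)/2}u_\ell$ up to the harmless numerical scaling built into the expansion constants; a short check (or an appeal to Theorem \ref{thm-convergence}, which already guarantees $H^2_*$-convergence of $\gamma^{1/2}u_\ell$ with a suitable rescaling) identifies the limit with $((\ell+2)r^\ell - \ell r^{\ell+2})\mathcal{Y}_\ell(\theta)$ as in Proposition \ref{Steklov sols}.

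The main obstacle is bookkeeping the \emph{subleading} terms. Both the eigenvalue equation and the eigenfunction formula involve expressions where the obvious leading-order contributions cancel identically (this is the Wronskian-type structure forced by the boundary condition $\partial_\nu u=0$), so naive first-order expansions give $0=0$ and nothing about convergence; one genuinely needs the $O(z^{\ell+2})$ term in $j_\ell,i_\ell$ and their derivatives, and must track these constants through to the end to recover the exact eigenvalue $\ell^2(2\ell+n)$ and the exact eigenfunction profile. A clean way to organize this is to use the recurrences $j_\ell'(z) = j_{\ell-1}(z) - \frac{\ell+n-2}{z}j_\ell(z)$ (and the modified analog) together with the explicit series $j_\ell(z) = \sum_{k\ge0} \frac{(-1)^k c_\ell}{k!\,(\ell+s+1)_k}\big(\tfrac{z}{2}\big)^{2k} z^\ell$, which makes the two-term expansions transparent and keeps the cancellations visible. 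Once the constants are nailed down, matching with Proposition \ref{Steklov sols} and with the correct power of $\gamma$ in the normalization is immediate.
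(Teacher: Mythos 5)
Your overall strategy matches the paper's: use the a priori bound $\gamma\lambda_k(\gamma)\le\lambda_k(\infty)$ to deduce $\lambda_\ell\to0$, then expand the ultraspherical Bessel functions in \eqref{gammaeq} and \eqref{equation10} near the origin (the paper cites series (14)--(15) in Chasman), exploit the Wronskian-type cancellation, and balance the two surviving leading terms. The eigenvalue part of your plan is essentially right, though a sign is off: with the standard series the leading constants differ and one gets
\[
j_\ell'(z)i_\ell(z)-i_\ell'(z)j_\ell(z) = -\frac{c_\ell^2}{\ell+s+1}\,z^{2\ell+1}\bigl(1+O(z^2)\bigr),
\]
so $\kappa_\ell<0$, which is what makes a root of \eqref{gammaeq} possible in the first place (one term is positive and the other negative). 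Since you set the two terms ``equal in magnitude,'' this does not derail the computation, and you do recover $\gamma\lambda_\ell^4\to\ell^2(2\ell+n)$.

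There is, however, a genuine slip in the eigenfunction step. After the cancellation you flag, the naive $\lambda^{2\ell-1}r^\ell$ term in $u_\ell(r)=i_\ell'(\lambda)j_\ell(\lambda r)-j_\ell'(\lambda)i_\ell(\lambda r)$ vanishes \emph{identically} at that order (the two products contribute $\ell c_\ell^2 \lambda^{2\ell-1}r^\ell$ with opposite signs), so the first nonvanishing term is two orders higher:
\[
u_\ell(r) = 2\,c_\ell\,c_\ell'\,\lambda^{2\ell+1}\bigl((\ell+2)r^\ell-\ell r^{\ell+2}\bigr) + O(\lambda^{2\ell+3}),
\qquad c_\ell' := \frac{c_\ell}{4(\ell+s+1)}.
\]
You wrote the surviving leading power as $\lambda^{2\ell-1}$ with the correction $O(\lambda^{2\ell+1})$; it should be $\lambda^{2\ell+1}$ with correction $O(\lambda^{2\ell+3})$. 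This matters because the power of $\lambda$, via $\lambda\sim C\gamma^{-1/4}$, is exactly what determines which power of $\gamma$ normalizes $u_\ell$. With the wrong exponent the deduction of the $\gamma$-scaling is not ``immediate'' as you claim, and the appeal to Theorem \ref{thm-convergence} cannot quietly fill the gap either, since there the factor $\gamma^{1/2}$ applies to the $L^2_\gamma$-\emph{normalized} eigenfunction, not to the explicit un-normalized $u_\ell$ of Theorem \ref{thm:general}; these differ by an $\ell$-dependent factor that is precisely what you are trying to compute. So: keep your framework, but carry the expansion of $u_\ell$ to the $\lambda^{2\ell+1}$ order, read off the profile $(\ell+2)r^\ell-\ell r^{\ell+2}$, and only then determine the correct $\gamma$-power from $\lambda\sim\gamma^{-1/4}$.
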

\begin{proof}
By Theorem \ref{thm:general}, the radial coordinates of the minimizers of the energy $E_\gamma [u]$ are:
\begin{equation*}
    u_\ell(r) = i'_\ell(\gamma^{-\tfrac{1}{4}} \lambda) j_\ell(\gamma^{-\tfrac{1}{4}}\lambda r) - j'_\ell(\gamma^{-\tfrac{1}{4}}\lambda) i_\ell(\gamma^{-\tfrac{1}{4}}\lambda r),
\end{equation*}
and their respective eigenvalues satisfy the equation:
\begin{equation*}
    2 j_\ell'(\lambda) i_\ell'(\lambda) + \gamma \lambda (j_\ell'(\lambda) i_\ell(\lambda) - i_\ell'(\lambda) j_\ell(\lambda) = 0.
\end{equation*}
By using the series for Bessel ultraspherical functions in \cite{Chasman} (formulae (14) and (15)) for the eigenvalue equation we arrive to:
\begin{equation*}
    2\gamma^\frac{1-\ell}{2} (\lambda^{2\ell-2} \ell^2 a_0 - 2 \lambda^{2\ell-2} a_0 a_2) + O(\gamma^{-\frac{\ell}{2}}) = 0,
\end{equation*}
where the terms $a_k$ are the terms in the series and depend only on the dimension $n$ and $\ell$.
Thus, if we consider the scaled eigenvectors $\gamma^\frac{1-\ell}{2} u_\ell$, the equation reads
\begin{equation*}
    2(\lambda^{2\ell-2} \ell^2 a_0 - 2 \lambda^{2\ell-2} a_0 a_2) + O(\gamma^{-\frac{\ell}{2}}) = 0
\end{equation*}
Taking the limit $\gamma \to \infty$ and solving for $\lambda$ we arrive to
\begin{equation*}
    \lambda^4 = \frac{\ell^2 a_0}{2 a_2} = \ell^2 \frac{\Gamma(1+ \frac{n}{2}+\ell) 2^{2+\ell}}{2\Gamma(\frac{n}{2}+\ell)2^\ell} = \ell^2 (2n + \ell).
\end{equation*}

\end{proof}

\subsection{The fundamental mode in the ball}

We now aim to identify which eigenfunction corresponds to the fundamental mode in the ball or, more precisely, for what value of the index $\ell$ we obtain the fundamental mode. In particular, we will see it corresponds to $\ell=1$. We follow the ideas in \cite{Chasman} for a different Bilaplacian eigenvalue problem. Remark that, in contrast to the previous subsections, this smallest non-zero eigenvalue  will be denoted by $\mu^{(1)}$.

For each $\ell$, the seeked eigenvalues are the roots of $\Phi_\ell(z) = 0$ where $$\Phi_\ell(z) = 2j_\ell'(z)i_\ell'(z)  + \gamma z \big(j_\ell'(z)i_\ell(z) - i_\ell'(z)j_\ell(z)\big).$$
Analogously to the usual Bessel functions $J_\nu, I_\nu$, the ultraspherical Bessel functions have several useful recurrence relations \cite{Buoso-Chasman-Provenzano}. In particular we will use
\begin{equation}\label{jl_derivative_1}
    j_\ell'(z) = \frac{\ell}{z}j_\ell(z) -j_{\ell+1}(z),
\end{equation}
\begin{equation}\label{il_derivative_1}
    i_\ell'(z) = \frac{\ell}{z}i_\ell(z) +i_{\ell+1}(z),
\end{equation}
as well as
\begin{equation}\label{jl_derivative_2}
    j_\ell'(z) = j_{\ell-1}(z) - \frac{\ell + n - 2}{z}j_\ell(z).
\end{equation}

We will find the study of the zeros of the ultraspherical Bessel functions in \cite{Chasman:isoperimetric} particularly useful. Let $p_{1,1}$ be the smallest zero of $j_1'(z)$. Then, we have the following result.
\begin{lema}[Lemmas 5 and 6 in \cite{Chasman:isoperimetric}]\label{chasman0_j1_j1diff} We have $j_1(z) >0$ for $z\leq p_{1,1}$ and $j_1' > 0$ on $(0,p_{1,1})$.
\end{lema}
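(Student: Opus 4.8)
The plan is to establish the two assertions separately, both via the series expansion of the ultraspherical Bessel function $j_1$ near the origin together with its known positivity on an initial interval. First I would recall (from \cite{Chasman, Buoso-Chasman-Provenzano}) the power series
$$
j_\ell(z) = z^\ell \sum_{k=0}^\infty a_k z^{2k}, \qquad a_0 > 0,
$$
with all $a_k$ alternating in sign after normalization; in particular for $\ell = 1$ the leading behaviour is $j_1(z) = a_0 z + O(z^3)$ with $a_0>0$, so $j_1(z)>0$ for all sufficiently small $z>0$. The function $j_1$ is a solution of the (ultraspherical) Bessel ODE $(\Delta + 1) u = 0$ after projection onto $\ell=1$, hence it is real-analytic on $(0,\infty)$; its positive zeros form a discrete increasing sequence. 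The claim that $j_1(z) > 0$ for $z \le p_{1,1}$ then reduces to showing that the first positive zero of $j_1$ is strictly larger than $p_{1,1}$, the first positive zero of $j_1'$.

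For this I would argue by contradiction using Rolle's theorem and sign considerations. Suppose $q$ were the first positive zero of $j_1$ with $q \le p_{1,1}$. Since $j_1(0)=0$ and $j_1>0$ on $(0,q)$ (as $j_1$ is positive for small $z$ and $q$ is its first zero), $j_1$ attains an interior maximum on $(0,q)$, at which $j_1'$ vanishes; this forces a zero of $j_1'$ strictly inside $(0,q) \subseteq (0,p_{1,1})$, contradicting the minimality of $p_{1,1}$. (If $q = p_{1,1}$ exactly, the same interior-maximum argument still produces a zero of $j_1'$ strictly below $p_{1,1}$.) Hence $j_1 > 0$ on $(0, p_{1,1}]$. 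For the second statement, $j_1' > 0$ on $(0,p_{1,1})$, I would again use that $j_1'(z) \to a_0 > 0$ as $z \to 0^+$ (from the series, $j_1'(z) = a_0 + O(z^2)$), so $j_1'$ is positive near $0$; since $p_{1,1}$ is by definition its first positive zero, $j_1'$ cannot change sign on $(0,p_{1,1})$, and therefore stays positive there. Both conclusions are thus immediate once the ordering of the first zeros of $j_1$ and $j_1'$ is pinned down.

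The only genuinely delicate point — and the one I would flag as the main obstacle — is justifying that $j_1$ is positive for all small $z>0$, i.e.\ extracting the sign of the leading series coefficient $a_0$, and (more substantively) confirming that between $0$ and the first zero of $j_1'$ there is no earlier zero of $j_1$; the contradiction argument above handles this cleanly, but it relies on the analyticity of $j_1$ and on $j_1$ having no zero at the origin of higher order than $z^1$, which is exactly what the explicit series gives. Since this lemma is quoted as Lemmas 5 and 6 of \cite{Chasman:isoperimetric}, in the write-up I would simply cite that reference and include the short Rolle-type argument above only as a remark for completeness, rather than reproving the full asymptotic structure of the ultraspherical Bessel functions.
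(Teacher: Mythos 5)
Your proposal is correct, but it is worth noting that the paper does not prove this lemma at all: it is stated with a bracketed attribution to Lemmas 5 and 6 of \cite{Chasman:isoperimetric} and no proof is given, so the authors are simply importing the result. What you have supplied is a self-contained and valid elementary argument. The two ingredients you use are exactly the right ones: the power-series expansion $j_\ell(z) = z^\ell(a_0 + a_1 z^2 + \cdots)$ with $a_0 = 2^{-\ell-s}/\Gamma(\ell+s+1) > 0$, which gives $j_1(z)>0$ and $j_1'(z)>0$ for small $z>0$ and rules out a higher-order vanishing at the origin; and a Rolle-type argument showing that any zero $q$ of $j_1$ with $0<q\le p_{1,1}$ would produce a critical point of $j_1$ in $(0,q)\subsetneq(0,p_{1,1}]$, contradicting the definition of $p_{1,1}$ as the smallest positive zero of $j_1'$. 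The second claim, $j_1'>0$ on $(0,p_{1,1})$, then follows from continuity and the intermediate value theorem since $j_1'$ is nonvanishing there and positive near $0$. One minor point of precision: the statement ``$j_1(z)>0$ for $z\le p_{1,1}$'' should be read as $0<z\le p_{1,1}$, since $j_1(0)=0$; your argument correctly establishes positivity on $(0,p_{1,1}]$. In short, your approach is sound and arguably more transparent than relying on the cited reference, though for economy the paper's choice to cite \cite{Chasman:isoperimetric} directly is defensible since these elementary Bessel facts are already established there in the same ultraspherical setting.
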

As a direct consequence we deduce:

\begin{lema}\label{lambda1_p11}
The smallest $\ell = 1$ eigenvalue on the unit ball, denoted by $\mu^{(1)}$, is smaller than $p_{1,1}$.
\end{lema}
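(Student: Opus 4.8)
The plan is to exhibit a test function in $\mathcal A_\gamma$ whose Rayleigh quotient $E_\gamma$ is at most $p_{1,1}^4$, so that by the variational characterization $\lambda_2(\gamma,\mathbb B_1)\le p_{1,1}^4$, and then argue that this bound is actually realized by an $\ell=1$ mode. The natural candidate is built from the $\ell=1$ eigenfunction of the $\gamma=0$ problem \eqref{eq:0 problem}: concretely, take $u(r,\theta)=j_1(p_{1,1}r)\,\mathcal Y_1(\theta)$, which by \eqref{jl_derivative_1} automatically satisfies $\partial_\nu u=0$ on $\partial\mathbb B_1$ since $p_{1,1}$ is a zero of $j_1'$, and which lies in $\mathcal A_\gamma$ because $\mathcal Y_1$ has zero average on the sphere. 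For this $u$ one computes directly that $\Delta^2 u=p_{1,1}^4u$ in $\mathbb B_1$, so $\mathcal Q(u)=\int_{\mathbb B_1}|\Delta u|^2=p_{1,1}^4\int_{\mathbb B_1}|u|^2$, and hence
\begin{equation*}
E_\gamma[u]=\frac{p_{1,1}^4\int_{\mathbb B_1}|u|^2}{\int_{\mathbb B_1}|u|^2+\gamma\int_{\partial\mathbb B_1}|u|^2}\le p_{1,1}^4.
\end{equation*}
This already gives $\lambda_2(\gamma,\mathbb B_1)\le p_{1,1}^4$; but strictly speaking $\mu^{(1)}$ denotes the smallest $\ell=1$ \emph{eigenvalue}, which a priori could be larger than $\lambda_2$ if the fundamental mode were at some other $\ell$. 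So the cleaner route is to argue directly about the $\ell=1$ branch.

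For the direct argument I would look at the function $\Phi_1(z)=2j_1'(z)i_1'(z)+\gamma z\big(j_1'(z)i_1(z)-i_1'(z)j_1(z)\big)$ from the statement, whose smallest positive root is $\mu^{(1)}$, and show $\Phi_1$ has a sign change on $(0,p_{1,1})$ — equivalently, that $\Phi_1(z)$ is strictly positive for $z$ small and has the opposite sign at, or just before, $z=p_{1,1}$. At $z=p_{1,1}$ we have $j_1'(p_{1,1})=0$, so $\Phi_1(p_{1,1})=-\gamma p_{1,1}\,i_1'(p_{1,1})\,j_1(p_{1,1})$. By Lemma \ref{chasman0_j1_j1diff}, $j_1(p_{1,1})>0$, and $i_1'(p_{1,1})>0$ because $i_1,i_1'$ are positive on $(0,\infty)$ (all Taylor coefficients of the modified ultraspherical Bessel functions are nonnegative); hence $\Phi_1(p_{1,1})<0$ for every $\gamma>0$. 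On the other hand, as $z\to0^+$ one checks from the series expansions that $j_1'(z),i_1'(z)>0$ and $j_1'(z)i_1(z)-i_1'(z)j_1(z)=O(z^{2\ell+1})>0$ (the Wronskian-type combination is positive for small $z$), so $\Phi_1(z)>0$ near $0$. By the intermediate value theorem $\Phi_1$ vanishes somewhere in $(0,p_{1,1})$, and since $\mu^{(1)}$ is the smallest such zero we conclude $\mu^{(1)}<p_{1,1}$.

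The main obstacle is the sign analysis near $z=0$ and the monotonicity bookkeeping in between: one must be sure that $\Phi_1$ does not already have a root below the point where one has cleanly identified its sign, and that the combination $j_\ell'i_\ell-i_\ell'j_\ell$ keeps a definite sign on the relevant interval. Both facts follow from the same positivity properties of the ultraspherical Bessel functions catalogued in \cite{Chasman:isoperimetric, Buoso-Chasman-Provenzano} — specifically that $j_1,j_1'$ are positive on $(0,p_{1,1})$ (Lemma \ref{chasman0_j1_j1diff}) while $i_1,i_1'$ are positive throughout — together with the recurrence relations \eqref{jl_derivative_1}--\eqref{jl_derivative_2}, which let one rewrite the derivative terms in a manifestly signed form; everything else is a short continuity argument.
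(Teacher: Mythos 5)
Your ``direct argument'' is exactly the paper's proof: it shows $\Phi_1(p_{1,1})=-\gamma p_{1,1}\,i_1'(p_{1,1})\,j_1(p_{1,1})<0$ using Lemma \ref{chasman0_j1_j1diff} and the positivity of $i_1'$, notes $\Phi_1(0)=2a_0^2>0$ from the series expansions, and invokes the intermediate value theorem. One small inaccuracy: from the series $j_1=a_0z-a_1z^3+\cdots$ and $i_1=a_0z+a_1z^3+\cdots$ one gets $j_1'i_1-i_1'j_1=-4a_0a_1z^3+O(z^5)$, which is \emph{negative} (not positive) for small $z$; this is harmless because that term contributes $O(z^4)$ to $\Phi_1$ and is dominated by $2j_1'i_1'=2a_0^2+O(z^2)$, so your conclusion $\Phi_1>0$ near $0$ stands. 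Your preliminary Rayleigh-quotient argument is also fine as far as it goes, and you correctly observe it only bounds $\lambda_2$ rather than the $\ell=1$ branch $\mu^{(1)}$, hence your switch to the direct route.
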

\begin{proof}
$\mu^{(1)}$ is an eigenvalue with $\ell = 1$ if 
$\Phi_1(\mu^{(1)}) = 0$.
Now observe that 
$$\Phi_1(p_{1,1}) =- \gamma p_{1,1}i_1'(p_{1,1})j_1(p_{1,1}) < 0,$$
since $j_1(p_{1,1}) > 0$ and $i_1'$ is always positive.
Note also that $\Phi_1(0) = 2a_0^2$,
which means that $\Phi$ is positive near $z = 0$. Since $\mu^{(1)}$ is the smallest eigenvalue, it follows that $\mu^{(1)}<p_{1,1}$.
\end{proof}

\begin{lema}\label{j0_lemma}
We have $j_0 > 0$ on $(0,p_{1,1})$.
\end{lema}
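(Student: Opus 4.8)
The plan is to derive $j_0>0$ directly from the recurrence \eqref{jl_derivative_2} specialized to $\ell=1$, which reads
\begin{equation*}
    j_1'(z) = j_0(z) - \frac{n-1}{z}\,j_1(z).
\end{equation*}
Solving this for $j_0$ gives the identity
\begin{equation*}
    j_0(z) = j_1'(z) + \frac{n-1}{z}\,j_1(z),
\end{equation*}
valid for all $z>0$. So the task reduces entirely to controlling the sign of the two terms on the right-hand side on the interval $(0,p_{1,1})$.

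Next I would invoke Lemma \ref{chasman0_j1_j1diff}: on $(0,p_{1,1})$ it gives $j_1'(z)>0$, and since $p_{1,1}$ is only the \emph{smallest} zero of $j_1'$ while $j_1(z)>0$ for $z\le p_{1,1}$, we also have $j_1(z)>0$ throughout $(0,p_{1,1})$. Because $n\ge 2$ we have $(n-1)/z>0$ for every $z>0$, so both summands in the displayed identity are strictly positive on $(0,p_{1,1})$, whence $j_0(z)>0$ there, as claimed.

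The argument is essentially immediate once the correct recurrence is written down, so there is no substantial obstacle; the only points that require a moment's care are (i) checking that the index bookkeeping in \eqref{jl_derivative_2} indeed produces the coefficient $(n-1)/z$ when $\ell=1$ (recall $\ell+n-2=n-1$), and (ii) making sure the positivity of $j_1$ from Lemma \ref{chasman0_j1_j1diff} is used on the full open interval $(0,p_{1,1})$ rather than only at the endpoint. Both are routine.
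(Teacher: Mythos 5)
Your proof is correct and is essentially identical to the paper's: both rewrite the recurrence \eqref{jl_derivative_2} at $\ell=1$ as $j_0(z)=j_1'(z)+\tfrac{n-1}{z}j_1(z)$ and then invoke the positivity of $j_1$ and $j_1'$ on $(0,p_{1,1})$ from Lemma \ref{chasman0_j1_j1diff}. No discrepancies.
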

\begin{proof}
Use \eqref{jl_derivative_2} to write $$j_0(z) = j_1'(z) + \frac{n-1}{z}j_1(z).$$
The claim follows now from Lemma \ref{chasman0_j1_j1diff}.
\end{proof}

We are now ready to prove the main result in this Subsection:

\begin{teor}\label{thm:fundamental-mode}
The smallest non-zero eigenvalue $\mu^{(1)}$ on the unit ball corresponds to $\ell = 1$.
\end{teor}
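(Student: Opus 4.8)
The plan is to fix $\gamma>0$ (for $\gamma=0$ the problem reduces to the Neumann Laplacian squared, so the statement is then the classical fact that the lowest nonzero Neumann eigenvalue of the ball occurs at $\ell=1$) and to compare the smallest positive root of $\Phi_\ell$ over all projections $\ell\ge 0$. By Lemma~\ref{lambda1_p11} we already know $\mu^{(1)}<p_{1,1}$, so it is enough to show that $\Phi_\ell$ has no positive root in $(0,\mu^{(1)}]$ whenever $\ell\neq 1$; and since for $\ell\ge 1$ the function $\Phi_\ell$ is positive for $z>0$ small (its leading term at the origin is $2a_0^2\ell^2 z^{2\ell-2}$), for those $\ell$ it suffices to prove that $\Phi_\ell>0$ on $(0,\mu^{(1)})$. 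The case $\ell=0$ is disposed of directly: taking $\ell=0$ in \eqref{jl_derivative_1}--\eqref{il_derivative_1} gives $j_0'=-j_1$ and $i_0'=i_1$, whence
$$
\Phi_0(z)=-2\,j_1(z)i_1(z)-\gamma z\bigl(j_1(z)i_0(z)+i_1(z)j_0(z)\bigr),
$$
which is strictly negative on $(0,p_{1,1})\supseteq(0,\mu^{(1)}]$ because $j_1>0$ there by Lemma~\ref{chasman0_j1_j1diff}, $j_0>0$ there by Lemma~\ref{j0_lemma}, and $i_0,i_1>0$ on $(0,\infty)$; hence no $\ell=0$ eigenvalue is $\le\mu^{(1)}$.

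For $\ell\ge 2$ one works on $(0,\mu^{(1)})\subset(0,p_{1,1})$, where $j_\ell,j_\ell'>0$ (the first positive zero of $j_\ell'$ is at least $p_{1,1}$, by the classical fact that the lowest nonzero Neumann eigenvalue of the ball is attained at $\ell=1$) and $i_\ell,i_\ell'>0$. On that interval one may factor
$$
\Phi_\ell(z)=j_\ell'(z)\,i_\ell'(z)\,\bigl(2-\gamma z\,D_\ell(z)\bigr),\qquad D_\ell(z):=\frac{j_\ell(z)}{j_\ell'(z)}-\frac{i_\ell(z)}{i_\ell'(z)}>0,
$$
so that $\Phi_\ell>0$ on $(0,\mu^{(1)})$ is equivalent to $\gamma z\,D_\ell(z)<2$ there. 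The same factorisation for $\ell=1$, together with the fact that $\mu^{(1)}$ is the \emph{first} positive root of $\Phi_1$ and $\Phi_1>0$ near the origin, already yields $\gamma z\,D_1(z)<2$ on $(0,\mu^{(1)})$; hence it suffices to prove the $\ell$-monotonicity $D_\ell(z)\le D_1(z)$ for $z\in(0,\mu^{(1)})$ and $\ell\ge 2$. Writing $\rho_\ell:=j_{\ell+1}/j_\ell$ and $\sigma_\ell:=i_{\ell+1}/i_\ell$, relations \eqref{jl_derivative_1}--\eqref{il_derivative_1} give $j_\ell'/j_\ell=\ell/z-\rho_\ell$, $i_\ell'/i_\ell=\ell/z+\sigma_\ell$ and
$$
D_\ell(z)=\frac{\rho_\ell(z)+\sigma_\ell(z)}{\bigl(j_\ell'(z)/j_\ell(z)\bigr)\bigl(i_\ell'(z)/i_\ell(z)\bigr)},
$$
so the claim would follow from showing that, on $(0,p_{1,1})$, $\ell\mapsto\rho_\ell(z),\sigma_\ell(z)$ are non-increasing while $\ell\mapsto j_\ell'(z)/j_\ell(z)$ and $\ell\mapsto i_\ell'(z)/i_\ell(z)$ are non-decreasing. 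These are monotonicity properties of ratios of consecutive (modified) ultraspherical Bessel functions; they can be obtained from the three-term recurrences together with a Sturm/Riccati comparison in the parameter $\nu_\ell=\ell(\ell+n-2)$ entering the radial equations $j_\ell''+\tfrac{n-1}{z}j_\ell'+(1-\tfrac{\nu_\ell}{z^2})j_\ell=0$ and $i_\ell''+\tfrac{n-1}{z}i_\ell'-(1+\tfrac{\nu_\ell}{z^2})i_\ell=0$, and are consistent with the expansion $D_\ell(z)=\tfrac{2}{\ell^2(2\ell+n)}\,z^3+O(z^5)$ near $z=0$. Combining the three cases then shows that the smallest positive eigenvalue of \eqref{problem-ball} equals $\mu^{(1)}$ and is attained at $\ell=1$.

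The routine parts of this argument are the $\ell=0$ computation and the factorisation of $\Phi_\ell$. The main obstacle is the $\ell$-monotonicity $D_\ell\le D_1$ on $(0,\mu^{(1)})$: the two terms $j_\ell/j_\ell'$ and $i_\ell/i_\ell'$ constituting $D_\ell$ are \emph{each} strictly decreasing in $\ell$, so the inequality for their difference is a genuine cancellation that cannot be read off term by term. In particular, establishing that $\ell\mapsto i_\ell'/i_\ell=\ell/z+\sigma_\ell$ is non-decreasing, where the growth of $\ell/z$ must outweigh the decay of $\sigma_\ell$, is the most delicate point and is precisely where the detailed Bessel-function estimates are needed.
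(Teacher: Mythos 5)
Your treatment of $\ell=0$ matches the paper's exactly: you rewrite $\Phi_0$ via the recurrences \eqref{jl_derivative_1}--\eqref{il_derivative_1} and show it is strictly negative on $(0,p_{1,1})$ using Lemmas~\ref{chasman0_j1_j1diff} and \ref{j0_lemma}, which combined with Lemma~\ref{lambda1_p11} rules out $\ell=0$. For $\ell\ge 2$, however, your route is substantially different from the paper's, and it contains a genuine gap. You factor
\[
\Phi_\ell(z)=j_\ell'(z)\,i_\ell'(z)\bigl(2-\gamma z\,D_\ell(z)\bigr),\qquad D_\ell=\frac{j_\ell}{j_\ell'}-\frac{i_\ell}{i_\ell'},
\]
and reduce the claim to $D_\ell(z)\le D_1(z)$ on $(0,\mu^{(1)})$; you then rewrite $D_\ell=(\rho_\ell+\sigma_\ell)/\bigl((j_\ell'/j_\ell)(i_\ell'/i_\ell)\bigr)$ and assert that $\rho_\ell,\sigma_\ell$ are non-increasing in $\ell$ while $j_\ell'/j_\ell$ and $i_\ell'/i_\ell$ are non-decreasing in $\ell$. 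This is exactly the step you flag at the end as ``precisely where the detailed Bessel-function estimates are needed,'' and it is left unproved: a vague appeal to a Sturm/Riccati comparison in the parameter $\nu_\ell=\ell(\ell+n-2)$ is offered but not carried out. Note in particular that, as you yourself observe, each of the two terms $j_\ell/j_\ell'$ and $i_\ell/i_\ell'$ composing $D_\ell$ is individually decreasing in $\ell$, so the desired inequality for their difference is a cancellation that really does require work; and even after your reformulation, the monotonicity of $i_\ell'/i_\ell=\ell/z+\sigma_\ell$ in $\ell$ pits the growing term $\ell/z$ against the (allegedly) shrinking $\sigma_\ell$, which is not obvious. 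As it stands the argument for $\ell\ge 2$ is a plausible program, not a proof.

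The paper disposes of $\ell\ge 2$ by a much cheaper variational argument: for a fixed radial profile $R$ (with $R'(1)=0$), the Rayleigh quotient
\[
E_\gamma[R\mathcal Y_\ell]=\frac{\displaystyle\int_0^1\Bigl(R''+\tfrac{n-1}{r}R'-\tfrac{\ell(\ell+n-2)}{r^2}R\Bigr)^2 r^{n-1}\,dr}{\displaystyle\int_0^1 R^2 r^{n-1}\,dr+\gamma|\mathbb S^{n-1}|R(1)^2}
\]
has an $\ell$-dependent contribution in the numerator that is increasing in $\ell\ge 1$, so the minimizer over $\ell\ge 1$ must occur at $\ell=1$; this yields $\mu^{(\ell)}\ge\mu^{(1)}$ for every $\ell\ge 2$ with no Bessel estimates whatsoever. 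If you want to keep your Bessel-function route, you would have to actually establish the four monotonicity statements on $(0,p_{1,1})$ (or directly prove $D_\ell\le D_1$ there); otherwise, replacing your $\ell\ge 2$ step by the Rayleigh-quotient monotonicity argument closes the gap and is considerably shorter.
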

\begin{proof} We follow \cite{Chasman} and divide the proof in two stages. We show first that, for any fixed smooth radial function $R$ and $\ell \geq 1$, the energy $E_\gamma[R\mathcal{Y}_\ell]$ is minimized for $\ell = 1$. Then we show that, among $\ell = 1$ and $\ell = 0$, the smallest non-zero eigenvalue corresponds to $\ell = 1$.

The first part is immediate in our case. Indeed,
$$E_\gamma[R\mathcal{Y}_\ell] = \frac{\int_{\mathbb B_1}\Big(R''+\frac{n-1}{r}R' - \frac{R}{r^2}\ell(\ell+n-2)\Big)^2dr}{\int_{\mathbb B_1} R^2 r^{n-1}\,dr+\gamma |\mathbb S^{n-1}|R(1)^2},$$
and thus there is only one term in the numerator which depends on $\ell$. This term is increasing with $\ell$, and hence $E_\gamma[R\mathcal{Y}_\ell]$ reaches its minimum at $\ell = 1$.

We only need to look at the cases $\ell = 1$ and $\ell = 0$ now. Let $\mu^{(0)}$ be the smallest non-zero eigenvalue corresponding to $\ell = 0$. Due to Lemma \ref{lambda1_p11}, it suffices to show that $p_{1,1}<\mu^{(0)}$.

Clearly, for $\ell = 0$, one has $\Phi_0(0) = 0$. By using \eqref{jl_derivative_1} and \eqref{il_derivative_1} we can rewrite 
$$\Phi_0(z) = -2j_0(z)i_0(z) -\gamma z\big(j_1(z)i_0(z) + i_1(z)j_0(z)\big).$$
From Lemmas \ref{chasman0_j1_j1diff} and \ref{j0_lemma}, and by using the fact that $i_\ell$ and all its derivatives are always positive we find that $\Phi_0 < 0$ on $(0,p_{1,1})$. Hence $p_{1,1}<\mu^{(0)}$ and $\mu^{(1)}<\mu^{(0)}$, as claimed.
\end{proof}

\section{The eigenvalue problem in the annulus}\label{section:annulus}

In this section we aim to characterize the eigenfunctions and eigenvalues of our problem in the annulus $\Omega_a=\{x\in\mathbb{R}^n: a<|x|<1\}$ for each $0< a <1$. In particular we will consider 
\begin{equation}\label{problem-annulus}
    \begin{cases}
    \Delta^2 u = \lambda^4 u  &\mbox{in } \Omega_a, \\
    \partial_\nu u = 0  &\mbox{on } \partial\Omega_a, \\
    -\partial_\nu(\Delta u) = \gamma \lambda^4 u  &\mbox{on } \partial\Omega_a.
    \end{cases}
\end{equation}

We first look at  eigenfunctions correspoding to the zero-eigenvalue:

\begin{prop}
Eigenfunctions of \eqref{problem-annulus} for $\lambda=0$ are just the constants.
\end{prop}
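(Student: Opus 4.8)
The plan is to avoid the explicit description of biharmonic functions on the annulus and instead run the short energy argument that underlies Remark \ref{remark-lambda0}. Suppose $u$ is an eigenfunction of \eqref{problem-annulus} with $\lambda=0$, so that the associated eigenvalue of $T$ is $\lambda^4=0$. Testing the weak form of the eigenvalue equation against $u$ itself (equivalently, observing that the Rayleigh quotient \eqref{eq:energy} vanishes on $u$) gives $\int_{\Omega_a}|\Delta u|^2=0$, hence $\Delta u\equiv 0$ in $\Omega_a$.

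Next I would invoke that $u\in H^2_\ast(\Omega_a)$, which forces $\partial_\nu u=0$ on the \emph{whole} boundary $\partial\Omega_a$, i.e.\ on both components $\{|x|=a\}$ and $\{|x|=1\}$. Green's first identity then gives
$$\int_{\Omega_a}|\nabla u|^2=\int_{\partial\Omega_a}u\,\partial_\nu u-\int_{\Omega_a}u\,\Delta u=0,$$
so $\nabla u\equiv 0$; since $\Omega_a$ is connected, $u$ is constant. The third boundary condition $\partial_\nu(\Delta u)=0$ is then automatic, and conversely every constant solves \eqref{problem-annulus} with $\lambda=0$, which closes the argument.

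There is no serious obstacle here; the only points requiring a moment's care are that the zero-Neumann condition must be used on the full (disconnected) boundary and that connectedness of $\Omega_a$ is what promotes $\nabla u\equiv 0$ to $u\equiv\mathrm{const}$. The tempting alternative --- projecting onto spherical harmonics, writing each radial part $u_\ell$ as a combination of $r^\ell,\,r^{\ell+2},\,r^{-(\ell+n-2)},\,r^{-(\ell+n-4)}$ (with the logarithmic modifications in the resonant cases, as in \eqref{zero-eigenfunctions}) and imposing the four boundary conditions $u_\ell'(a)=u_\ell'(1)=(\Delta u_\ell)'(a)=(\Delta u_\ell)'(1)=0$ --- also works but buries the statement under case analysis, so I would keep the energy proof.
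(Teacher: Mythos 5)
Your proof is correct, and it takes a genuinely different route from the paper's. The paper projects onto spherical harmonics, writes out the radial biharmonic solutions case by case (including the resonant $n=2$ and $n=4$ cases from Remark~\ref{remark-lambda0}), imposes the four boundary conditions as a $4\times4$ linear system, and computes its determinant explicitly, identifying the one degenerate case $\ell=0$ where the rank still rules out nonconstant solutions. You instead deduce from the $\lambda=0$ Rayleigh quotient (or, equivalently, testing the weak form \eqref{eq:variationeq} against $u$) that $\Delta u\equiv 0$, then use the zero-Neumann condition on the full disconnected boundary together with Green's identity to get $\nabla u\equiv 0$, and close with connectedness of $\Omega_a$. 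Your argument is shorter, avoids the case bookkeeping entirely, and in fact applies verbatim to any bounded connected $\Omega$ with $\partial\Omega\in C^2$, not just the annulus. What you lose is the explicit structural information about biharmonic solutions that the paper's computation builds, which is then reused (with Bessel functions in place of power functions) for the nonzero eigenvalues in \eqref{det-W}; the paper's proof of this proposition doubles as a warm-up for that calculation, so the redundancy is not accidental. If the goal is just to establish the stated proposition, your energy argument is the cleaner choice.
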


\begin{proof}
Recalling Remark \ref{remark-lambda0}, we know that after projection over spherical harmonics, solutions to $\Delta^ 2 u = 0$  are of the form given by expression \eqref{zero-eigenfunctions}.
 Imposing boundary conditions in the first three cases yields the constants as the only possible solution. Assuming then that we are in the fourth case, direct calculations show that
$$\partial_r\Delta u_\ell = 2C\ell(2\ell+n)r^{\ell-1} + 2D(2\ell+n-4)(\ell+n-2)r^{-(\ell+n-1)}.$$
Thus imposing boundary conditions yields the system of equations
\begin{equation*}
\begin{pmatrix}
 \ell & 2-\ell - n  & \ell+2  & 4-\ell-n\\ 
 \ell a^{\ell-1}& (2-\ell - n)a^{-(\ell+n-1)}  & (\ell+2)a^{\ell+1}  & (4-\ell-n)a^{-(\ell + n-3)}\\
0&  0&  \ell(2\ell+n)&(2\ell+n-4)(\ell + n-2) \\
0 &  0 & \ell(2\ell+n)a^{\ell-1}&(2\ell+n-4)(\ell + n-2)a^{-(\ell+n-1)}
\end{pmatrix}
\begin{pmatrix}
A \\ B \\ C \\ D
\end{pmatrix} = \begin{pmatrix}
0 \\ 0 \\ 0 \\ 0
\end{pmatrix}.
\end{equation*}
In order to have non-trivial solutions, the matrix determinant needs to be equal to 0. But  this determinant reduces to
$$-(a^{\ell-1}-a^{-(\ell+n-1)})^2\ell^2(2\ell+n)(2\ell+n-4)(\ell+n-2)^2.$$
Note that we assumed that $2\ell + n\neq 4$ and $\ell + n > 2$. Hence the only possibility for this determinant to be zero is setting $\ell = 0$ when $n\neq 2,4$. But, in this case, the rank of this matrix is three, so the eigenspace is still one-dimensional,
as desired.
\end{proof}

\begin{teor}
Non-zero eigenvalues of problem \eqref{problem-annulus} are given by solutions of the equation
\begin{equation}\label{det-W}
    \det W_\ell(\lambda) = 0,
\end{equation}
where $W_\ell$ is the matrix given in \eqref{big-matrix}, for each $\ell=0,1,\ldots$
\end{teor}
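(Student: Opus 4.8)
The plan is to mimic the ball case (Theorem \ref{thm:general}) but now keeping \emph{all four} radial solutions, since the annulus does not contain the origin and hence there is no reason to discard the singular solutions $y_\ell$ and $k_\ell$. First I would recall from the beginning of Section \ref{section:ball} that, after projecting onto the spherical harmonic $\mathcal Y_\ell$ of order $\ell$, any solution of $\Delta^2 u=\lambda^4 u$ with $\lambda>0$ has the form
\begin{equation*}
u_\ell(r)=A\,j_\ell(\lambda r)+B\,y_\ell(\lambda r)+C\,i_\ell(\lambda r)+D\,k_\ell(\lambda r),
\end{equation*}
where the four ultraspherical (modified) Bessel functions $j_\ell,y_\ell,i_\ell,k_\ell$ form a fundamental system on $(a,1)$. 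Since $0<a<1$ all four are smooth on the closed annulus, so no coefficient is forced to vanish a priori; the eigenvalue condition will come entirely from the four boundary conditions.

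Next I would impose the four scalar boundary conditions coming from \eqref{problem-annulus}: $\partial_r u_\ell=0$ at $r=a$ and $r=1$, and $-\partial_r(\Delta u_\ell)=\gamma\lambda^4 u_\ell$ at $r=a$ and $r=1$. Using the spectral identity $\Delta(f_\ell(\lambda r)\mathcal Y_\ell)=\pm\lambda^2 f_\ell(\lambda r)\mathcal Y_\ell$ (with $+$ for $i_\ell,k_\ell$ and $-$ for $j_\ell,y_\ell$), the third-order boundary operator $-\partial_r\Delta$ acts on each basis function as $\mp\lambda^3 f_\ell'(\lambda r)$, so each of the four conditions is linear in $(A,B,C,D)$ with coefficients built out of $\lambda f_\ell'(\lambda a)$, $\lambda f_\ell'(\lambda)$ and the values $f_\ell(\lambda a)$, $f_\ell(\lambda)$. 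Assembling these into a $4\times 4$ matrix $W_\ell(\lambda)$ — whose explicit entries are recorded in \eqref{big-matrix} — the system $W_\ell(\lambda)(A,B,C,D)^{\mathsf T}=0$ has a nontrivial solution, hence an eigenfunction, if and only if $\det W_\ell(\lambda)=0$. Conversely any $\lambda>0$ solving $\det W_\ell(\lambda)=0$ for some $\ell$ yields a nonzero vector in $\ker W_\ell(\lambda)$ and therefore a genuine (smooth, by the regularity estimates \eqref{estimate1}–\eqref{estimate2} from Section \ref{section:setup}) eigenfunction of \eqref{problem-annulus}. Summing over $\ell$ and over an orthonormal basis of order-$\ell$ spherical harmonics exhausts the spectrum, because the eigenfunctions of the self-adjoint operator $T$ restricted to each spherical harmonic sector are complete.

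The routine but slightly delicate point is bookkeeping: one must be careful that the $\lambda^4$ on the boundary (rather than $\lambda^3$) is handled correctly, i.e.\ that the factor $\gamma\lambda^4 u_\ell=\gamma\lambda^4 f_\ell(\lambda r)$ is matched against $-\partial_r\Delta u_\ell=\pm\lambda^3 f_\ell'(\lambda r)$, producing a $\gamma\lambda$ weight exactly as in \eqref{gammaeq}. I expect the main (minor) obstacle to be purely organizational — writing $W_\ell$ cleanly and verifying that no spurious solutions are introduced by an overall power of $\lambda$ when one clears denominators — rather than anything conceptually hard; there is no analogue here of the "discard singular solutions" step, which is precisely why the annulus sees richer behaviour (the $\ell=0$ bifurcation from zero discussed in the introduction). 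A final remark worth including is that the same $W_\ell$, after the substitutions $j_\ell\!\to\!\text{(power functions)}$, recovers the $\lambda=0$ matrix of the previous proposition, so the two statements are consistent.
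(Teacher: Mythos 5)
Your proposal follows essentially the same argument as the paper: project onto spherical harmonics, keep all four ultraspherical Bessel solutions since the annulus excludes the origin, impose the four scalar boundary conditions at $r=a$ and $r=1$, and read off a nontrivial-kernel condition as $\det W_\ell(\lambda)=0$. The bookkeeping you flag (the factor $\gamma\lambda$ and the signs $\pm\lambda^3 f_\ell'(\lambda r)$ for the Bessel versus modified Bessel families) is exactly what the paper's matrix \eqref{big-matrix} records, so the approaches coincide.
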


\begin{proof}
We will follow the same steps as in the proof of Theorem \ref{thm:general}, for $\lambda\neq 0$. More precisely, after projection over spherical harmonics, we use ultraspherical Bessel functions to write the radial coordinate of the eigenfunction
\begin{equation}\label{eigenfunction-l}
    u_\ell (r) = A_\ell j_\ell(\lambda r) + B_\ell y_\ell(\lambda r) + C_\ell i_\ell(\lambda r) + D_\ell k_\ell(\lambda r),
\end{equation}
for some constants $A,B,C,D$, which will be fixed from the boundary conditions. Imposing the zero-Neumann and the third order conditions at both boundaries yields the system of equations, in matrix form,
%
%
\begin{equation*}
    W_\ell(\lambda) \cdot
\begin{pmatrix}
A \\ B \\ C \\ D
\end{pmatrix} = \begin{pmatrix}
0 \\ 0 \\ 0 \\ 0
\end{pmatrix} 
\end{equation*}
where we have used $W_\ell (\lambda)$ to denote the matrix
\begin{equation}\label{big-matrix} \raggedleft
\begin{pmatrix}
j^\prime_\ell(\lambda a) &  y^\prime_\ell(\lambda a) &  i^\prime_\ell(\lambda a) & k^\prime_\ell(\lambda a) \\ 
j^\prime_\ell(\lambda ) &  y^\prime_\ell(\lambda ) &  i^\prime_\ell(\lambda ) & k^\prime_\ell(\lambda ) \\
j^\prime_\ell(\lambda a) - \gamma\lambda j_\ell(\lambda a) &  y^\prime_\ell(\lambda a) - \gamma\lambda y_\ell(\lambda a) &  -i^\prime_\ell(\lambda a) -\gamma\lambda i_\ell(\lambda a)& -k^\prime_\ell(\lambda a) -\gamma\lambda k_\ell(\lambda a)\\
j^\prime_\ell(\lambda ) + \gamma\lambda j_\ell(\lambda ) &  y^\prime_\ell(\lambda ) + \gamma\lambda y_\ell(\lambda ) &  -i^\prime_\ell(\lambda ) +\gamma\lambda i_\ell(\lambda )& -k^\prime_\ell(\lambda a) +\gamma\lambda k_\ell(\lambda )\\
\end{pmatrix}
\end{equation}
Non-trivial solutions exist when its determinant vanishes, which is precisely condition \eqref{det-W}. This completes the proof of the Theorem.\\
\end{proof}

Now we would like to study the behavior of the eigenvalues  by looking at the solutions of equation \eqref{det-W} depending on the size of the annulus (the parameter $a$), for each $\gamma$. 
In order to illustrate the general picture, let us start with dimension   $n =2$.
In this case, $s = 0$ and the ultraspherical Bessel functions become the usual ones. Hence the matrix $W_\ell(\lambda)$ reduces to
\begin{equation*}
\begin{pmatrix}
J^\prime_\ell(\lambda a) &  Y^\prime_\ell(\lambda a) &  I^\prime_\ell(\lambda a) & K^\prime_\ell(\lambda a) \\
J^\prime_\ell(\lambda) &  Y^\prime_\ell(\lambda) &  I^\prime_\ell(\lambda) & K^\prime_\ell(\lambda) \\
J^\prime_\ell(\lambda a) - \gamma\lambda J_\ell(\lambda a) &  Y^\prime_\ell(\lambda a) - \gamma\lambda Y_\ell(\lambda a) &  -I^\prime_\ell(\lambda a) -\gamma\lambda I_\ell(\lambda a)& -K^\prime_\ell(\lambda a) -\gamma\lambda K_\ell(\lambda a)\\
J^\prime_\ell(\lambda) + \gamma\lambda J_\ell(\lambda) &  Y^\prime_\ell(\lambda) +\gamma\lambda Y_\ell(\lambda) &  -I^\prime_\ell(\lambda) +\gamma\lambda I_\ell(\lambda)& -K^\prime_\ell(\lambda) +\gamma\lambda K_\ell(\lambda)\\
\end{pmatrix}
\end{equation*}
Imposing equation \eqref{det-W} yields Figure \ref{n2_annulus} below for the eigenvalues. \begin{figure}[H]
    \centering
    \includegraphics[width = \textwidth]{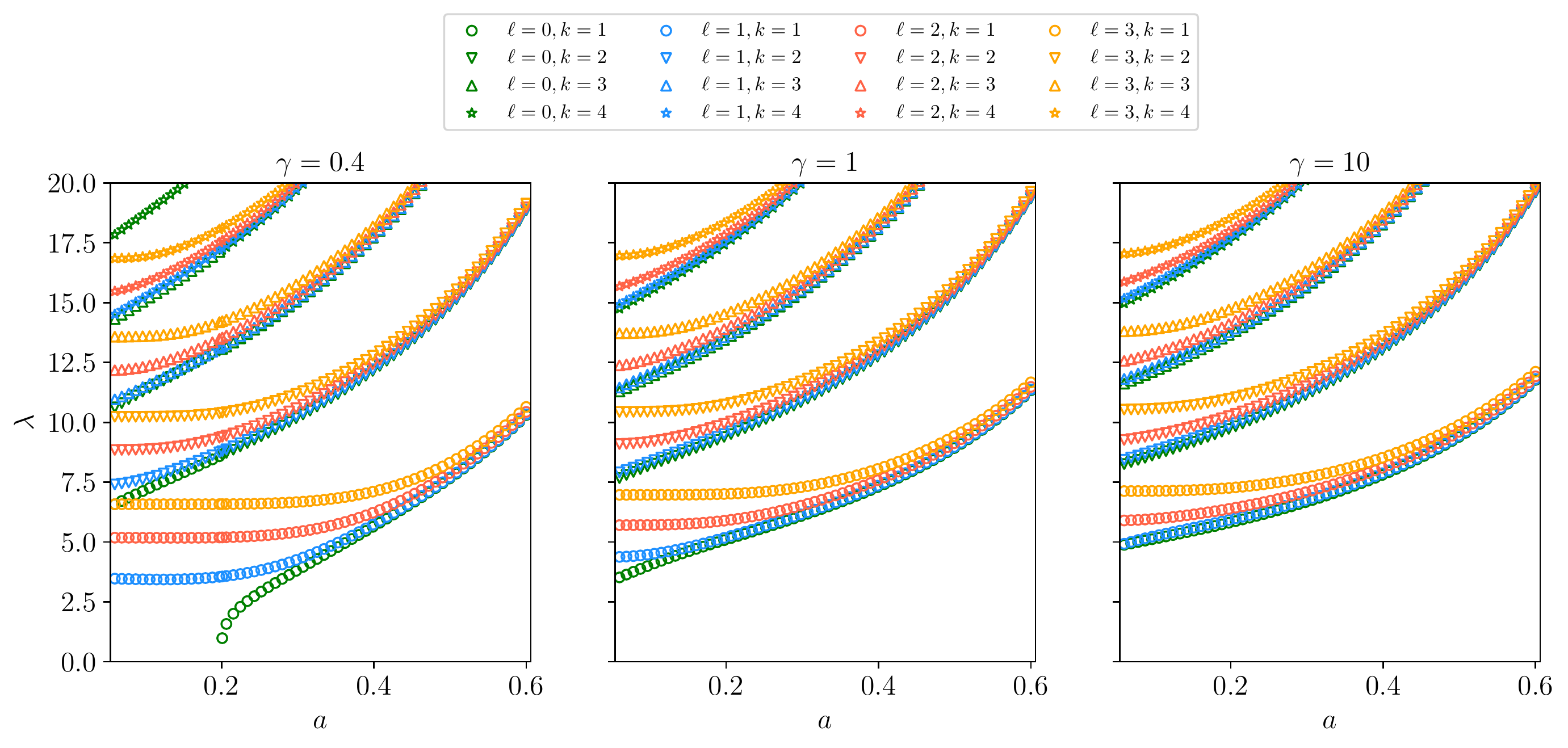}
    \caption{First four eigenvalues ($k = 1,2,3,4$) for $\ell = 0,1,2,3$ and different values of $\gamma$ and $a$ (in dimension $n = 2$).}
    \label{n2_annulus}
\end{figure}
Note that for small values of $\gamma$ ($\gamma = 0.4$ in Figure \ref{n2_annulus}) a bifurcation from $\lambda = 0$ appears for a specific value of $a$. To understand why this happens, we also plot in Figure \ref{n2_annulus_det} $\det W_\ell(\lambda)$ for different values of $\ell$ and observe that in the $\ell = 0$ case, $\det W_0(0)$ changes sign in a neighborhood of $a \approx 0.2$. Whenever $\det W_0(0)$ is positive, there exists a solution for $\ell = 0$ with a non-zero eigenvalue which is smaller than the smallest $\ell = 1$ eigenvalue. However, when $\det W_0(0)$ is negative, the smallest non-zero eigenvalue corresponds to $\ell = 1$. 
\begin{figure}[ht]
    \centering
    \includegraphics[width = 0.85\textwidth]{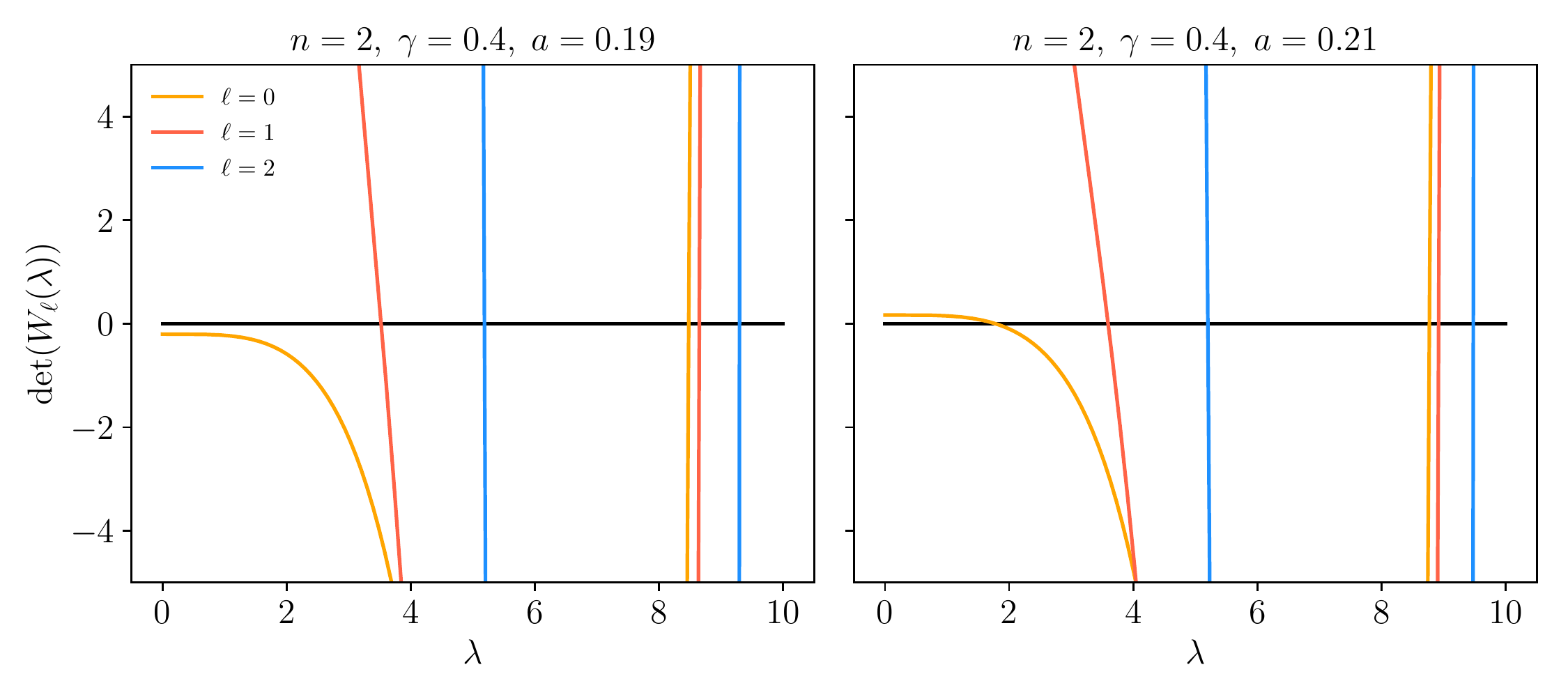}
    \caption{$\det W_\ell(\lambda)$ as a function of $\lambda$ for $n = 2, \gamma =0.4,\ell = 0,1,2$ for the cases $a = 0.19$ and $a=0.21$.}
    \label{n2_annulus_det}
\end{figure}

According to this discussion, finding when the determinant is equal to zero should tell us exactly when the bifurcation happens. To see this, 
use the asymptotic expansions of the Bessel functions expansions for $\ell = 0$ to  obtain that 
$$\lim_{\lambda\rightarrow 0^+} \det W_0(\lambda) = \frac{2(a+1)^2(a-1)(a+2\gamma - 1)}{\pi a^2}$$
and, hence, the bifurcation should happen when $a +2\gamma - 1 = 0$.  For example, when $\gamma = 0.4$ the bifurcation occurs at $a = 0.2$ which agrees with the plot. This bifurcation exists as long as $0<1-2\gamma < 1$. 

In the case $n\geq3$ we can follow a similar procedure. Using the expansions for the ultraspherical Bessel functions we have instead
\begin{align*}\det W_0(\lambda) = \frac{2\pi s^2(a^{2s+2}-1)\big(a^{2s+2}+2\gamma (s+1) a^{2s+1}+2\gamma(s+1)-1\big)}{a^{4s+2}(s+1)^2\Gamma(1+s)^2\Gamma(1-s)^2\sin^2(\pi s)}\lambda^{-4s}+o(\lambda)\\=\frac{2(a^{2s+2}-1)\big(a^{2s+2}+2\gamma (s+1) a^{2s+1}+2\gamma(s+1)-1\big)}{\pi a^{4s+2}(s+1)^2}\lambda^{-4s}+o(\lambda),\end{align*}
 By a similar argument, the bifurcation appears when $$a^{2s+2}+2\gamma (s+1) a^{2s+1}+2\gamma(s+1)-1=0$$
or, recalling the definition of $s$, when
$$F(a):=\frac{1-a^{n}}{1+a^{n-1}}=\gamma n.$$
Since the function $F(a)$ is strictly decreasing for $a\in(0,1)$, $F(0) = 1$ and $\lim_{a\rightarrow 1^-}F(a) = 0$,  this equation has a unique solution if and only if $\gamma n < 1$. Hence there exists a bifurcation as long as $0<\gamma < \frac{1}{n}$ (See Figure \ref{n4_annulus} for $\gamma = 0.2$). Note, finally, that in the limit $s\rightarrow 0$ we recover the previous results on dimension $n=2$.

\begin{figure}[H]
    \centering
    \includegraphics[width = \textwidth]{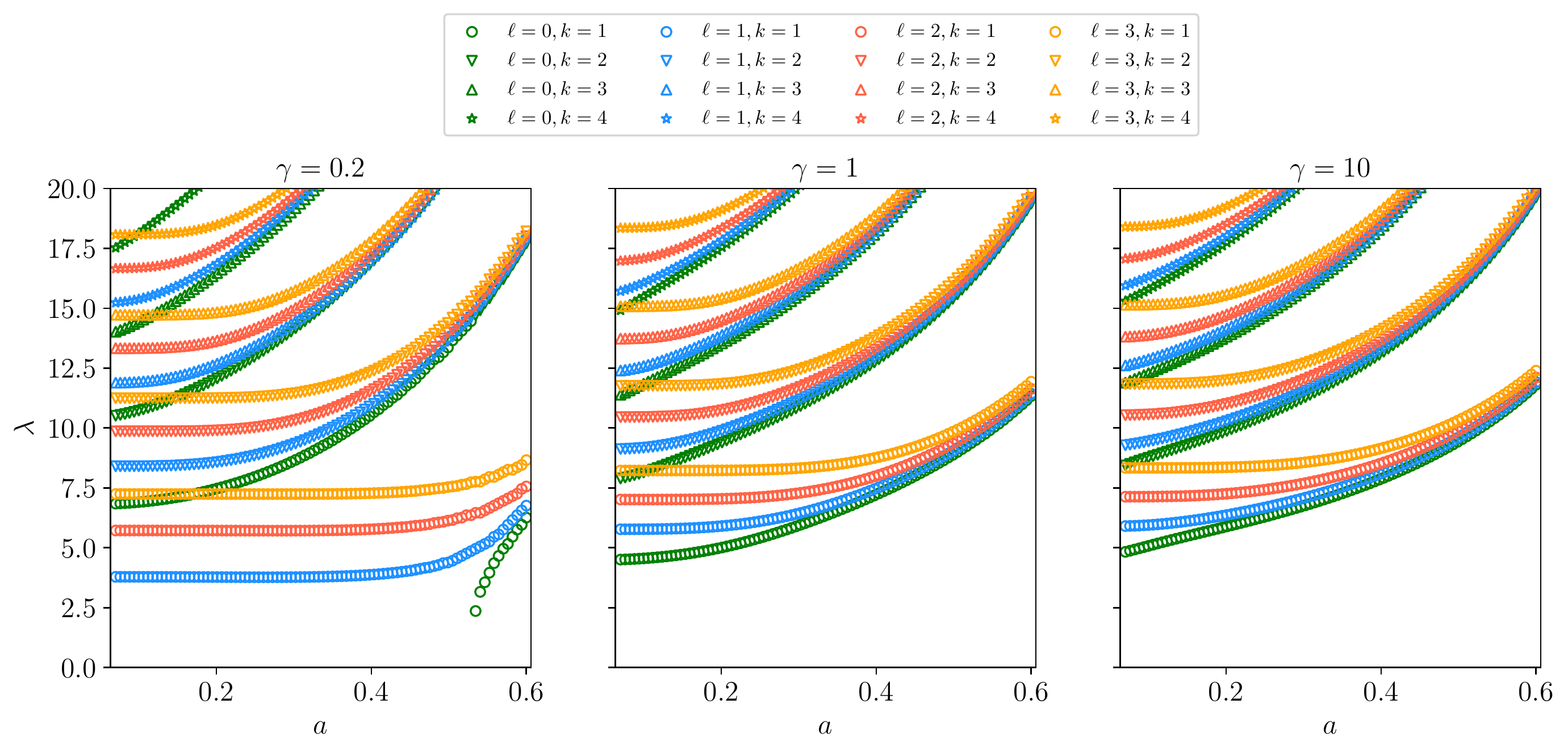}
    \caption{First four eigenvalues ($k = 1,2,3,4$) for $\ell = 0,1,2,3$ and different values of $\gamma$ and $a$ ($n = 4$).}
    \label{n4_annulus}
\end{figure}

We summarize some of our findings  below, as seen in Figure \ref{n2_annulus} (for dimension $n=2$) and Figure \ref{n4_annulus} (for $n=4$):

\begin{prop} Assume that $n\geq 2$ and $\gamma\in(0,\tfrac{1}{n})$. There exists $a_*\in(0,1)$ such that,
or $a_*<a<1$, the eigenfunction corresponding to the lowest non-zero eigenvalue is radially symmetric, while if $0<a<a_*$, the  eigenfunction is not radially symmetric but corresponds to the mode $\ell=1$. For $n=2$, $a_*=1-2\gamma$.
\end{prop}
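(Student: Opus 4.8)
The plan is to reduce the identification of the fundamental mode to a comparison between the modes $\ell=0$ and $\ell=1$, and then to locate the threshold $a_*$ from the small-$\lambda$ behaviour of $\det W_0$. By the theorem characterizing the eigenvalues of \eqref{problem-annulus} through \eqref{det-W}, for each $\ell$ the positive eigenvalues of mode $\ell$ are the positive zeros of $\det W_\ell$, and the lowest non-zero eigenvalue is $\min_{\ell\ge0}\mu^{(\ell)}$, where $\mu^{(\ell)}$ denotes the smallest positive zero of $\det W_\ell$ (for $\ell=0$ we discard $\lambda=0$, whose only eigenfunctions are the constants). The first step is to prove $\mu^{(1)}\le\mu^{(\ell)}$ for all $\ell\ge1$, so that the smallest non-radial eigenvalue always occurs at $\ell=1$. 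This follows from the variational argument of Theorem~\ref{thm:fundamental-mode}: on a trial function $R(r)\mathcal Y_\ell(\theta)$ the only $\ell$-dependence of the Rayleigh quotient sits in $\int_a^1\bigl(\Delta_{\mathrm{rad}}R-\tfrac{\ell(\ell+n-2)}{r^2}R\bigr)^2r^{n-1}\,dr$; expanding the square and integrating the mixed term by parts, the boundary contributions at $r=a$ and $r=1$ drop out thanks to the Neumann conditions $R'(a)=R'(1)=0$, and what remains is non-decreasing in $c=\ell(\ell+n-2)$ along the values $\{\ell(\ell+n-2):\ell\ge1\}$. Hence $\min_{\ell\ge1}\mu^{(\ell)}=\mu^{(1)}$ and the fundamental tone equals $\min\{\mu^{(0)},\mu^{(1)}\}$.

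It then remains to compare $\mu^{(0)}$ with $\mu^{(1)}$. For this I would use the small-argument asymptotics of the ultraspherical Bessel functions to expand $\det W_0(\lambda)$ for $\lambda>0$ small, exactly as in the discussion preceding the statement: the leading term is a nonzero multiple of $\lambda^{-4s}$ whose coefficient changes sign precisely when $G(a):=a^n-1+\gamma n(a^{n-1}+1)$ does, since the prefactor $a^n-1$ keeps a fixed (negative) sign on $(0,1)$. Writing $G(a)=0$ as $F(a)=\gamma n$ with $F(a):=\tfrac{1-a^n}{1+a^{n-1}}$, and recalling that $F$ is strictly decreasing on $(0,1)$ with $F(0^+)=1$ and $F(1^-)=0$, the hypothesis $\gamma n<1$ gives a unique $a_*\in(0,1)$ with $F(a_*)=\gamma n$; consequently $\det W_0(\lambda)$ keeps one sign near $\lambda=0$ when $a<a_*$ and the opposite sign when $a>a_*$. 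For $n=2$ one has $F(a)=1-a$, hence $a_*=1-2\gamma$, recovering the explicit value.

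Finally I would turn this sign flip at $a_*$ into the statement about the fundamental mode. The key auxiliary fact is that $\det W_0$ has a definite sign $\sigma$, independent of $a$, just below the first $\ell=1$ eigenvalue $\mu^{(1)}$: evaluating $\det W_0$ there, using $\det W_1(\mu^{(1)})=0$ together with the recurrences \eqref{jl_derivative_1}, \eqref{il_derivative_1}, \eqref{jl_derivative_2} and the positivity of $i_\ell,k_\ell$ and their derivatives (in the spirit of Lemmas~\ref{chasman0_j1_j1diff}--\ref{j0_lemma}), one identifies $\sigma$ with the sign of $\det W_0$ near $\lambda=0$ in the regime $a<a_*$. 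Then, for $a<a_*$ the function $\det W_0$ keeps the sign $\sigma$ throughout $(0,\mu^{(1)})$, so it has no zero there and the fundamental mode is $\ell=1$; for $a>a_*$ the sign of $\det W_0$ near $\lambda=0$ is $-\sigma$ while near $\mu^{(1)}$ it is $\sigma$, so the intermediate value theorem produces a zero of $\det W_0$ in $(0,\mu^{(1)})$, i.e.\ a radial eigenvalue that has bifurcated off $\lambda=0$ and lies strictly below $\mu^{(1)}$, making the fundamental mode radial. The main obstacle is precisely this last step: the asymptotic expansion only controls $\det W_0$ in a neighbourhood of $\lambda=0$, so one must rule out extra sign changes on all of $(0,\mu^{(1)})$, which requires an elementary but somewhat delicate analysis of $\det W_0$ via the Bessel recurrences and the monotonicity of the modified Bessel functions; a milder difficulty is the control of the boundary terms in the integration by parts of the first step when $n$ is small, where one may instead argue directly with the ultraspherical Bessel functions as in Theorem~\ref{thm:fundamental-mode}.
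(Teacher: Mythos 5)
Your strategy matches what the paper actually does, but you should be aware that the paper does not prove this proposition in the rigorous sense you are aiming for: the statement is introduced with the words ``We summarize some of our findings below, as seen in Figure~\ref{n2_annulus} (for dimension $n=2$) and Figure~\ref{n4_annulus} (for $n=4$)'', so it is a record of a numerical observation supported by the small-$\lambda$ expansion of $\det W_0$. The part of your argument that locates the threshold is identical to the paper's: expand $\det W_0(\lambda)$ as $\lambda\to 0^+$, note that the coefficient of the leading $\lambda^{-4s}$ term is a nonzero multiple of $a^{n}-1+\gamma n(a^{n-1}+1)$, which changes sign exactly at $F(a)=\gamma n$ with $F(a)=(1-a^n)/(1+a^{n-1})$, use that $F$ is strictly decreasing with $F(0^+)=1$, $F(1^-)=0$ to get a unique $a_*\in(0,1)$ whenever $\gamma n<1$, and specialize to $a_*=1-2\gamma$ for $n=2$.

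Where you go beyond the paper is in trying to promote this local sign flip into a global statement about the fundamental tone, and you have correctly identified the two missing pieces. First, the reduction of the competition to $\min\{\mu^{(0)},\mu^{(1)}\}$ rests on the monotonicity in $\ell$ of the Rayleigh quotient evaluated at the same radial profile; the paper states this as ``immediate'' in Theorem~\ref{thm:fundamental-mode}, but the integration by parts you sketch does not obviously produce a quantity that is monotone in $c=\ell(\ell+n-2)$ for all admissible $R$: after the first integration by parts a term proportional to $\int r^{n-4}R'R$ survives, and a further integration by parts introduces boundary contributions on both components of $\partial\Omega_a$ as well as a sign that depends on $n-4$; one should either follow Chasman's original treatment more closely or find another route, and this subtlety is if anything sharper in the annulus than in the ball. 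Second---and you flag this yourself---the $\lambda\to 0$ expansion controls only a neighbourhood of the origin, so to conclude that $\det W_0$ has (resp.\ has no) zero in $(0,\mu^{(1)})$ one must exclude additional sign changes and determine the sign of $\det W_0$ just below $\mu^{(1)}$ uniformly in $a$; neither the paper nor your proposal carries out that analysis. In short, your outline is an accurate and honest description of what a complete proof would require, and it correctly reproduces the paper's computation of $a_*$, but the global control of $\det W_0$ is a genuine gap that the paper sidesteps by appealing to the figures.
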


We remark that a similar phenomena can be  observed in the calculation of the lowest non-zero Steklov eigenvalue in a 2-dimensional annulus from \cite{Fraser-Schoen}. Indeed, there is a critical size that, above which, the eigenfunction is radially symmetric, while below this critical value, the eigenfunction corresponds to the mode $\ell=1$. Moreover, this bifurcation is present in some four-dimensional  problems for the Paneitz operator and its associated third-order boundary operator \cite{Gonzalez-Saez} in the context of conformal geometry and classification of locally conformally flat manifolds.

\subsection{The punctured ball}

While for $0<a<1$ eigenfunctions and eigenvalues on the annulus $\Omega_a$ can be treated as we saw in the previous section, the case of the punctured ball $\Omega_0=\mathbb{B}_1(\mathbb{R}^n)\setminus\{0\}$ has to be handled in a different way. We remark that this case is interesting by its own both as a limiting case for $a\to 0$, and as a pathological case since the boundary of $\Omega_0$ is no longer smooth. In particular, for this reason problem \eqref{problem-annulus} on $\Omega_0$ has to be considered in the sense of the weak formulation \eqref{eq:variationeq} (see also \cite{buosoparini} and the references therein for similar considerations). 

First of all we observe that, thanks to the Removable Singularities Theorem (see e.g., \cite[Section 1.2.5]{mazpob}) we have the identification
$$
H^2(\Omega_0)=H^2(\mathbb{B}_1(\mathbb{R}^n)),
$$
in the sense that the standard embedding $H^2(\mathbb{B}_1(\mathbb{R}^n))\hookrightarrow H^2(\Omega_0)$ is surjective (see also \cite[Section 2.6]{ziemer} for more information on removable singularities for Sobolev spaces). Nevertheless, in order to properly identify the space $H^2_*(\Omega_0)$, we must analyze the behavior of traces of $H^2$-functions on $\partial\Omega_0$. More specifically we need to understand what it means to trace $\nabla u$ at the origin. However, we observe that $\nabla u\in H^1(\mathbb B_1)$ (by the Removable Singularities Theorem), hence it is not possible to trace the gradient on the origin for any $n\ge 2$ (cf.\ \cite[Section 5.2]{burenkov}). It is still possible though to trace the gradient on the smooth part of $\partial\Omega_0$, namely $\partial\mathbb B_1$. Combining these observations with the definition \eqref{h2star} we deduce that
 $$
 H^2_*(\Omega_0)=H^2_*(\mathbb{B}_1).
 $$

Finally, we observe that on the smooth part of $\partial\Omega_0$, namely $\partial\mathbb B_1$, the boundary conditions look as expected:
$$
\partial_\nu u = 0, \quad \text{ and } \quad -\partial_\nu(\Delta u) = \gamma \lambda^4 u \quad \text{ on } \partial\mathbb B_1.
$$

All these considerations can be summed up in the following

\begin{teor}
For any $n\ge 2$, the eigenvalues and the eigenfunctions of problem \eqref{problem-annulus} on the punctured ball $\Omega_0$ coincide with those of problem 
\eqref{eq: eigenvalue problem} on the ball $\mathbb B_1(\mathbb R^n)$. In particular, the eigenfunctions on the punctured ball can be analytically continued to the whole ball. 
\end{teor}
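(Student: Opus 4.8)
The plan is to reduce the eigenvalue problem on the punctured ball $\Omega_0$ to the one on the full ball $\mathbb B_1$ by comparing function spaces and weak formulations. The key point, already established in the preceding discussion, is the chain of identifications $H^2(\Omega_0)=H^2(\mathbb B_1)$ (via the Removable Singularities Theorem) and, crucially, $H^2_*(\Omega_0)=H^2_*(\mathbb B_1)$, since one cannot trace the gradient at the isolated point $\{0\}$ when $n\ge 2$, so the only Neumann condition that survives is the one on the smooth part $\partial\mathbb B_1$. Similarly, the $L^2$-coupling space satisfies $L^2_\gamma(\Omega_0,\partial\Omega_0)=L^2_\gamma(\mathbb B_1,\partial\mathbb B_1)$ because the boundary measure of $\partial\Omega_0$ is carried entirely by $\partial\mathbb B_1$ (the origin has zero $(n-1)$-dimensional measure). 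Hence the ambient Hilbert triples coincide, and the quadratic form $\mathcal Q(u,v)=\int (\Delta u)(\Delta v)$ is literally the same integral in both cases, the origin being a set of measure zero in $\Omega_0$.

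The proof then proceeds as follows. First, I record that $H^2_*(\Omega_0)=H^2_*(\mathbb B_1)$ and $L^2_\gamma(\Omega_0,\partial\Omega_0)=L^2_\gamma(\mathbb B_1,\partial\mathbb B_1)$ as Hilbert spaces with identical inner products. Second, I observe that the weak formulation \eqref{eq:variationeq} — equivalently the eigenvalue relation
\begin{equation*}
\mathcal Q(u,v)=\lambda^4\Big(\int_{\Omega_0} uv+\gamma\int_{\partial\Omega_0}uv\Big)\qquad\forall v\in H^2_*(\Omega_0)
\end{equation*}
— is, term by term, exactly the weak formulation of problem \eqref{eq: eigenvalue problem} on $\mathbb B_1$, since every integral over $\Omega_0$ equals the corresponding integral over $\mathbb B_1$, and $\int_{\partial\Omega_0}uv=\int_{\partial\mathbb B_1}uv$. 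Therefore $(\lambda^4,u)$ is an eigenpair of the problem on $\Omega_0$ if and only if it is an eigenpair of the problem on $\mathbb B_1$. Third, because each eigenfunction on $\mathbb B_1$ is smooth up to the closed ball (as noted in Section \ref{section:setup} via the regularity estimates \eqref{estimate1}--\eqref{estimate2}), its restriction to $\Omega_0$ is the analytic continuation referred to in the statement; conversely any eigenfunction on $\Omega_0$, being an element of $H^2_*(\mathbb B_1)$ solving the same equation, extends. This yields the coincidence of eigenvalues and eigenfunctions, including multiplicities.

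The main obstacle, and the only genuinely non-routine point, is the space identification $H^2_*(\Omega_0)=H^2_*(\mathbb B_1)$: one must be careful that the Neumann condition $\partial_\nu u=0$ on $\partial\Omega_0$ genuinely imposes nothing at the origin, which is precisely the content of the trace discussion preceding the theorem — $\nabla u\in H^1(\mathbb B_1)$ and $H^1$ functions have no trace at a point in dimension $n\ge 2$ (cf.\ \cite[Section 5.2]{burenkov}). Once this is granted, everything else is bookkeeping: measure-zero sets do not affect integrals, so the forms and the test-function spaces match verbatim, and the abstract spectral theorem applied to the identical self-adjoint operator gives the result. I would also add the one-line remark that this contrasts with Dirichlet or buckling Bilaplacian problems, where the more restrictive boundary space $H^2_0$ does see the puncture in low dimensions, explaining why there the punctured ball differs from the ball for $n=2,3$ (see \cite{buosoparini, coffman1, coffman2}).
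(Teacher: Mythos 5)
Your proposal is correct and follows essentially the same route as the paper: both arguments hinge on the Removable Singularities Theorem to identify $H^2(\Omega_0)$ with $H^2(\mathbb B_1)$, the impossibility of tracing $\nabla u\in H^1(\mathbb B_1)$ at a single point for $n\ge 2$ to obtain $H^2_*(\Omega_0)=H^2_*(\mathbb B_1)$, and the observation that the bilinear form and boundary data then agree verbatim. Your write-up is in fact a bit more explicit than the paper's (which presents these facts as a discussion preceding the theorem rather than a boxed proof), in particular by spelling out the identification of the $L^2_\gamma$ spaces and the term-by-term match of the weak formulations, but no new idea is needed or used.
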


Let us conclude by observing that, in other types of eigenvalue problems for the Bilaplacian, this behaviour on the punctured ball may or may not appear depending on the boundary conditions: we refer for instance to \cite{buosoparini, coffman1, coffman2} where the case $n=2$ is considered and the eigenfunctions and the eigenvalues on the punctured disk are different from those of the unit disk. In fact, while it is not possible to trace the gradient of an $H^2$-function on a singleton, it is still possible to trace the function itself, so that
$$
H^2_0(\Omega_0)\subsetneqq H^2_0(\mathbb B_1(\mathbb R^n))
$$
for $n=2,3$, showing that the Dirichlet Bilaplacian presents a different behaviour in the disk and in the punctured disk.

\bigskip

\textbf{Acknowledgements:} D.\ Buoso is a member of the Gruppo Nazionale per l'Analisi
Matematica, la Probabilit\`a e le loro Applicazioni (GNAMPA) of the Istituto Naziona\-le di Alta Matematica (INdAM). This work was initiated while C. Falc\'o was participating in the program ``Introducci\'on a la Investigaci\'on Severo Ochoa" grant at the Instituto de Ciencias Matem\'aticas (ICMAT).
M.d.M. Gonz\'alez  acknowledges financial support from the Spanish  Government, grant numbers MTM2017-85757-P, PID2020-113596GB-I00; additionally, Grant RED2018-102650-T funded by MCIN/AEI/10.13039/501100011033, and the ``Severo Ochoa Programme for Centers of Excellence in R\&D'' (CEX2019-000904-S). M. Miranda completed this work thanks to the contract ``Ayuda extraordinaria a Centros de Excelencia Severo Ochoa" at the Instituto de Ciencias Matem\'aticas (ICMAT).

\bigskip

\begin{thebibliography}{9}


\bibitem{Stegun}
M. Abramowitz and I. A. Stegun, 
\newblock \emph{ Handbook of Mathematical Functions: With Formulas, Graphs, and Mathematical Tables}, 
\newblock US Department of Commerce,(National Bureau of Standards Applied Mathematics series 55, 1965) (1965).

\bibitem{adolfsson}
V. Adolfsson, 
\newblock \emph{$L^2$-integrability of second-order derivatives for Poisson's equation in nonsmooth}
domains,
\newblock Math. Scand. 70 (1992), 146--160.

\bibitem{ADN1} S. Agmon, A. Douglis and L. Nirenberg,
\newblock \emph{Estimates near the boundary for solutions of elliptic partial differential equations satisfying general boundary conditions. I},
\newblock Comm. Pure Appl. Math. 12 (1959), 623--727.

\bibitem{ADN2} S. Agmon, A. Douglis and L. Nirenberg,
\newblock \emph{Estimates near the boundary for solutions of elliptic partial differential equations satisfying general boundary conditions. II},
\newblock Comm. Pure Appl. Math. 17 (1964), 35--92.

\bibitem{Almansi1} E. Almansi,
\newblock \emph{Sull' integrazione dell' equazione differenziale $\Delta^{2}\Delta^{2}u = 0$},
\newblock Rendiconti (Reale Accademia dei Lincei. Classe de scienze fisiche, matematiche e naturale) (1899) vol III, 104--107.

\bibitem{Almansi2} E. Almansi, 
\newblock \emph{Sull’ integrazione dell’ equazione differenziale $\Delta^{2n} = 0$},
\newblock Annali di matematica, serie III, t. II (1899) 1--59.   

\bibitem{bogosel}
B. Bogosel,
\newblock \emph{The Steklov spectrum on moving domains},
\newblock Appl. Math. Optim. 75 (2017), no. 1, 1--25.

\bibitem{Bucur-Ferrero-Gazzola}
D. Bucur, A. Ferrero and F. Gazzola,
\newblock \emph{On the first eigenvalue of a fourth order Steklov problem},
\newblock Calc. Var. Partial Differential Equations 35 (2009), no. 1, 103--131.


\bibitem{Bucur-Gazzola}
D.~Bucur and F.~Gazzola,
\newblock \emph{The first biharmonic {S}teklov eigenvalue: positivity preserving and
  shape optimization},
\newblock Milan J. Math., 79(1) (2011), 247--258.

 \bibitem{Buoso}
D. Buoso,
\newblock \emph{Analyticity and criticality results for the eigenvalues of the biharmonic operator}, 
\newblock Geometric properties for parabolic and elliptic PDE's, 65--85, Springer Proc. Math. Stat., 176, Springer, [Cham], 2016.

 \bibitem{buosokennedy}
D. Buoso and J. Kennedy,
\newblock \emph{The Bilaplacian with Robin boundary conditions}, 
\newblock SIAM J. Math. Anal., 2021, to appear.


\bibitem{Buoso-Chasman-Provenzano} D. Buoso, L.M. Chasman and L. Provenzano,
\newblock \emph{On the stability of some isoperimetric inequalities for the fundamental tones of free plates},
\newblock J. Spectr. Theory 8 (2018), no. 3, 843--869.

\bibitem{buosoparini}
D. Buoso and E. Parini,
\newblock \emph{The buckling eigenvalue problem in the annulus},
\newblock {\em Commun. Contemp. Math.} 23 (2021), no. 4, Paper No. 2050044, 19 pp.

\bibitem{BuosoProvenzano}
D. Buoso and L. Provenzano,
\newblock \emph{A few shape optimization results for a biharmonic Steklov problem},
\newblock J. Differential Equations 259 (2015), no. 5, 1778--1818.

\bibitem{burenkov}
V.I.\ Burenkov,
\newblock \emph{Sobolev spaces on domains},
\newblock Teubner-Texte zur Mathematik, 137. B. G. Teubner Verlagsgesellschaft mbH, Stuttgart, 1998.

\bibitem{bll}
V.I.  Burenkov, P.D. Lamberti and M. Lanza de Cristoforis,
\newblock \emph{Spectral stability of nonnegative selfadjoint operators},
\newblock (Russian) Sovrem. Mat. Fundam. Napravl. 15 (2006), 76--111; translation in  J. Math. Sci. (N.Y.) 149 (2008), no. 4, 1417--1452.


 \bibitem{Caffarelli-Silvestre} L. Caffarelli and L. Silvestre,
 \newblock \emph{An extension problem related to the fractional Laplacian},
 \newblock Comm. Partial Differential Equations 32 (2007), no. 7--9, 1245--1260.
 
\bibitem{Case}
J.~S. Case,
\newblock \emph{Boundary operators associated with the {P}aneitz operator},
\newblock Indiana Univ. Math. J., 67(1):293--327, 2018.



\bibitem{Chang-Qing:zeta1}
S.-Y.~A. Chang and J.~Qing,
\newblock \emph{The zeta functional determinants on manifolds with boundary. {I}.
  {T}he formula},
\newblock J. Funct. Anal., 147(2):327--362, 1997.

\bibitem{Chang-Qing:zeta2}
S.-Y.~A. Chang and J.~Qing,
\newblock \emph{The zeta functional determinants on manifolds with boundary. {II}.
  {E}xtremal metrics and compactness of isospectral set},
\newblock J. Funct. Anal., 147(2):363--399, 1997.

\bibitem{Chang-Ray} A. Chang and R. Yang,
\newblock \emph{On a class of non-local operators in conformal geometry}, 
\newblock Chin. Ann. Math. Ser. B 38 (2017), no. 1, 215--234.

\bibitem{Chasman:isoperimetric}
L. M. Chasman,
\newblock \emph{An isoperimetric inequality for fundamental tones of free plates},
\newblock Comm. Math. Phys. 303 (2011), no. 2, 421--449.

\bibitem{Chasman}
L.M. Chasman, 
\newblock \emph{Vibrational modes of circular free plates under tension},
\newblock Appl. Anal. 90 (2011), no. 12, 1877--1895.



\bibitem{coffman1}
C. V. Coffman and R. J. Duffin,
\newblock \emph{On the fundamental eigenfunctions of a clamped punctured disk},
\newblock Adv. in Appl. Math. 13(2) (1992) 142--151.

\bibitem{coffman2}
C. V. Coffman, R. J. Duffin and D. H. Shaffer,
\newblock \emph{The fundamental mode of vibration of a clamped annular plate is not of one sign},
\newblock Constructive approaches to mathematical models (Proc. Conf. in honor of R. J. Duffin, Pittsburgh, Pa., 1978), pp. 267--277, Academic Press, New York-London-Toronto, Ont., 1979.

\bibitem{courant-hilbert}
R. Courant and D. Hilbert,
\newblock \emph{Methods of mathematical physics.} Vol. I,
\newblock Interscience Publishers, Inc., New York, N.Y., 1953.



\bibitem{Ferrero-Gazzola-Weth} A. Ferrero, F. Gazzola and T. Weth,
\newblock \emph{On a fourth order Steklov eigenvalue problem}, 
\newblock Analysis (Munich) 25 (2005), no. 4, 315--332.

\bibitem{fichera}
G. Fichera,
\newblock \emph{Su un principio di dualit\`{a} per talune formole di maggiorazione relative alle
equazioni differenziali},
\newblock Atti Accad. Naz. Lincei (8) 19 (1955), 411--418.

\bibitem{Fraser-Schoen}
A.~Fraser and R.~Schoen,
\newblock \emph{The first {S}teklov eigenvalue, conformal geometry, and minimal
  surfaces},
\newblock Adv. Math. 226(5) (2011), 4011--4030.

\bibitem{Francois}
G.~François,
\newblock \emph{Spectral asymptotics stemming from parabolic equations under dynamical boundary conditions},
\newblock Asymptot. Anal., 46 (2006), no. 1, 43--52.


\bibitem{Garcke-Knopf}
H. Garcke and P. Knopf, 
\newblock \emph{Weak solutions of the Cahn-Hilliard system with dynamic boundary conditions: A
gradient flow approach}, 
\newblock SIAM J. Math. Anal. 52 (2020), no. 1, 340--369.

\bibitem{GGS}
F. Gazzola, H. C. Grunau and G. Sweers,
\newblock \emph{Polyharmonic boundary value problems: positivity preserving and nonlinear higher order elliptic equations in bounded domains},
\newblock Springer Science \& Business Media, 2010.

\bibitem{Gazzola-Sweers}
F.~Gazzola and G.~Sweers,
\newblock \emph{On positivity for the biharmonic operator under {S}teklov boundary
  conditions},
\newblock Arch. Ration. Mech. Anal., 188(3) (2008),399--427.

\bibitem{GHL}
A. Girouard, A. Henrot and J. Lagacé,
\newblock \emph{From Steklov to Neumann via homogenisation},
\newblock Arch. Ration. Mech. Anal. 239 (2021), no. 2, 981--1023.


\bibitem{Goldstein-Miranville-Schimperna}
G.R. Goldstein, A. Miranville and G. Schimperna,
\newblock \emph{A Cahn-Hilliard model in a domain with non permeable walls},
\newblock Physica D: Nonlinear Phenomena 240.8 (2011), 754--766.


\bibitem{Gonzalez-Saez}
M. Gonz\'alez and M. S\'aez,
\newblock \emph{Eigenvalue bounds for the Paneitz operator and its associated third-order boundary operator on locally conformally flat manifolds},
\newblock Preprint 2021.

\bibitem{hale}
J. K. Hale,
\newblock \emph{Eigenvalues and perturbed domains},
\newblock Ten mathematical essays on approximation in analysis and topology, 95--123,  Elsevier B. V., Amsterdam, 2005.

\bibitem{jerken}
 D. S. Jerison and  C. E. Kenig,
 \newblock \emph{The Neumann problem on Lipschitz domains},
 \newblock Bull. Amer. Math. Soc. (N.S.) 4 (1981), no. 2, 203–207.





\bibitem{Knopf-Liu}
P.~Knopf and C.~Liu,
\newblock \emph{On second-order and fourth-order elliptic systems consisting of bulk
  and surface PDEs: Well-posedness, regularity theory and eigenvalue problems},
\newblock Interfaces and Free Boundaries 23(4) (2021), 507--533.


\bibitem{Knopf-Signori}
P. Knopf and A. Signori,
\newblock \emph{On the nonlocal Cahn-Hilliard equation with nonlocal dynamic boundary condition and boundary penalization},
\newblock J. Differential Equations 280 (2021), 236--291.

\bibitem{Knopf-Lam-Liu-Metzger}
P.~Knopf, K.~F. Lam, C.~Liu and S.~Metzger,
\newblock \emph{Phase-field dynamics with transfer of materials: the Cahn-Hillard equation with reaction rate dependent dynamic boundary conditions}, 
\newblock ESAIM Math. Model. Numer. Anal. 55 (2021), no. 1, 229--282.


\bibitem{KuttlerSigillito}
J. R. Kuttler and V. G. Sigillito,
\newblock \emph{Inequalities for membrane and Stekloff eigenvalues},
\newblock J. Math. Anal. Appl. 23 (1968), 148--160. 

\bibitem{LambertiProvenzano}
P. D. Lamberti and L. Provenzano,
\newblock \emph{On the explicit representation of the trace space $H^\frac{3}{2}$ and of the solutions to biharmonic Dirichlet problems on Lipschitz domains via multi-parameter Steklov problems},
\newblock Revista Matem\'atica Complutense, 2021.

\bibitem{Liu}
G. Liu,
\newblock \emph{On asymptotic properties of biharmonic Steklov eigenvalues},
\newblock J. Differential Equations 261 (2016), no. 9, 4729--4757.


\bibitem{Liu-Wu:Cahn-Hilliard}
C.~Liu and H.~Wu,
\newblock \emph{An energetic variational approach for the {C}ahn-{H}illiard equation
  with dynamic boundary condition: model derivation and mathematical analysis},
\newblock Arch. Ration. Mech. Anal., 233(1) (2019), 167--247.

\bibitem{mazpob}
V.G.~Maz'ya, S.V.~Poborchi, 
\newblock\emph{Differentiable functions on bad domains}, 
\newblock World Scientific Publishing Co., Inc., River Edge, NJ, 1997.

\bibitem{mitrea}
I. Mitrea and M. Mitrea,
\newblock \emph{Multi-layer potentials and boundary problems for higher-order elliptic systems in Lipschitz domains},
\newblock Lecture Notes in Mathematics 2063. Springer, Heidelberg, 2013.


\bibitem{olver2010nist}
F. Olver, D. Lozier, R. Boisvert and  C. Clark,
  editors,
\newblock \emph{N{IST} handbook of mathematical functions},
\newblock U.S. Department of Commerce, National Institute of Standards and
  Technology, Washington, DC; Cambridge University Press, Cambridge, 2010.






\bibitem{vonBelow-Francois} J. von Below and G. François,
\newblock \emph{Spectral asymptotics for the Laplacian under an eigenvalue dependent boundary condition}, 
\newblock Bull. Belg. Math. Soc. Simon Stevin 12 (2005), no. 4, 505--519.

\bibitem{Xia-Wang} C. Xia and Q. Wang,
\newblock \emph{Eigenvalues of the Wentzell-Laplace operator and of the fourth order Steklov problems},
\newblock J. Differential Equations 264 (2018), no. 10, 6486--6506.

\bibitem{ziemer} 
W.P.\ Ziemer,
\newblock \emph{Weakly differentiable functions. Sobolev spaces and functions of bounded variation.}, \newblock Graduate Texts in Mathematics, 120. Springer-Verlag, New York, 1989.

\end{thebibliography}
\end{document}